\documentclass{article}
\usepackage[utf8]{inputenc}

\usepackage{geometry}  
\geometry{hmargin=3.5cm,vmargin=4cm}
\usepackage{amssymb}
\usepackage{amsmath}
\usepackage{amsthm }
\usepackage{enumerate}
\usepackage{graphicx, wrapfig}
\usepackage{fullpage}
\usepackage{caption}
\usepackage{subcaption}
\usepackage{lipsum}
\usepackage{algorithm}
\usepackage{algorithmic}
\usepackage[hidelinks]{hyperref}
\usepackage{multicol}
\usepackage{xcolor}
\usepackage{dsfont}
\usepackage{bbm}
\usepackage{comment}

\title{Long-time behaviour of an advection-selection equation
% When are biological switches at most bistable?
}

\author{Jules Guilberteau%<-this % stops a space
\thanks{Sorbonne Université, CNRS, Université Paris Cité, Inria, Laboratoire Jacques-Louis Lions (LJLL), F-75005 Paris, France. {\tt\small jules.guilberteau@sorbonne-universite.fr}}, 
Camille Pouchol% <-this % stops a space
\thanks{Universit\'e Paris Cité, FP2M, CNRS FR 2036, MAP5 UMR 8145, F-75006 Paris,
France. 
         {\tt\small camille.pouchol@u-paris.fr}}
\ and 
Nastassia Pouradier Duteil% <-this % stops a space
\thanks{ 
       Sorbonne Université, Inria, Université Paris Cité, CNRS, Laboratoire Jacques-Louis Lions (LJLL), F-75005 Paris, France  {\tt\small nastassia.pouradier\_duteil@sorbonne-universite.fr}}
}

\date{}

\newtheorem{theorem}{Theorem}
\newtheorem{prop}[theorem]{Proposition}
\newtheorem{cor}[theorem]{Corollary}

\newtheorem{lem}{Lemma}

\theoremstyle{definition}
\newtheorem{definition}{Definition}

\theoremstyle{definition}
\newtheorem*{rem}{Remark} 

\newtheorem*{notation}{Notation}

\newcommand{\R}{\mathbb{R}}
\newcommand{\N}{\mathbb{N}}

\newcommand{\bn}{\bar n}
\newcommand{\e}{\varepsilon}
\newcommand{\tr}{\tilde{r}}

\newcommand{\ind}{\mathbbm{1}}

\newcommand{\dive}{\nabla \cdot }
\newcommand{\E}{\mathcal{E}}

\newcommand{\ui}{\underset{t\to +\infty}}
\newcommand{\ul}{\underset{t\to +\infty}{\longrightarrow}}

\newcommand{\co}{\mathcal{C}}

\renewcommand{\O}{\mathcal{O}}
\newcommand{\U}{\mathcal{U}}
\newcommand{\supp}{\mathrm{supp}}

\renewcommand{\bar}{\overline}

\setlength{\intextsep}{0pt} 
\setlength{\textfloatsep}{0pt}

\begin{document}

\maketitle

%\tableofcontents

%\newpage

\begin{abstract}

We study the long-time behaviour of the advection-selection equation 
$$\partial_tn(t,x)+\nabla \cdot \left(f(x)n(t,x)\right)=\left(r(x)-\rho(t)\right)n(t,x),\quad \rho(t)=\int_{\R^d}{n(t,x)dx}\quad t\geq 0, \;  x\in \R^d,$$
with an initial condition $n(0, \cdot)=n^0$. 
In the field of adaptive dynamics, this equation typically describes the evolution of a phenotype-structured population over time. In this case, $x\mapsto n(t,x)$ represents the density of the population characterised by a phenotypic trait $x$, the advection term `$\nabla \cdot \left(f(x)n(t,x)\right)$' a cell differentiation phenomenon driving the individuals toward specific regions, and the selection term `$\left(r(x)-\rho(t)\right)n(t,x)$' the growth of the population, which is of logistic type through the total population size $\rho(t)=\int_{\R^d}{n(t,x)dx}$. 

%The phenomena of advection and growth have antagonistic effects. Indeed, it is well-known that the advection term drives the solution to the asymptotically stable equilibria of the corresponding ODE, whereas the growth term pushes it towards the regions where the growth function is maximised.
In the one-dimensional case $x\in \R$, we prove that the solution to this equation can either converge to a weighted Dirac mass or to a function in $L^1$.  Depending on the parameters $n^0$, $f$ and $r$, we determine which of these two regimes of convergence occurs, and we specify the weight and the point where the Dirac mass is supported, or the expression of the $L^1$-function which is reached. 

\end{abstract}

\section{Introduction}

\subsection{Advection-selection equation}

We consider the asymptotic behaviour of the advection-selection equation
\begin{align}
\begin{cases}
\partial_tn(t,x)+\nabla \cdot \left(f(x)n(t,x)\right)=\left(r(x)-\rho(t)\right)n(t,x),\quad  t\geq 0, \;  x\in \R^d\\
\rho(t)=\int_{\R^d}{n(t,x)dx},\quad  t\geq 0\\
n(0, x)=n^0(x),\quad  x\in \R^d.
\end{cases}
\label{eq intro}
\end{align}
This type of model typically comes up in the field of \textit{adaptive dynamics}. The aim is to understand how, among heterogeneous populations of individuals structured by a so-called continuous \textit{trait} or \textit{phenotype} $x$, the distribution of the density $x\mapsto n(t,x)$ evolves over time, and which phenotypes prevail in large times $t \rightarrow +\infty$.

In the model above~\eqref{eq intro}, the partial differential equation (PDE) takes into account 
\begin{itemize}
\item advection via the term $\nabla \cdot \left(f(x)n(t,x)\right)$, whereby individuals follow the flow associated with $f$,
\item growth via the term $(r(x)-\rho(t))n(t,x)$, which is of logistic type through the total population size $\rho(t) = \int_{\R^d} n(t,x) \,dx$.
\end{itemize}
%
%which typically models a phenotype-structured population of cells, and where the advection term `$\nabla \cdot \left(f(x)n(t,x)\right)$', represents a cell-differentiation phenomenon, and the term `$(r(x)-\rho(t))n(t,x)$' models selection and growth of the population. 

The literature concerning so-called phenotype-structured partial differential equations for adaptive dynamics is abundant~\cite{alfaro2017effect, bouin2012invasion, barles2009concentration, calsina2013asymptotics, calsina2005stationary,   chisholm2016effects, coville2013convergence, diekmann2005dynamics,  lorenzi2022invasion, lorz2013populational, perthame2006transport, perthame2008dirac}. These models usually take into account selection, which favors individuals with the most adapted traits in terms of growth, and mutations, which induce a slight phenotypic change upon reproduction. Mutation is often assumed to be rare and small compared to selection, \cite{dieckmann1996dynamical, geritz1998evolutionarily, metz1995adaptive}.  Models with no mutation at all have also been the subject of several studies~\cite{ almeida2019evolution, desvillettes2008selection, gyllenberg2005impossibility, jabin2011selection, lorenzi2020asymptotic, pouchol2018global}.

One way to analyse how the population adapts is to study the long-time behaviour for solutions of such PDE models. In particular, determining if the population becomes monomorphic (\textit{i.e.} the solution concentrates around a certain trait, called \textit{Evolutionary
Stable Strategy} (ESS) \cite{hines1987evolutionary}), or if phenotypic diversity is preserved is a fundamental question when studying such models. Broadly speaking, it has been shown that selection leads to concentration (around a finite number of phenotypic traits), while mutations, on the contrary, tend to regularise solutions, and, possibly, their limits \cite{bonnefon2015concentration, gyllenberg2005impossibility}. 
%Perhaps surprisingly, the studied model~\eqref{eq intro} will feature convergence to smooth functions even in the absence of terms modelling mutations. 

However, less emphasis has been put on studying the effect of advection, except for the recent few examples~\cite{chisholm2016effects, lorenzi2015dissecting, chisholm2015emergence} where most results are of numerical nature, or assume a very specific form of the functions $r$ and $f$. 

Yet, considering advection is relevant in various contexts. From the phenomenological point of view, it may represent how the environment drives the individuals towards specific regions, as opposed to more random mutations.  
It is also the rigorous way to model phenotype changes that are intrinsic to the individual, mediated by an ordinary differential equation (ODE) of the form
\begin{align}
 \dot x(t)=f(x(t)),
\label{ode advection}
\end{align}
where $x(t)\in \R^d$ denotes the phenotypic trait of the individual at time $t\geq 0$. As is well known, the PDE for the density of individuals corresponding to the sole model~\eqref{ode advection} is indeed the advection equation $\partial_tn(t,x)+\nabla \cdot \left(f(x)n(t,x)\right)=0$.
Our original motivation is that of cell differentiation, for which very refined ODE models have been developed in systems biology (see for instance~\cite{guantes2008multistable, thomas2002laws, tyson2020dynamical, zhang2014tgf}). 

The goal of the present article is to investigate the combined effect of selection and advection, assuming that mutations are absent or sufficiently small to be neglected. 
We hence study the long-time behaviour of the PDE~\eqref{eq intro}, where $n^0$ is the initial population distribution, and $\rho(t)$ is the size of the population at time $t\geq 0$. The equation incorporates advection with the flow $f$ of the corresponding ODE, and selection (or growth) through the non-linear and non-local  term $(r(x)-\rho(t))n(t,x)$. Here, $r(x)-\rho(t)$ can be interpreted as the fitness of individuals with trait $x$ inside the environment created by the total population, where the individuals are in a blind competition with all the other ones, regardless of their phenotype. %We have chosen here to neglect mutations to have a clear understanding of how differentiation and selection interact in the absence of other phenomena. 
We note that such models can rigorously be derived from stochastic individual based-models, in the limit of large populations~\cite{champagnat2006unifying, champagnat2008individual}. 

%{\color{red} Due to the small number of parameters, and since the selection term has a specific form, this equation does not claim to accurately model a cell colony, but rather to better understand the role played by the advection term in this family of models where it is rarely considered. }

%In another hand, many ODE models derived from systems biology allows to study another major phenomenon regarding the phenotypic evolution of cells: cell differentiation. This phenomenon, , has been widely models by ODE of the generic form `

In the absence of differentiation ($f\equiv 0$), the long-time behaviour of this model has been studied in detail by Beno\^it Perthame~\cite{perthame2006transport}, Tommaso Lorenzi and Camille Pouchol~\cite{lorenzi2020asymptotic},  and it has been proved that, in general, solutions typically concentrate onto a single trait. This result is rather intuitive, since this model does not take mutations into account. Solutions of the advection equation alone are also known to converge to weighted Dirac masses located at the roots of $f$ which are asymptotically stable for the ODE \eqref{ode advection}~\cite{diperna1989ordinary}. On the contrary, when considering both selection and advection as in equation~\eqref{eq intro}, the long-time behaviour is not known, to the best of our knowledge. Intuitively, two antagonistic effects will compete:
\begin{itemize}
\item advection will push the solution towards the asymptotically stable equilibria of ODE \eqref{eq intro}. 
\item growth will push the solution towards regions where $r$ is maximised. 
\end{itemize}
%One the one hand, the selection term, as proved in \cite{lorenzi2020asymptotic}, will encourage the solution to concentrate to the points where $r$ reaches its maximum. In the other hand, the advection term prompts the solution to concentrate to the roots of $f$ which are asymptotically stable for  ODE \eqref{ode advection}. Thus, selection and differentiation may have antagonistic effect
When coupling these two phenomena, our aim is to uncover whether  the solution of \eqref{eq intro} converges to a weighted Dirac mass, or if it converges to a smooth function. We show that both phenomena can occur, depending on the parameters $n^0, f$ and $r$. Perhaps surprisingly, the model~\eqref{eq intro} features convergence to smooth functions even in the absence of terms modelling mutations. 

Determining which parameters lead to convergence to a continuous function seems rather intricate in full generality. In particular, this problem cannot be addressed with traditional entropy methods as developed in~\cite{michel2005general}, since in the absence of mutations, there is no decrease of entropy.

\subsection{Main results}

In this paper, we thus develop a different strategy allowing to reduce this problem to the study of parameter-dependent integrals, which is mainly applied to the one-dimensional case ($x\in \R$). In this case, we elucidate the asymptotic behaviour for a large class of parameter values, and we show that there exist many different subcases depending on the number of zeros of the function $f$. A general statement encompassing all our results is hence rather convoluted. In order to illustrate our main results, we here focus on a few example cases which highlight the main two parameter regimes encountered for the asymptotic behaviour of \eqref{eq intro}. 
\begin{prop}
Let us assume that the parameter functions  $f$, $n^0$ and $r$ are smooth enough, that $f$ has a unique root (that we denote $x_s$), and that $f'(x_s)<0$ (which means that $ x_s$ is asymptotically stable for ODE~\eqref{ode advection}). 
Then, $\rho$ converges to $r(x_s)$, and $n$ converges to a weighted Dirac mass at $x_s$, when $t$ goes to $+\infty$.  
\end{prop}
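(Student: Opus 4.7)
\emph{Proof plan.} My strategy is to exploit the method of characteristics, which reduces the problem to the asymptotic analysis of a single parameter-dependent integral. Let $X_t(x_0)$ denote the flow of $\dot y = f(y)$. Because $f$ has a unique zero $x_s$ with $f'(x_s) < 0$, one-dimensionality forces $f > 0$ on $(-\infty, x_s)$ and $f < 0$ on $(x_s, +\infty)$, so $X_t(x_0) \ul x_s$ for every $x_0 \in \R$, with exponential rate in a neighbourhood of $x_s$. Setting $m(t, x_0) := n(t, X_t(x_0))\, J_t(x_0)$ where $J_t(x_0) := \exp\bigl(\int_0^t f'(X_s(x_0))\, ds\bigr)$ is the Jacobian of $X_t$, the PDE rewrites as $\partial_t m = (r(X_t(x_0)) - \rho(t))\, m$, hence
$$m(t, x_0) = n^0(x_0) \exp\!\left(\int_0^t r(X_s(x_0))\, ds\right) e^{-I(t)}, \qquad I(t) := \int_0^t \rho(s)\, ds.$$
The change of variables $x = X_t(x_0)$ gives $\rho(t) = \int_\R m(t, x_0)\, dx_0$. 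Introducing $G(t) := \int_\R n^0(x_0) \exp\bigl(\int_0^t r(X_s(x_0))\, ds\bigr)\, dx_0$ and integrating the scalar ODE $I'(t) = G(t)\, e^{-I(t)}$ then produces the closed expression
$$\rho(t) = \frac{G(t)}{1 + \int_0^t G(s)\, ds}.$$

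The second step is the asymptotic analysis of $G$. Exponential attraction of $X_s(x_0)$ towards $x_s$ makes $s \mapsto r(X_s(x_0)) - r(x_s)$ integrable on $[0, +\infty)$ for every $x_0$, with limit $\alpha(x_0) := \int_0^\infty (r(X_s(x_0)) - r(x_s))\, ds$. Provided enough decay on $n^0$ to apply dominated convergence, this yields
$$e^{-r(x_s) t}\, G(t) \ul C := \int_\R n^0(x_0)\, e^{\alpha(x_0)}\, dx_0.$$
A L'H\^opital-type argument applied to the closed formula for $\rho$ then gives $\rho(t) \ul r(x_s)$, and sharpening the estimate on $e^{I(t)} = 1 + \int_0^t G(s)\, ds$ produces $I(t) - r(x_s)\, t \to \log(C/r(x_s))$.

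Injecting these asymptotics back into the representation of $m$ yields pointwise convergence $m(t, x_0) \ul m_\infty(x_0) := (r(x_s)/C)\, n^0(x_0)\, e^{\alpha(x_0)}$, with $\int_\R m_\infty = r(x_s)$. Since $\|m(t, \cdot)\|_{L^1} = \rho(t) \to \|m_\infty\|_{L^1}$, Scheff\'e's lemma upgrades this to $L^1$ convergence. Testing against any $\varphi \in C_b(\R)$,
$$\int_\R \varphi(x)\, n(t, x)\, dx = \int_\R \varphi(X_t(x_0))\, m(t, x_0)\, dx_0 \ul r(x_s)\, \varphi(x_s),$$
using $X_t(x_0) \ul x_s$ pointwise together with $L^1$ convergence of $m(t, \cdot)$, which gives $n(t, \cdot) \uw r(x_s)\, \delta_{x_s}$.

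The main obstacle I foresee is the uniform-in-$t$ integrable domination needed both for $e^{-r(x_s) t} G(t) \ul C$ and for the application of Scheff\'e's lemma: both require controlling the tail of $x_0 \mapsto n^0(x_0)\, e^{\alpha(x_0)}$, which is where the qualitative ``smooth enough'' assumption must be turned into explicit decay and growth conditions on $n^0$, $r$ and $f$. The borderline case $r(x_s) \leq 0$, in which the limiting density $m_\infty$ degenerates and one should instead expect $\rho(t) \to 0$, is presumably excluded by those same hypotheses.
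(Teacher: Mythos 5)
Your proposal is correct under the paper's standing hypotheses, and the two worries you flag at the end are in fact non-issues there: $n^0$ is assumed to lie in $\mathcal{C}^1_c(\R)$, so the tail of $x_0\mapsto n^0(x_0)e^{\alpha(x_0)}$ is controlled for free (exponential attraction to $x_s$ is uniform on the compact $\supp(n^0)$, which gives the uniform bound needed for dominated convergence and Scheffé), and $r$ is assumed positive, which excludes the degenerate case $r(x_s)\leq 0$. The route, however, is genuinely different from the paper's. The paper partitions $\R$ into the two invariant half-lines $(-\infty,x_s)$ and $(x_s,+\infty)$, proves that the associated carrying capacities $R_i=\dot S_i/S_i$ both converge to $r(x_s)$ with exponential speed (cases (i)--(ii) of Proposition \ref{limit Ri}), feeds this into the comparison machinery of Proposition \ref{properties ODE} to get $\rho\to r(x_s)$, and concludes via shrinkage of the support together with Lemma \ref{convergence Radon}. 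You instead work with the single global quantity $S=S_{\R}$ (your $G$), which the paper explicitly sets aside as ``intricate except in very specific cases'': your observation is that this is one of those cases, because the Bernoulli substitution yields the closed formula $\rho=G/(1+\int_0^{\cdot}G)$, and then only the asymptotics of $G$ itself (namely $e^{-r(x_s)t}G(t)\to C>0$, a single dominated-convergence computation) are needed — no analysis of $\dot G/G$ and no differential inequalities. Your endgame via pushforward densities, Scheffé's lemma and pointwise convergence of the flow is likewise a clean substitute for the paper's support-localisation argument, and as a bonus it identifies the limiting profile $m_\infty$ of the mass distribution in Lagrangian coordinates. What your approach buys is a shorter, self-contained proof of this one proposition; what the paper's buys is a framework that survives when $f$ has several roots, where the global $S$ mixes competing exponential rates and one must track which invariant interval carries the dominant contribution — precisely what the decomposition into the $\O_i$ and Proposition \ref{properties ODE} are designed to do.
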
 
Hence, in the presence of a single asymptotically stable equilibrium point for ODE~\eqref{ode advection}, the solution of PDE~\eqref{eq intro} converges to a  Dirac mass at this point. In other words, the selection term is dominated by the advection term, which determines the point in which the solution concentrates. As soon as $f$ has at least two roots, the situation is much more complex and solutions may converge to $L^1$ functions, as illustrated in Figure \ref{fig 2 regimes} and exposed in the following proposition:

\begin{prop}
Let us assume that the functions $f$, $n^0$ and $r$ are smooth enough,  that $f$ has exactly two roots (that we denote $x_u$ and $x_s$, with $ x_u<x_s$), such that $f'(x_u)>0$ and $f'(x_s)<0$, which means that the points $x_u$ and $x_s$ are respectively asymptotically unstable and asymptotically stable for the ODE~\eqref{ode advection}. Moreover, let us assume that $n^0$ has its support in $[x_u,x_s]$, and that $n^0( x_u)>0$.  Then, the following alternative holds:
\begin{itemize}
\item If $r(x_s)>r(x_u)-f'(x_u)$, $n$ converges to a weighted Dirac mass at $x_s$, and  $\rho$ converges to~$r(x_s)$. 
\item If $r(x_s)<r(x_u)-f'(x_u)$, $n$ converges to a function in $L^1(x_u, x_s)$, and $\rho$ converges to $r(x_u)-f'(x_u)$.  
\end{itemize}
\label{intro two roots}
\end{prop}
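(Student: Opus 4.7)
The plan is to use the method of characteristics to recast the PDE as a parameter-dependent integral equation for $\rho$, and then extract its large-time asymptotics by a Laplace-type analysis in the characteristic variable. Let $\phi_t$ denote the flow of $\dot X = f(X)$ and set $R(t) := \int_0^t \rho(s)\,ds$. Integrating along characteristics yields
$$n(t,\phi_t(x_0))\,\phi_t'(x_0) = n^0(x_0)\exp\left(\int_0^t [r(\phi_s(x_0)) - \rho(s)]\,ds\right),$$
and the substitution $y = \phi_s(x_0)$ (so that $dy = f(y)\,ds$) turns the mass equation into
$$\rho(t) = e^{-R(t)}\,J(t),\qquad J(t) := \int_{\xu}^{x_s} n^0(x_0)\exp\left(\int_{x_0}^{\phi_t(x_0)}\frac{r(y)}{f(y)}\,dy\right)dx_0.$$
Since $f(\xu) = f(x_s) = 0$ and $f > 0$ on $(\xu,x_s)$, the flow preserves $[\xu,x_s]$, fixes $\xu$, and sends every $x_0 \in (\xu,x_s]$ to $x_s$. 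The problem thus reduces to identifying the exponential growth rate $\gamma$ of $J(t)$.

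The ratio $r/f$ has a simple pole at $y=\xu$ with residue $r(\xu)/f'(\xu)$, and linearisation at $\xu$ gives an escape time $T(\e) \sim \frac{1}{f'(\xu)}\log(1/\e)$ for a trajectory starting at $\xu + \e$. I would split $J(t)$ according to whether such a characteristic has escaped a fixed small neighbourhood of $\xu$ by time $t$. On the \emph{escaped} region ($\e \gtrsim e^{-f'(\xu)t}$, so $\phi_t(x_0)$ is close to $x_s$), a direct evaluation of the inner integral shows that the integrand is of order $n^0(\xu)\,\e^{-\alpha}\,e^{r(x_s)t}$ with $\alpha := \frac{r(\xu)-r(x_s)}{f'(\xu)}$. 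On the \emph{stuck} region ($\e \lesssim e^{-f'(\xu)t}$), $\phi_t(x_0)$ remains near $\xu$ and the integrand is of order $n^0(\xu)\,e^{r(\xu)t}$. A direct Laplace-type computation then yields $J(t) \sim C_1\,e^{r(x_s)t}$ when $\alpha < 1$ (Case 1, $r(x_s) > r(\xu)-f'(\xu)$, escaped region dominates), and $J(t) \sim C_2\,e^{(r(\xu)-f'(\xu))t}$ when $\alpha > 1$ (Case 2, stuck region dominates).

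From $\rho = e^{-R}J$ we obtain the identity $\frac{\rho'}{\rho} + \rho = \frac{J'}{J}$; combined with a priori bounds on $\rho$, the estimate $J'/J \to \gamma$ yields $\rho(t) \to \gamma := \max(r(x_s),\,r(\xu)-f'(\xu))$. For $n$ itself, the explicit formula
$$n(t,x) = \frac{f(\phi_{-t}(x))}{f(x)}\,n^0(\phi_{-t}(x))\exp\left(\int_{\phi_{-t}(x)}^x \frac{r(y)}{f(y)}\,dy - R(t)\right)$$
does the job. In Case 1, a pushforward argument shows that the mass sitting outside any neighbourhood of $x_s$ vanishes, so that $n \uw r(x_s)\,\delta_{x_s}$. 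In Case 2, for any fixed $x \in (\xu,x_s)$, $\phi_{-t}(x)\to\xu$ and the factor $(\phi_{-t}(x)-\xu)^{1-r(\xu)/f'(\xu)}$ produced by the pole of $r/f$ cancels exactly against $e^{-R(t)}\sim e^{-(r(\xu)-f'(\xu))t}$, producing a finite explicit limit $n_\infty \in L^1(\xu,x_s)$.

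The main obstacle I anticipate is twofold. First, upgrading the growth rate of $J(t)$ into genuine convergence of $\rho(t)$ (and not merely of the time-average $R(t)/t$) requires a careful dynamical argument, because $J'/J$ is not a priori monotone. Second, in Case 2, one has to check that the candidate limit $n_\infty$ is integrable on $(\xu,x_s)$ near \emph{both} endpoints and that $\int n_\infty = r(\xu)-f'(\xu)$; the behaviour at $x_s$ is delicate, since it involves a non-trivial interplay between the blow-up of $1/f$ at $x_s$ and the decay contributed by the pole at $\xu$.
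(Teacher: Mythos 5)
Your proposal is correct and follows essentially the same route as the paper: your $J(t)$ is exactly the paper's carrying-capacity integral $S(t)$ in the form \eqref{S(t) Y(t,x)}, your identity $\frac{\rho'}{\rho}+\rho=\frac{J'}{J}$ is the paper's non-autonomous logistic equation \eqref{dot rho E}, and your treatment of $n$ via the semi-explicit characteristic formula matches the arguments of Propositions \ref{2 equilibria first} and \ref{2 equilibria second}. The only cosmetic difference is that you extract the growth rate of $J$ by an escaped/stuck Laplace splitting, whereas the paper normalises to $S(t)e^{-lt}$ and bounds it and its derivative via the $\Theta$-estimate of Lemma \ref{alpha f}; both identify the same threshold $r(x_s)=r(x_u)-f'(x_u)$, and the two caveats you flag (passing from the growth rate of $J$ to genuine convergence of $J'/J$, hence of $\rho$, and integrability of the limit profile at $x_s$) are exactly the points the paper settles by proving exponential convergence of the carrying capacity and invoking Lemma \ref{alpha f} near $x_s$.
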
 
This proposition can be interpreted as follows: since $f$ is positive on $(x_u,x_s)$, the advection term drives the solution towards $x_s$. On the other hand, since $x_u$ is an equilibrium, albeit unstable, it acts as a counterweight by controlling the speed of the transition towards $x_s$ in the neighbourhood of $x_u$. Hence, in the case where  $r(x_u)-f'(x_u)$ is large enough ($r(x_u)-f'(x_u)>r(x_s)$), the growth rate around $ x_u$ is large enough to compensate for the advection term, leading to the convergence of  $n$ to a continuous function. In the other case, the advection term is dominant, and $n$ converges to a weighted Dirac mass at $x_s$. If $n^0(x_u)=0$, the toggle value between the two regimes (\textit{i.e.} the convergence to a smooth function or to a Dirac mass) changes, depending on how $n^0$ vanishes at $x_u$, and other limit functions can be reached: the complete result is detailed in Proposition \ref{2 equilibria second}. 
The method of analysis proposed in this article allows in fact to solve this problem for any function $f$ with a finite number of roots, as detailed in Proposition \ref{multi equilibria}. The case where $f$ is equal to zero on a whole interval can also be studied with our method, as highlighted by Proposition \ref{f equiv 0 on a segment}.

\begin{figure}[h]
\centering
\begin{subfigure}{\textwidth}
\includegraphics[width=0.33\textwidth]{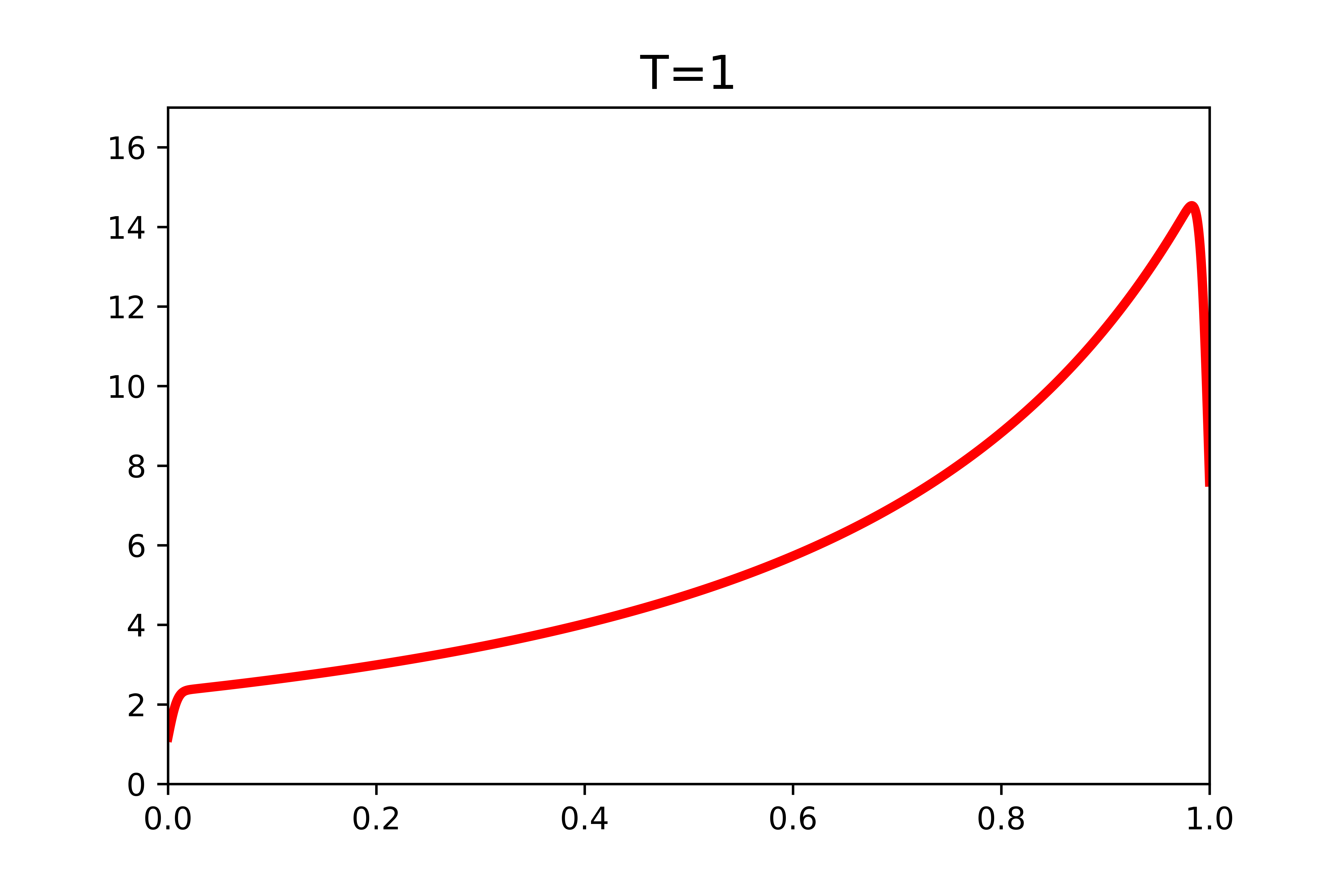}
\includegraphics[width=0.33\textwidth]{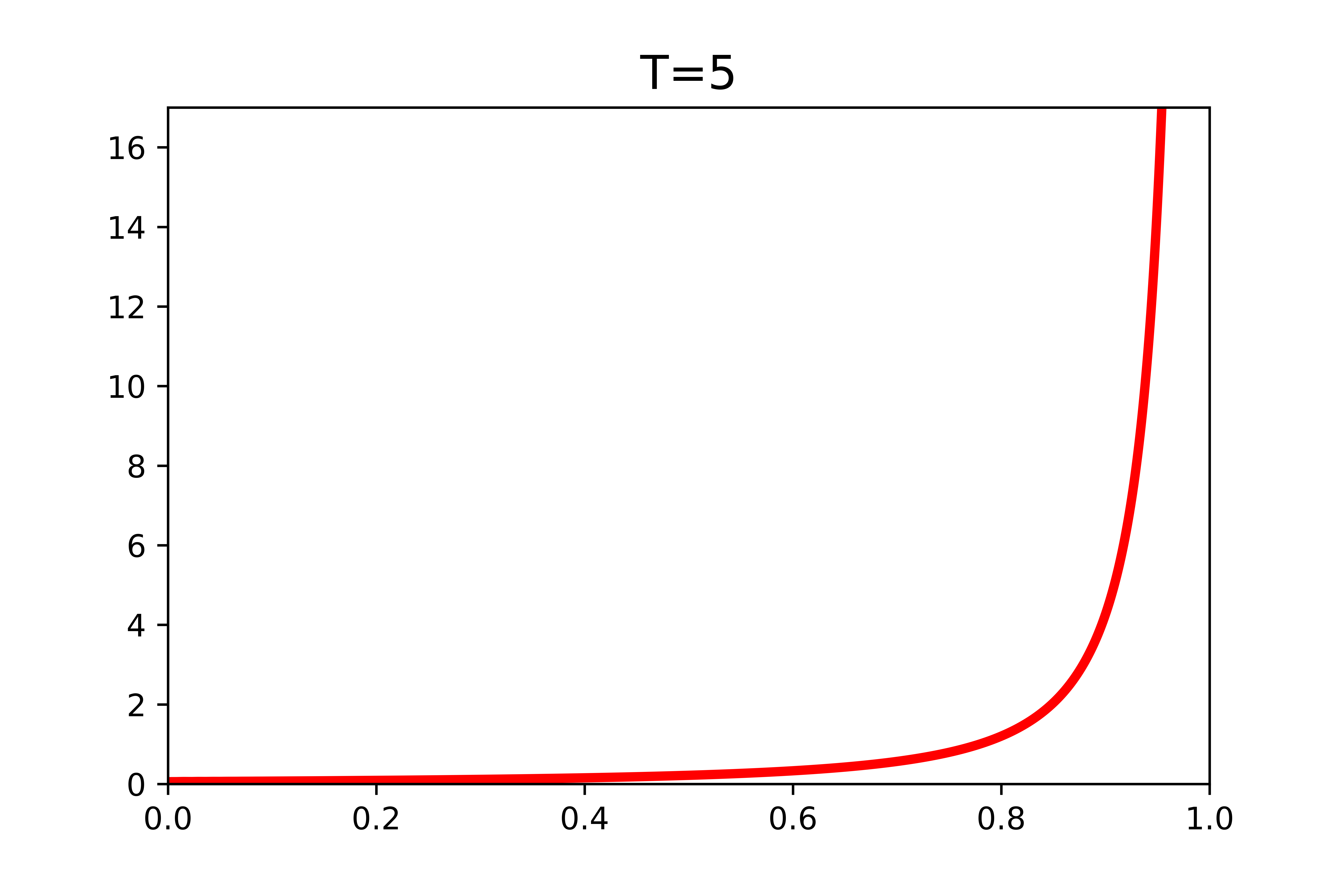}
\includegraphics[width=0.33\textwidth]{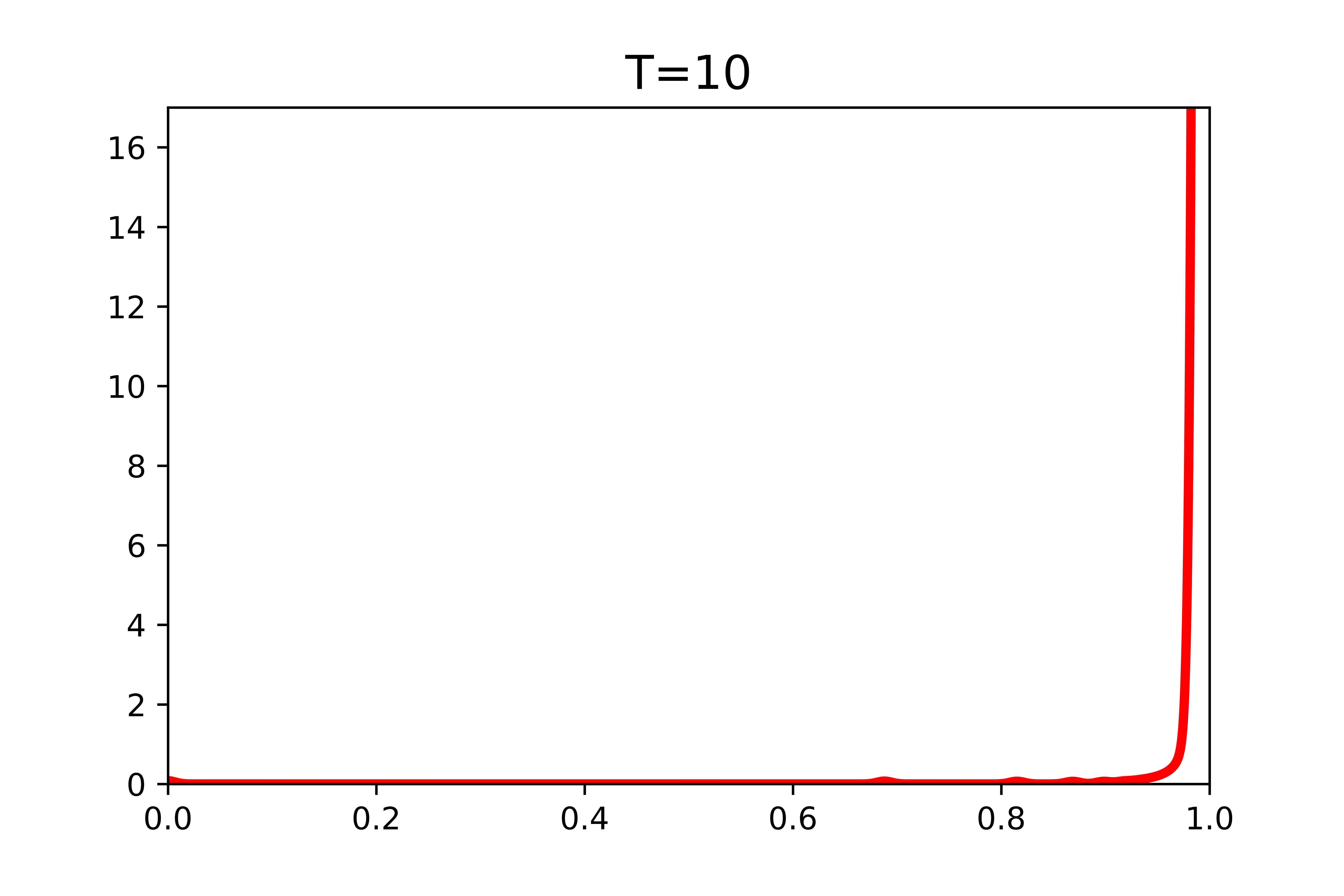}
\label{fig dirac}
\end{subfigure}
\begin{subfigure}{\textwidth}
\includegraphics[width=0.33\textwidth]{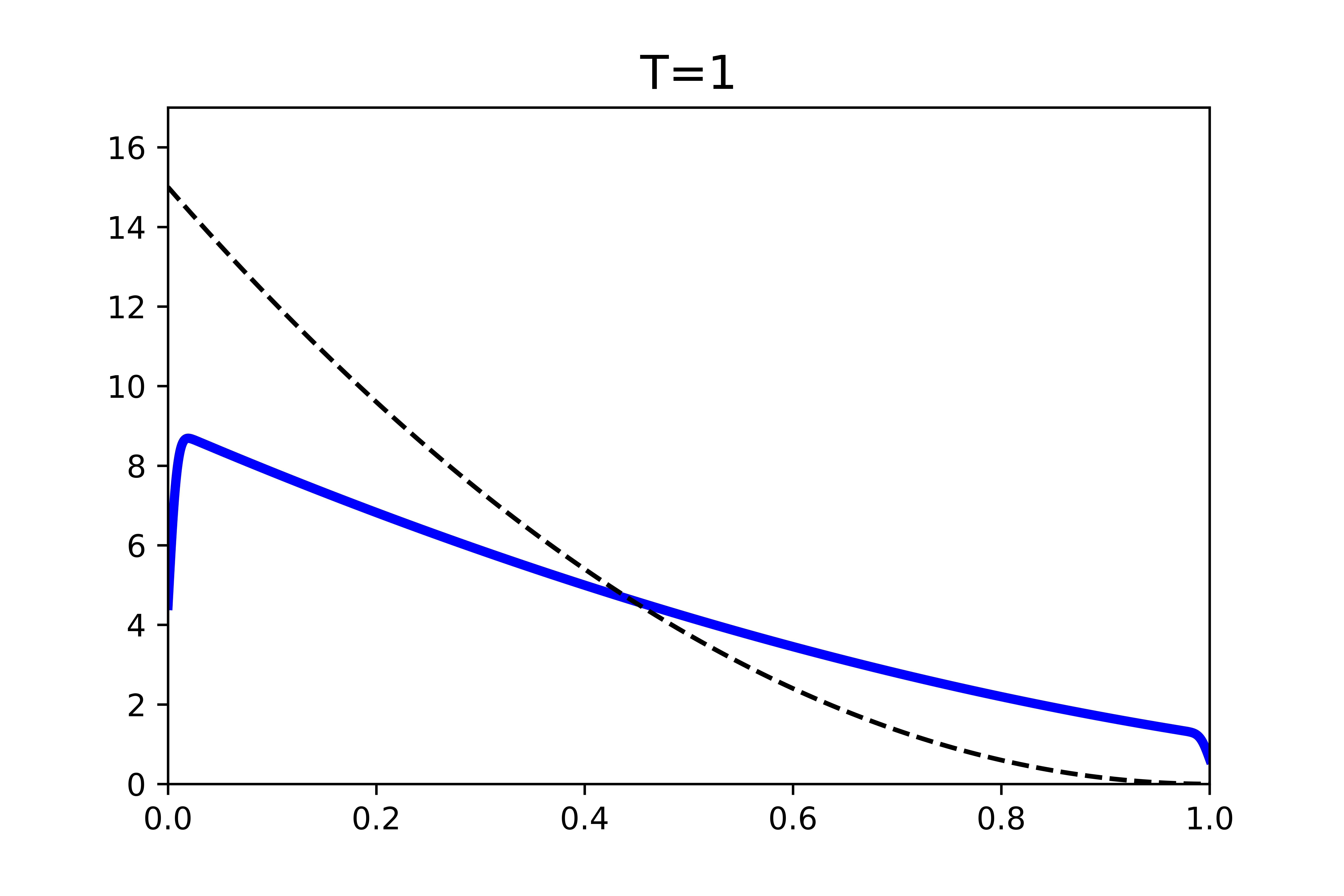}
\includegraphics[width=0.33\textwidth]{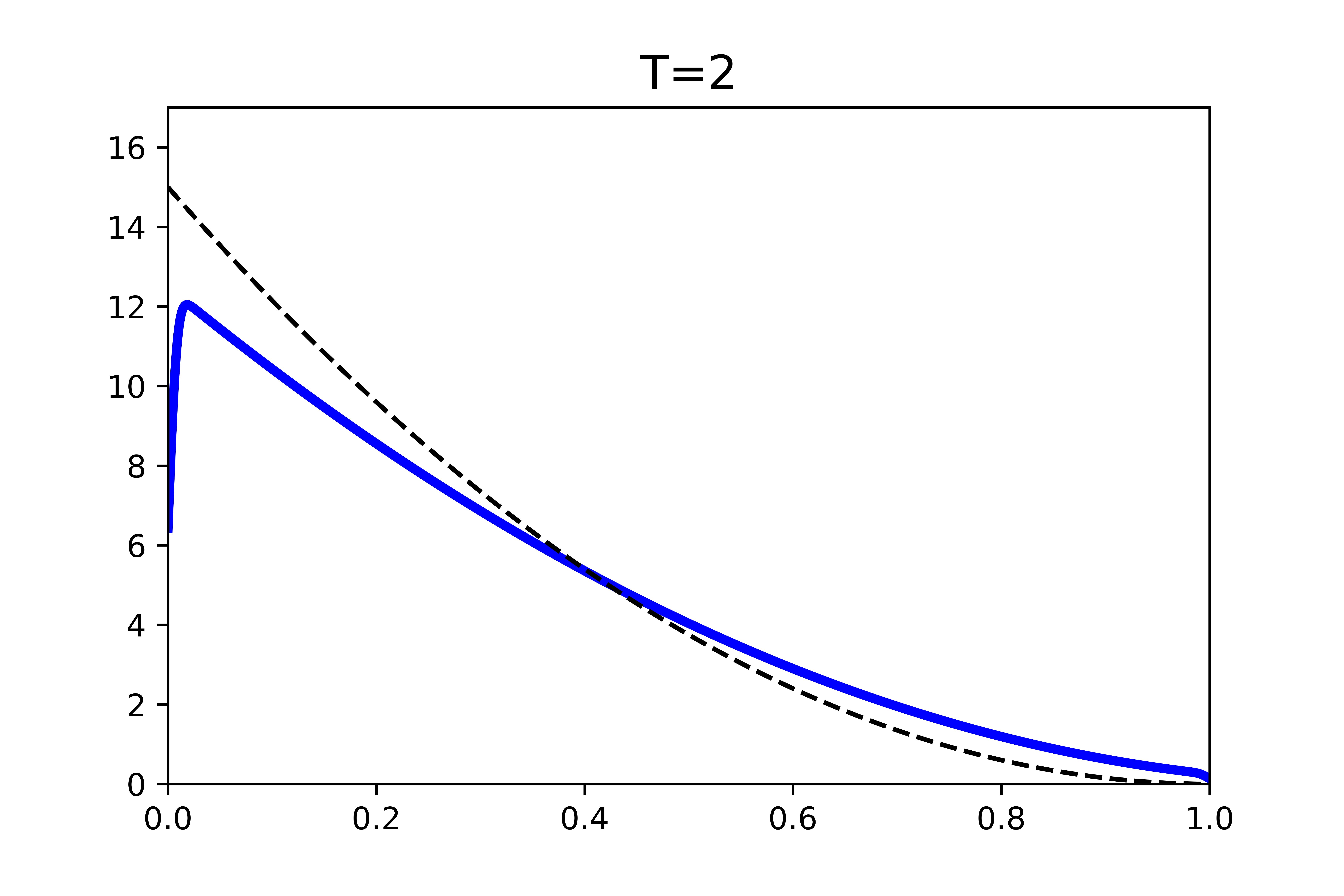}
\includegraphics[width=0.33\textwidth]{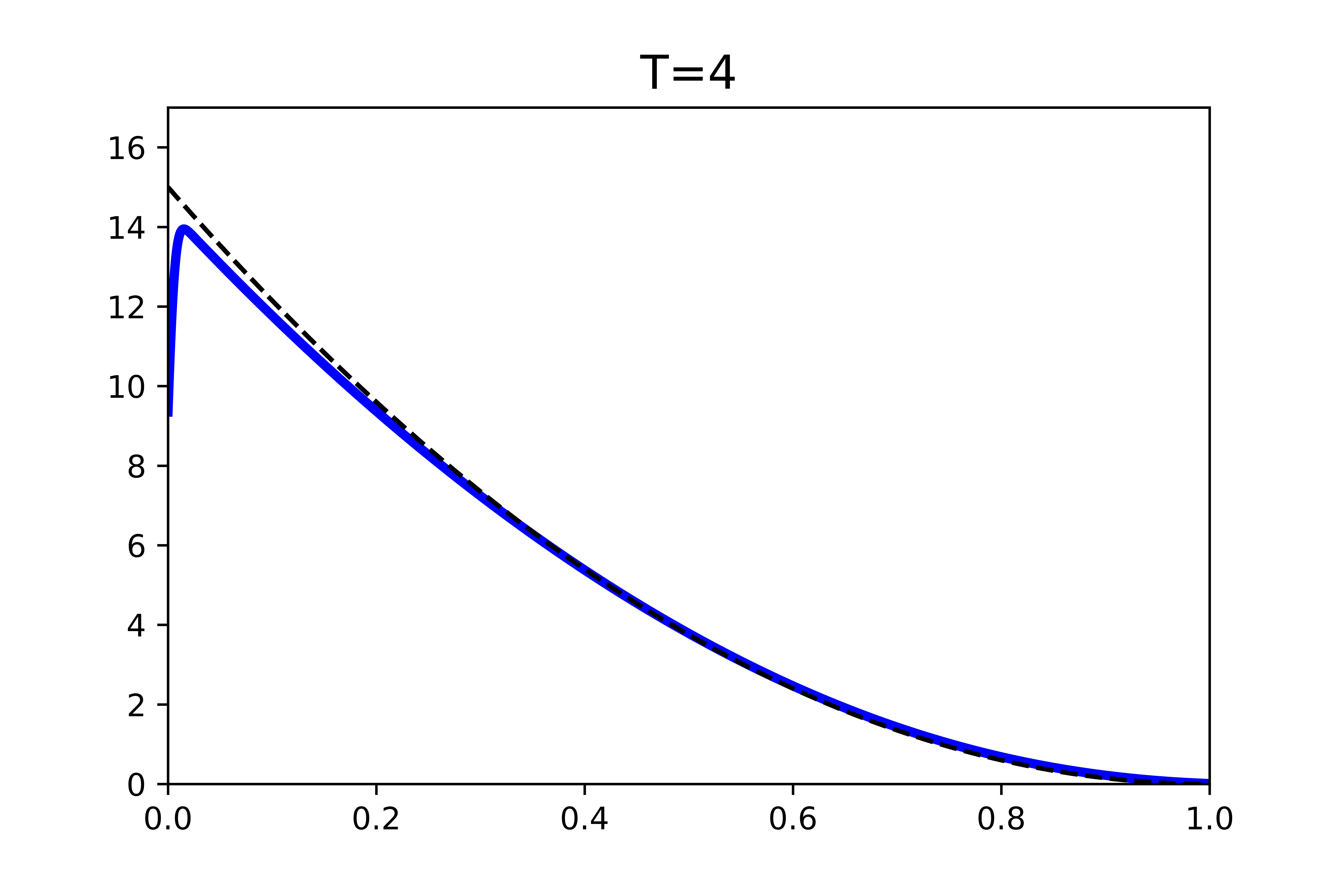}
\label{fig continuous}
\end{subfigure}

\caption{The two possible regimes of convergence stated in Proposition \ref{intro two roots}. In both cases, we have chosen $f(x)=x(1-x)$, $n^0\equiv 6$, and we work on the segment $(0,1)$ (hence $x_u=0$, $x_s=1$). The three figures above (in red) show the time evolution of the solution in the case where $r(x)=6-0.5x$ (and thus $5.5=r(1)>r(0)-f'(0)=5$), which implies, according to Proposition \ref{intro two roots}, that the solution converges to  a weighted Dirac mass at $1$. The three figures below (in blue) show the time evolution of the solution in the case where $r(x)=6-4x$, (and thus $2=r(1)<r(0)-f'(0)=5$), which implies that the solution converges to a continuous function in $L^1$. The black dashed curve represents this limit function, which can explicitly be computed (see Proposition \ref{2 equilibria second}).\\
}
\label{fig 2 regimes}
\end{figure}

\subsection{Discussion}

\paragraph{Open problems.}
Some limit cases of the problem remain unclear: we do not deal with the case of non-hyperbolic equilibria, \textit{i.e.} $\bar x\in \R$ which satisfy $f(\bar x)= f'(\bar x)=0$, and we are not able to determine what happens in the case where several carrying capacities, as defined in Section \ref{section resolution}, converge to the same maximum limit. This last case might lead to other asymptotic behaviours, such as convergence to a sum of weighted Dirac masses, or a sum of weighted Dirac masses and $L^1$-functions.
Lastly, we did not manage to elucidate the equality cases (of the form $r(x_s) =r(x_u)-f'(x_u)$).

Furthermore, even if the framework introduced in Section \ref{section resolution} could theoretically be applied in any dimension, computing the limits of the carrying capacities seems out of reach in the multidimensional case. As shown by the semi-explicit expression introduced in Subsection \ref{semi-explicit},
%\begin{align}
%n(t,x)&=n^0(Y(t,x))e^{\int_0^t{(r-\dive f)(Y(s,x))-\rho(s)ds}},
%\tag{\ref{n(t,x)}}
%\end{align}
%where $Y(\cdot, x)$ is the solution of 
the behaviour of $n$ is closely linked to that of the solutions of ODE $\dot x = f(x)$, which suggests that other asymptotic behaviours, such as convergence to a limit cycle, or chaotic behaviours (if the dimension is greater than or equal to $3$) might occur. 

These behaviours may be excluded by making specific assumptions regarding the function $f$, for example by requiring in the 2D case that ODE $\dot x = f(x)$ be competitive or cooperative. Additionally if the roots of $f$ are hyperbolic and none of them is a repellor, then $n$ cannot converge to a $L^1$-function (Proposition~\ref{non conv in L1}). Nevertheless, the question of the asymptotic limit of $n$ in this case remains open, and might be, in the presence of a saddle point, a singular measure which is not a sum of weighted Dirac masses. This situation is commonplace for some applications, since toggle switches used to model cell differentiation phenomena are usually competitive or cooperative ODE models. 

\paragraph{Perspectives.}
A natural generalisation for the model would be to model mutations, either by means of a Laplacian term or an integral term. Because of their smoothing effect, convergence to Dirac masses will typically be lost. The method developed in this paper does not seem to handle such cases well. However, it is an interesting perspective to tackle the asymptotic behaviour with entropy methods when mutations are  added~\cite{michel2005general}. 

From the numerical point of view, we have proved that the solution of this equation could be approximated with a particle method, with which we obtained the plots of Figure \ref{fig 2 regimes}. The details of the scheme, and the proof of its convergence will be published in a forthcoming article~\cite{Ernesto_Guilberteau}.  

\paragraph{Outline of the paper.} This paper is organised as follows: Section \ref{section framework} introduces the measure-theoretic framework in which convergence is considered, and includes several important reminders regarding ODE theory which will be used throughout the article. Section \ref{section resolution} details the method used to determine the asymptotic behaviour of \eqref{eq intro}, and Section \ref{section unidimensional} corresponds to a direct application of this method to several examples in the one-dimensional case. Lastly, section \ref{section higher dimension} presents two results in higher dimension which allow to determine, in some specific cases, if some initial solution can lead to a convergence to a smooth function or not.

\section{Framework and  reminders}
\label{section framework}
We consider the asymptotic behaviour of the integro-differential PDE 
\begin{equation}
\begin{cases}
\partial_tn(t,x)+\nabla \cdot (f(x)n(t,x)) = (r(x)-\rho(t))n(t,x), \quad  t\geq 0, \; x \in \R^d\\
\rho(t)=\int_{\R^d}{n(t,x)}dx, \quad t \geq 0 \\
n(0,x)=n^0(x), \quad  x\in \R^d.
\end{cases}
\tag{\ref{eq intro}}
\end{equation}
All along the article, we make the following regularity hypotheses 
\begin{itemize} 
\item $f$ is Lipschitz-continuous, and is in $\mathcal{C}^2(\R^d)$. 
\item  $r$ is positive, is in $L^1(\R^d)\cap \mathcal{C}^1(\R^d)$, and goes to zero when $\lVert x\rVert$ goes to $+\infty$. Let us note that these hypotheses imply that $r$ is bounded. 
\item  $n^0$ is in  $\mathcal{C}_c^1(\R^d)$ (the space of $\mathcal{C}^1$ functions with a compact support), is non-negative and is not the zero function.  
\end{itemize}
%These hypotheses will be made throughout the document. 
Whenever possible, we will indicate whether these hypotheses can be weakened for a given specific result. If not specified, it will be assumed that these three hypotheses hold.

From the modelling point of view, they can be justified as follows: $n^0$ denoting the initial density, it is reasonable to consider that a bounded range of phenotypic traits is initially represented; the hypothesis on $r$ at $+\infty$ is made in order to prevent an unlikely proliferation of individuals with more and more extreme  ($\lVert x \rVert \to +\infty $) phenotypic traits. 
%At least, the regularity hypothesis on each of these functions are made for \textit{ad hoc} computations and estimations, as we will see throughout the article. 

Under the above hypotheses, we can prove that there exists a unique solution $n\in \mathcal{C}\left(\R_+, L^1(\R)\right)$ for this Cauchy problem by coupling the well-known method of characteristics for the advection equation \cite{diperna1989ordinary} with the method applied in \cite{perthame2006transport} for the case $f\equiv 0$. We do not elaborate further here on the issue of existence and uniqueness, that will be addressed in a more general framework in an upcoming article \cite{Ernesto_Guilberteau}.

Since we are concerned with the long-time behaviour of the PDE~\eqref{eq intro}
%, let us start by defining the space in which convergence takes place. As 
and we expect to obtain convergence either to Dirac masses or to regular functions, the space of Radon measures is a natural setting. We start with a few usual reminders. 

\subsection{The space of Radon measures}

% We recall here the definition of the space of Radon measures and few useful results. 

%\begin{definition}[Radon measure]
%Let $\mu$ be a signed measure on $\R^d$. We say that $\mu$ is a Radon measure if $\mu$ is:
%\begin{enumerate}
%\item Inner regular, \textit{i.e } for all open set $\O\subset \R^d$, $\mu(\O)=\sup \{\mu(K), K \textrm{compact set included in } \O \}$
%\item Outer regular, \textit{i.e} for all Borel set $B\subset \R^d$, $\mu(B)=\inf \{ \mu(U), U \textrm{ open set contening } B    \}$
%\item Locally finite, \textit{i.e} for all $x\in \R^d$, there exists $V_x$ a neighbourhood of $x$ such that $\mu(V_x)<+\infty$. 
%\end{enumerate}
%\end{definition}

%We recall a common and useful result which allows to define the weak convergence in this space.

We recall that the space of \textbf{finite Radon measures} can be identified with the topological dual space of $\mathcal{C}_c(\R^d)$, \textit{i.e.} the space of continuous functions on $\R^d$ with a compact support. Thus, we say that a sequence of finite Radon measures $(\mu_k)_{k\in \N}$ \textbf{weakly converges} to a finite Radon measure $\mu$ (denoted $u_k \rightharpoonup \mu$) if 
\[ \forall \varphi\in \mathcal{C}_c(\R^d),\quad \int_{\R^d}{\varphi(x)d\mu_k(x)}\underset{k\to +\infty}{\longrightarrow} \int_{\R^d}{\varphi(x)d\mu(x)}. \]
%
%\begin{prop}
%The space of \textbf{finite} Radon measures can be identified with the topological dual space of $\mathcal{C}_c(\R^d)$. In other words, for all continuous linear form 
%$L:\mathcal{C}_c(\R^d)\to \R$, there exists a unique finite Radon measure $\mu$ such that for all $\varphi \in \mathcal{C}_c(\R^d)$, 
%\[L(\varphi)=\int_{\R^d}{\varphi(x)d\mu(x)}\]
%\end{prop}

%\begin{ex}
%\begin{enumerate}[(i)]
%\item Let $g\in L^1(\R)$. Then $\mu_g$, the natural signed measures associated with $g$, defined as 
%\[\mu_g(B)= \int_{B}{g(x)dx}\]
%is a finite Radon measure.
%\item Any dirac mass is a Radon measure.
%\end{enumerate}
%\end{ex}

%\begin{definition}[Weak convergence]
%Let $(\mu_k)_{k\in \N}$ a sequence of finite Radon measures. We say that $\mu$ weakly converges to a finite Radon measure $\mu$ (denoted $\mu_k\underset{k\to +\infty}{\rightharpoonup} \mu$) if for all $\varphi\in \mathcal{C}_c(\R^d)$
%\[\int_{\R^d}{\varphi(x)d\mu_k(x)}\underset{k\to +\infty}{\longrightarrow} \int_{\R^d}{\varphi(x)d\mu(x)}. \]

%In particular, if $u:\R_+ \times \R^d \to \R$ is an application such that, for all $t\geq 0$, $u(t, \cdot)\in L^1(\R^d)$, $u(t,\cdot)\underset{t\to +\infty}{\rightharpoonup}\mu$ if
%\[\forall \varphi\in \mathcal{C}_c(\R^d), \quad \int_{\R^d}{\varphi(x)u(t,x)dx} \ul \int_{\R^d}{\varphi(x)d\mu(x)}. \]
%\end{definition}
% All along this paper, the notion of `weak convergence', and the symbol `$\rightharpoonup$'
 %will refer to this definition. 
In this article, we will be confronted mainly with convergence to Dirac masses or to $L^1$ functions. It is clear that the convergence in $L^1$ to a certain function implies the weak convergence to this function. The following standard lemma provides a sufficient condition to prove the weak convergence to a single Dirac mass.  For completeness, we provide a proof.

\begin{lem}
Let $u:\R_+ \times \R^d \to \R$ be a non-negative mapping such that $u(t, \cdot)\in L^1(\R^d)$ for all $t\geq 0$, and $u(t, \cdot)$ is compactly supported, uniformly in $t \geq 0$. We assume that there exists $\bar x \in \R^d$, such that for all compact set $K_{\bar x}$ which does not contain  $\bar x$,  $\int_{K_{\bar x}}{ u(t,x) dx}\ul 0$, and that there exists $ V_{\bar x}$ a compact neighbourhood of $\bar x$ and $C\in \R$ such that $\int_{V_{\bar x}}{{u(t,x)dx}}\ul C$. 
Then, $u(t, \cdot)\underset{t\to +\infty}{\rightharpoonup} C\delta_{\bar x}$. 
\label{convergence Radon}
\end{lem}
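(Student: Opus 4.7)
The plan is to prove the weak convergence directly from the definition: for any test function $\varphi\in\mathcal{C}_c(\R^d)$, show that $\int \varphi(x)\,u(t,x)\,dx \to C\,\varphi(\bar x)$. The overall strategy is a standard ``concentration'' argument: split the integral into a piece on a small ball around $\bar x$, where $\varphi\approx\varphi(\bar x)$ is essentially constant, and a piece away from $\bar x$, where the mass of $u(t,\cdot)$ vanishes by hypothesis.

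First, I would exploit the uniform compact support: let $K\subset\R^d$ be a compact set containing $\mathrm{supp}\,u(t,\cdot)$ for all $t\geq 0$, so every integral below is really an integral over $K$. Fix $\varphi\in\mathcal{C}_c(\R^d)$ and $\varepsilon>0$. By continuity of $\varphi$ at $\bar x$, choose $r>0$ small enough that $|\varphi(x)-\varphi(\bar x)|<\varepsilon$ on $\bar B(\bar x,r)$, and additionally small enough that $B(\bar x,r)\subset V_{\bar x}$ (possible since $V_{\bar x}$ is a neighbourhood of $\bar x$). The set $K\setminus B(\bar x,r)$ is then a compact set not containing $\bar x$, so by hypothesis $\int_{K\setminus B(\bar x,r)} u(t,x)\,dx \to 0$, and since $\varphi$ is bounded,
\[
\left|\int_{\R^d\setminus B(\bar x,r)} \varphi(x)\,u(t,x)\,dx\right| \leq \|\varphi\|_\infty \int_{K\setminus B(\bar x,r)} u(t,x)\,dx \underset{t\to+\infty}{\longrightarrow} 0.
\]

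The key step is to show that $\int_{B(\bar x,r)} u(t,x)\,dx \to C$. For this I would use the decomposition $V_{\bar x} = (V_{\bar x}\cap B(\bar x,r)) \cup (V_{\bar x}\setminus B(\bar x,r))$; the second piece is a compact set not containing $\bar x$, hence its integral vanishes, and since $B(\bar x,r)\subset V_{\bar x}$, we obtain $\int_{B(\bar x,r)} u(t,x)\,dx \to C$. This is the mildly technical point of the proof: both hypotheses must be combined to transfer the known asymptotic $C$ on $V_{\bar x}$ to an arbitrarily small ball around $\bar x$.

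It then remains to estimate the near part by the triangle inequality:
\[
\left|\int_{B(\bar x,r)} \varphi(x)\,u(t,x)\,dx - \varphi(\bar x)\int_{B(\bar x,r)} u(t,x)\,dx\right| \leq \varepsilon \int_{B(\bar x,r)} u(t,x)\,dx,
\]
whose right-hand side is bounded by $\varepsilon(C+1)$ for $t$ large. Combining the two pieces yields $\limsup_{t\to+\infty}\bigl|\int \varphi\,u(t,\cdot) - C\,\varphi(\bar x)\bigr| \leq \varepsilon(C+1)$, and letting $\varepsilon\to 0$ gives the claim. The argument is essentially bookkeeping; the only genuine obstacle is the reduction from $V_{\bar x}$ to an arbitrarily small ball around $\bar x$, handled as above.
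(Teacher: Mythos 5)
Your proof is correct and follows essentially the same concentration argument as the paper: restrict to the uniform compact support $K$, isolate a small ball around $\bar x$ on which $\varphi$ is within $\varepsilon$ of $\varphi(\bar x)$, and use the two hypotheses to kill the far mass and identify the near mass. The only (harmless) difference is bookkeeping: you prove $\int_{B(\bar x,r)}u(t,x)\,dx\to C$ exactly, whereas the paper only uses the upper bound $\int_{B_{\bar x}}u\leq C+\varepsilon$ together with $\int_K u\to C$.
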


\begin{proof}
 Let $\varphi\in \co_c (\R^d)$, and let $K$ be a compact set such that, for all $t \geq 0$, $\supp(u(t,\cdot))\cup  V_{\bar x}\subset K$. Then, 
\begin{align*}
\bigg\lvert \int_{\R^d}{\varphi(x)u(t,x)dx}-C\varphi(\bar x)\bigg\rvert &= \bigg\lvert \int_{K}{\varphi(x)u(t,x)dx}- \int_{K}{\varphi(\bar x)u(t,x)dx} + \int_{K}{\varphi(\bar x)u(t,x)dx} - C\varphi(\bar x) \bigg\rvert\\
&\leq  \int_{K}{  \lvert \varphi(x)-\varphi(\bar x) \rvert u(t,x) dx } + \lvert \varphi(\bar x)\rvert \bigg\lvert \int_{K}{u(t,x)dx}-C \bigg\rvert.   \\
\end{align*}
%By rewriting 
%\[\int_{K}{u(t,x)dx}= \int_{ V_{\bar x}}{u(t,x)dx}+\int_{K \backslash  V_{\bar x}}{u(t,x)dx}, \]
%ne notes that this term converges to $C$ as $t$ goes to $+\infty$, since the closure of $K\backslash V_{\bar x}$ is a compact set which does not contain $\bar x$,  and thus that 
%\[ \bigg\lvert \int_{K}{u(t,x)dx}-C \bigg\rvert  \ul 0.  \]
The second term tends to $0$ since $K$ contains $V_{\bar x}$.
It remains to prove that 
$t\mapsto \int_{\R^d}{  \lvert \varphi(x)-\varphi(\bar x) \rvert u(t,x)  dx }$
converges to zero. Let $\e>0$ be given. Since $\varphi$ is continuous, there exists $B_{\bar x}$ a neighbourhood of $\bar x$, which can be chosen as a subset of $V_{\bar x}$,  such that $\lvert \varphi(x)-\varphi(\bar x)\rvert \leq \e$, for all $x\in B_{\bar x}$. 
Thus, for all $t\geq 0$, 
\begin{align*}
\int_{K}{ \lvert \varphi(x)-\varphi(\bar x) \rvert  u(t,x)  dx } &= \int_{K\backslash B_{\bar x}}{   \lvert \varphi(x)-\varphi(\bar x) \rvert u(t,x) dx}+ \int_{B_{\bar x}}{   \lvert \varphi(x)-\varphi(\bar x) \rvert u(t,x) dx}\\
&\leq 2 \lVert \varphi \rVert_{\infty} \int_{K \backslash B_{\bar x}}{ u(t,x)  dx}+ \e \int_{B_{\bar x}}{ u(t,x) dx}. 
\end{align*}
This concludes the proof, since $t \mapsto \int_{K \backslash V_{\bar x}}{u(t,x)dx}$ converges to zero and for any $t$ large enough,  $\int_{ B_{\bar x}}{ u(t,x) dx}\leq \int_{V_{\bar x}}{u(t,x)dx}\leq C+\e$. 
\end{proof}

%\newpage

%\begin{notation}
%Let $I$ an interval of $\R$. We denote $C^1(I)$ the space of differentiable functions on $I$ with a continuous derivative on $I$, and with a continuous derivative on $I^\mathsf{o}$.

%, for any $xa<b$, the spaces $C^1((a,b))$ and $C^1([a,b])$ are drastically different. Indeed, all the functions of $C^1([a,b])$ are bounded and have a bounded derivative, which is not necessarily the case of the functions of $C^1((a,b))$. 
%\end{notation}

%Let $r\in C^1([0,1])$ a positive function, $G\in C^1(\R_+)$ a non-negative and increasing function such that $G'>0$ on $\R_+^*$, $G(0)=0$ and $G(x)$ goes to $+\infty$ as $x$ goes to $+\infty$, and let $n^0\in  L^{\infty}(0,1)$ be a non-negative function which is not identically equal to zero. 

%\section{Preliminary results and explicit expressions of solutions}

\subsection{General statement regarding the characteristics curves}
\label{section characteristic curves}
We are led to consider the \textbf{characteristics curves} associated with the advection term. In this section, we introduce some notations and state some classical results from ODE theory, that will prove to be useful later on. 

Since $f$ is assumed to be Lipschitz-continuous, the global Cauchy-Lipschitz theorem ensures the global existence on $\R_+$ and the uniqueness of the characteristic curves related to $f$ defined for all $y\in \R^d$ as the solution to the ODE 
\begin{align}
\begin{cases}
\dot X(t,y)=f(X(t,y)) \quad  t\geq 0\\
X(0,y)=y
\end{cases}.
\label{X(t,y)}
\end{align}
It is well-known that for all $t\geq 0$, $y\mapsto X(t,y)$ is a $C^1$-diffeomorphism between $\R^d$ and itself~\cite{diperna1989ordinary}, and that the inverse function of $X(t,\cdot)$, that we denote $x\mapsto Y(t,x)$, is the unique solution of 
\begin{align}
\begin{cases}
\dot Y(t,x)=-f(Y(t,x)) \quad  t\geq 0\\
Y(0,x)=x
\end{cases}. 
\label{Y(t,x)}
\end{align}
Moreover, \textbf{Liouville's formula} states that for all $t\geq 0$ and $y\in \R^d$, 
\begin{align}
\det \left(\mathrm{Jac}_yX(t,y)\right)=e^{\int_0^t \dive f(X(s,y))ds}. 
\label{det(X(t,y))}
\end{align}
It follows from the uniqueness of solutions to $\eqref{X(t,y)}$ that for all $0 \leq s \leq t$, 
\begin{align}
X(s, Y(t,x))=Y(t-s, x). 
\label{X(s,Y(t,x))}
\end{align}
%Moreover, for all $t>0$ and $s\in [0,t]$, 
%\begin{align*}
%\frac{d}{ds}X(s,Y(t,x))=f(X(s,Y(t,x))) \quad \text{and} \quad \frac{d}{ds}Y(t-s,x)=-f(Y(t-s,x)).
%\end{align*}
%Since $X(0,Y(t,x))=Y(t,x)$, it proves that, for any $0\leq s\leq t$, 
%\begin{align}
%X(s,Y(t,x))=Y(t-s,x)=X(s-t,x).
%\label{X(s,Y(t,x))}
%\end{align}
%In the same way, one can show that 
%\begin{align}
%Y(s, X(t,y))=Y(s-t,y)=X(t-s,y).
%\label{Y(s,X(t,y))}
%\end{align}

\textbf{Specific results in $\R$.}
Let us note that the behaviour of the characteristic curves is particularly simple in $\R$. Indeed, an elementary ODE analysis shows that for all $x, y\in \R$, $t\mapsto X(t, y)$ and $t\mapsto Y(t,x)$ are monotonic functions. This implies that these characteristic curves either converge to a root of $f$, or go to $\pm \infty$ as $t\to +\infty$. More precisely, if $f$ has a finite number of roots, then for all $y\in \R$ such that $f(y)>0$, $t\mapsto X(t,y)$ converges to the closest root of $f$ which is greater than $y$, or to $+\infty$ if $y$ is greater than the greatest root of $f$. Similarly, for all $y\in \R$ such that $f(y)<0$, $t\mapsto X(t,y)$ converges to the closest root of $f$ which is lesser $y$, and to $-\infty$ if $y$ is lesser the smallest root of $f$. 

%\begin{itemize}
%\item If $y$ is a root of $f$, then $t\mapsto X(t,y)$ is constant, equal to $y$. 
%\item If $f(y)>0$, and if there exists a root of $f$ greater than $y$, then $t\mapsto X(t,y)$ converges to the smallest root of $f$ greater than $y$.
%\item If $f(y)>0$, and if $y$ is greater than the greatest root of $f$, then $t\mapsto X(t,y)$ goes to $+\infty$ as $t$ goes to  $+\infty$. 
%\item If $f(y)<0$, and if there exists a root of $f$ which is less than $y$, then $t\mapsto X(t,y)$ converges to the greatest root of $f$ less than $y$. 
%\item If $f(y)<0$, and if $y$ is less than the smallest root of $f$, then $t\mapsto X(t,y)$ goes to $-\infty$ as $t$ goes to $+\infty$. 
%\end{itemize}
Moreover, if each of these roots are hyperbolic equilibrium points for the ODE $\dot x =f(x)$, \textit{i.e.} if $f'(\bar x)\neq 0$ for all $\bar x$ root of $f$, then a given root of $f$ is either asymptotically unstable (\textit{i.e.} $f'(\bar x)>0$), which implies that its basin of attraction is limited to itself, or asymptotically stable (\textit{i.e.} $f'(\bar x)<0$), which implies that its basin of attraction in an open interval containing $\overline{x}$. 

Lastly, let us recall that under these hypotheses, the convergence to an asymptotically stable point happens with an exponential speed, which means that for all $y\in \R$, $\bar x $ root of $f$, 
%\[X(t,y)\underset{t\to +\infty}{\longrightarrow} \bar x \quad \Rightarrow \quad \exists \,  C_y, \delta_y>0 : \forall t\geq 0, \;\lvert X(t,y)-\bar x \rvert\leq C_y e^{-\delta_y \, t}. \]
\[X(t,y)\underset{t\to +\infty}{\longrightarrow} \bar x \quad \Rightarrow \quad \exists \delta_y>0: \; X(t,y)-\bar{x} =\underset{t\to +\infty}{O}(e^{-\delta_y \, t}) \]
Since the reverse characteristic curves satisfy \eqref{Y(t,x)}, the same results hold for $Y(t,x)$, provided that we replace $f$ by $-f$.  In brief, the asymptotically stable equilibria become unstable for the reverse ODE, and vice versa,  and if $t\mapsto X(t,y)$ is increasing (respectively decreasing), then $t\mapsto Y(t,x)$ is decreasing (respectively increasing). 
%These basic results will be widely used, and having them in mind often provide an insight into the different results that will be stated. 

\section{Resolution method}
\label{section resolution}

The method of resolution to determine the asymptotic behaviour of $n$ that we propose here is based on the following two propositions, which are developed in the following two subsections, respectively:
\begin{enumerate}
\item For all $t\geq 0$, $x\in \R^d$, we can express $n(t,x)$ as a function which only depends on $t, x$,  on the functions $n^0$, $f$ and $r$, on the inverse characteristic curves $Y(t,x)$, and on the population size $\rho$. Therefore, knowing the limit of  $Y(t,x)$ and $\rho(t)$ as $t$ goes to $+\infty$ is enough to understand the long-time behaviour of $n$. 
\item The population size $\rho$ is the solution of a non-autonomous ODE, and its long-time behaviour may be inferred from the limit of some parameter-dependent integrals. 
\end{enumerate}

Combining these two propositions allows us to reduce the study of the asymptotic behaviour of $n$ to that of parameter-dependent integrals. 

\subsection{Semi-explicit expression of the solution}
\label{semi-explicit}
According to the definition of the characteristic curves \eqref{X(t,y)}, for all $t\geq 0$ and all $y\in \R^d$, 
\begin{align*}
\frac{d}{dt}n(t,X(t,y))=\biggl(r(X(t,y))-\dive f(X(t,y))-\rho(t)\biggr)n(t,X(t,y)),
\end{align*} 
 \textit{i.e.}
\begin{align*}
 n(t,X(t,y))=e^{\int_0^t{ \left(r(X(s,y))-\dive f (X(s,y))-\rho(s)\right)ds}}n^0(y).
\end{align*}
Replacing $y$ by $Y(t,x)$ in this last expression, we get a \textbf{semi-explicit} expression for $n$, which is expressed as a function of $t,x$ and $\rho$: 
\begin{align}
\begin{split}
n(t,x)&=n^0(Y(t,x))e^{\int_0^t{\left((r-\dive f)(X(s,Y(t,x)))-\rho(s)\right)ds}}\\
&=n^0(Y(t,x))e^{\int_0^t{\left((r-\dive f)(Y(s,x))-\rho(s)\right)ds}},
\end{split}
\label{n(t,x)}
\end{align}
The second equality holds according to equality \eqref{X(s,Y(t,x))} and the change of variable $s'=t-s$. 

Beyond the non-negativity of $n$, this semi-explicit expression shows that determining the asymptotic behaviour of $\rho$ and $Y$ is enough to uncover that of $n$. In the following section, we show that $\rho$ is the solution of a non-autonomous ODE, and that its asymptotic behaviour is related to that of parameter-dependent integrals. 

This expression also provides exhaustive information about the support of of $n(t, \cdot)$: indeed, it ensures that for all $t\geq 0$, 
\begin{align}
\supp \left(n(t, \cdot)\right)= \supp \left(n^0 \circ Y(t, \cdot ) \right)= X\left(t, \supp \left(n^0\right)\right). 
\label{supp n}
\end{align}
Since $n^0$ is assumed to have a compact support, then so does $n(t, \cdot)$ for any $t\geq 0$.

We recall that a set $\E\subset \R^d$ is said to be \textbf{positively invariant} for the ODE $\dot x = f(u)$ if for all $t\geq 0$, $X(t, \E)\subset \E$. 

With this definition in mind, it becomes clear, according to \eqref{supp n}, that if $\supp \left(n^0\right)$ is positively invariant for the ODE $\dot x=f(x)$, then $\supp \left(n(t,\cdot)\right)\subset \supp \left(n^0\right)$, for all $t\geq 0$, and, more generally, that if there exists $\E \subset \R^d$ a set which is positively invariant for this ODE such that $\supp \left(n^0 \right) \subset \E$, then $\supp \left(n(t, \cdot) \right)\subset \E$, for all $t\geq 0$. Hence, even if PDE \eqref{eq intro} is defined for all $x\in \R^d$, if the support of $n^0$ is included in a compact subset of $\R^d$ which is positively invariant, then everything happens as if we were working in this compact set. In particular, the functions $f$ and $r$ do not need to be defined outside this set. 

\subsection{ODE satisfied by the population size}

Let us start with a basic lemma which ensures that the population size $\rho$ does not blow up as $t$ tends to~$+\infty$. 
\begin{lem}[Bounds on $\rho$] 
Let $\rho$ be defined as in \eqref{eq intro}. Then for all 
$ t \geq 0$,  $\rho(t) \leq \max \left(\lVert r \rVert_{\infty},  \rho(0)\right)$. 
\end{lem}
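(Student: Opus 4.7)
The plan is to derive an ODE inequality for $\rho$ from the PDE and then conclude by elementary ODE comparison.

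First, I would integrate equation~\eqref{eq intro} over $\R^d$. Since $n^0$ has compact support, the discussion following~\eqref{supp n} shows that $\supp(n(t,\cdot)) = X(t, \supp(n^0))$ is compact for every fixed $t \geq 0$. The divergence theorem therefore gives $\int_{\R^d} \nabla \cdot (f(x) n(t,x))\, dx = 0$, and differentiating under the integral sign (justified by the regularity of $n$) yields
\begin{equation*}
\rho'(t) = \int_{\R^d} r(x) n(t,x)\, dx - \rho(t)^2 \leq \|r\|_\infty\, \rho(t) - \rho(t)^2 = \rho(t)\bigl(\|r\|_\infty - \rho(t)\bigr).
\end{equation*}

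Now I would argue by cases on $\rho(0)$. If $\rho(0) \leq \|r\|_\infty$, then setting $M := \|r\|_\infty$ and using the differential inequality $\rho'(t) \leq \rho(t)(M - \rho(t))$: at any time $t$ where $\rho(t) = M$, we would have $\rho'(t) \leq 0$, so $\rho$ cannot cross the level $M$ from below. Hence $\rho(t) \leq M = \|r\|_\infty \leq \max(\|r\|_\infty, \rho(0))$ for all $t \geq 0$. If instead $\rho(0) > \|r\|_\infty$, then as long as $\rho(t) > \|r\|_\infty$ the inequality gives $\rho'(t) < 0$, so $\rho$ is strictly decreasing on this range; in particular $\rho(t) \leq \rho(0) = \max(\|r\|_\infty, \rho(0))$ until $\rho$ reaches the level $\|r\|_\infty$, after which the first case applies and $\rho(t) \leq \|r\|_\infty$.

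There is no real obstacle here: the only subtlety is to justify the vanishing of the advection boundary term, for which the compact-support property~\eqref{supp n} is exactly what is needed, and the differentiation of $\rho$ under the integral, which follows from the $\mathcal{C}^1$-in-time regularity of $n$ together with its compact $x$-support. The bound then follows from the elementary logistic comparison above.
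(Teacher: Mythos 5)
Your proof is correct and follows essentially the same route as the paper: integrate the PDE, use the compact support of $n(t,\cdot)$ to discard the advection term, obtain the logistic differential inequality $\dot\rho \leq (\lVert r\rVert_\infty - \rho)\rho$, and conclude by comparison. The paper simply invokes that $\rho$ is a sub-solution of the logistic ODE where you spell out the elementary crossing argument; the content is the same.
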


\begin{proof}
According to \eqref{supp n}, since, $n^0$ is assumed to have a compact support, $n(t, \cdot)$ has a compact support for all $t\geq 0$. Hence, when  integrating the fist line of $\eqref{eq intro}$, the advection term vanishes, and we get
\[\dot \rho(t)=\int_{\R^d}{\big(r(x)-n(t,x)\big) n(t,x)dx}\leq \left(\lVert r \rVert_{\infty}-\rho(t)\right) \rho(t). \]
In other words, $\rho$ is a sub-solution of the logistic ODE $\dot u = \left(\lVert r \rVert_{\infty}-u\right) u$, which proves the result. \label{bounds rho}
\end{proof}

In the remainder of this section, we show that $\rho$ is in fact the solution to a non-autonomous logistic equation, which can be written in different forms.  In order to lighten the future expressions, we now denote 
\[\tr:=r-\dive f. \]
Let $\E\subset \R^d$ be any measurable subset of $\R^d$, and let us denote 
\[\rho_\E(t):=\int_\e{n(t,x)dx}, \]
which is well-defined and bounded, according to Lemma \ref{bounds rho}. 
By integrating the semi-explicit expression~\eqref{n(t,x)} of $n$ over $\E$, we obtain the equality
\begin{align}
\rho_\E(t)=S_\E(t)e^{-\int_0^t{\rho(s)ds}}, 
\end{align}
\label{rho_E}
where 
\[S_\E(t):=\int_\E{n^0(Y(t,x))e^{\int_0^t{\tr(Y(s,x))ds}}dx}\]
is a function which only depends on the parameters $f,r$ and $n^0$. This function is well-defined, and differentiable, thanks to our regularity assumptions, and since for all $t\geq 0$ $n^0(Y(t, \cdot))$ has compact support. 
Thus, under the hypothesis
that for all $t\geq 0$, $S_\E(t)>0$,
we  obtain  
\[\ln\left(\rho_\E(t)\right)=\ln\left(S_\E(t)\right)-\int_0^t{\rho(s)ds}, \] 
and finally, by differentiating and multiplying by $\rho_\E$ on both sides, 
\begin{align}
\dot \rho_\E(t)=\left(\frac{\dot S_\E(t)}{S_\E(t)}-\rho(t)\right)\rho_\E(t).
\label{dot rho E}
\end{align}
At this stage, one might be tempted to choose $\E=\R^d$ to obtain, denoting $S:= S_{\R^d}(t)$,
\begin{align}
\dot \rho(t)=\left(\frac{\dot S(t)}{S(t)}-\rho(t)\right)\rho(t).
\label{dot rho R d}
\end{align}
This proves that $\rho$ is the solution to a non-autonomous logistic equation, and the study of such equations \cite{hallam1981non} proves that if the time-dependant carrying capacity $t\mapsto \frac{\dot S(t)}{S(t)}$ converges, then $\rho$ converges to the same limit. Unfortunately, computing the limit of $t\mapsto \frac{\dot S(t)}{S(t)}$ is intricate (except in very specific cases). This brings us to introducing a more general framework, which involves simpler functions whose limit can be computed (at least in the case $x\in \R$). The idea is to partition the space $\R^d$ into several well-chosen subsets, and to consider the size of the population on each of these sets. As seen above, to obtain equations of the type~\eqref{dot rho E}, we must be cautious when choosing these subsets in order for the corresponding functions $S_\E$ to be positive. All this leads us the following proposition: 
\begin{prop}
Let $\U\subset \R^d$ be a set such that 
\begin{align}
X(\R_+\times \mathrm{supp}(n^0))\subset \U
\label{condition O}
\end{align}
and let $\left(\O_i\right)_{i\in \{1,..., N\}}$ be a finite family of open subsets of $\U$ such that 
\begin{enumerate}[(i)]
\item $\forall i \neq j, \quad \O_i \cap \O_j=\emptyset$.
\item $\nu\left( \U\backslash \bigcup\limits_{i=1}^N{\O_i}\right)=0$, where $\nu$ denotes the  Lebesgue measure.
\item  $\forall i\in \{1,... N\}, \quad \forall t\geq 0, \quad   X\big(t, \supp \left(n^0\right)\big)\cap \O_i\neq \emptyset $.
\end{enumerate}
Then, by denoting for all $i\in \{1,..., N\}$ 
\begin{align}
\rho_i(t)&:=\int_{\O_i}{n(t,x)dx},  \label{rho_i} \\
S_i(t)&:=\int_{\O_i}{n^0(Y(t,x))e^{\int_0^t{\tr(Y(s,x))ds}}dx}, \label{Si} \\
R_i(t)&:=\frac{\dot{S}_i(t)}{S_i(t)}, \label{R_i}
\end{align}
the following equation holds:
\begin{align}
\begin{cases}
\dot \rho_i(t)=\left(R_i(t)-\rho(t)\right)\rho_i(t)\quad \forall t\geq 0, \quad  \forall i\in \{1,..., N\}\\
\rho(t)=\sum\limits_{i=0}^N{\rho_i(t)}\quad \forall t\geq 0\\
\rho_i(0)>0 \quad \forall i\in \{1,..., N \}\\
\end{cases}.
\label{main ODE}
\end{align}
\label{prop Oi}
\end{prop}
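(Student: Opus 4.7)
The plan is to apply, on each open set $\mathcal{O}_i$, the same reasoning that was used to derive \eqref{dot rho E} for a generic measurable set $\mathcal{E}$, and then to stitch these identities together using the partition structure. The only thing that is not already available from the discussion preceding the statement is the positivity of each $S_i(t)$, which is needed before one can take logarithms and differentiate; this will be the main technical point.

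First, I would establish the mass decomposition $\rho(t)=\sum_{i=1}^{N}\rho_i(t)$. By \eqref{supp n} and hypothesis \eqref{condition O}, $\supp(n(t,\cdot))=X(t,\supp(n^0))\subset \mathcal{U}$ for every $t\geq 0$. Since assumptions (i) and (ii) say that the $\mathcal{O}_i$ are pairwise disjoint and cover $\mathcal{U}$ up to a Lebesgue-null set, and since $n(t,\cdot)\in L^1(\R^d)$ is absolutely continuous with respect to Lebesgue measure, integrating $n(t,\cdot)$ over $\mathcal{U}$ reduces to the sum of its integrals over the $\mathcal{O}_i$, which is exactly $\sum_i \rho_i(t)$. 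Taking $t=0$ in assumption (iii), $\supp(n^0)\cap \mathcal{O}_i\neq\emptyset$; since $\mathcal{O}_i$ is open and $\supp(n^0)$ is the closure of $\{n^0>0\}$, the continuity of $n^0$ yields a nonempty open subset of $\mathcal{O}_i$ on which $n^0>0$, hence $\rho_i(0)>0$.

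Next, I would prove that $S_i(t)>0$ for all $t\geq 0$. Performing the change of variable $y=Y(t,x)$ in the definition \eqref{Si} of $S_i$, using Liouville's formula \eqref{det(X(t,y))} for the Jacobian together with the identity \eqref{X(s,Y(t,x))} and the substitution $s\mapsto t-s$ inside the exponent, the two contributions of $\dive f$ cancel since $\tilde r+\dive f=r$, giving
\begin{equation*}
S_i(t)=\int_{Y(t,\mathcal{O}_i)} n^0(y)\, e^{\int_0^t r(X(s,y))\,ds}\,dy.
\end{equation*}
Now $Y(t,\cdot)$ is a $\mathcal{C}^1$-diffeomorphism, so $Y(t,\mathcal{O}_i)$ is open, and assumption (iii) rewritten as $Y(t,\mathcal{O}_i)\cap \supp(n^0)\neq\emptyset$ shows, as in the previous paragraph, that $n^0$ is strictly positive on a nonempty open subset of $Y(t,\mathcal{O}_i)$. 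Since $r>0$, the integrand is strictly positive there, and $S_i(t)>0$ follows.

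Once $S_i>0$ is known, the ODE is immediate: integrating the semi-explicit expression \eqref{n(t,x)} over $\mathcal{O}_i$ gives $\rho_i(t)=S_i(t)\, e^{-\int_0^t \rho(s)\,ds}$, as in \eqref{rho_E}; taking logarithms, differentiating, and multiplying back by $\rho_i(t)$ yields $\dot\rho_i(t)=\bigl(R_i(t)-\rho(t)\bigr)\rho_i(t)$ with $R_i=\dot S_i/S_i$. The differentiability of $S_i$ is granted by the regularity assumptions on $f$, $r$ and $n^0$ together with the compactness of $\supp(n^0(Y(t,\cdot)))$. The only delicate step, as indicated above, is the strict positivity of $S_i$: condition (iii) is a purely topological nonempty-intersection condition, while $S_i(t)>0$ is measure-theoretic, and bridging the two really requires the openness of $\mathcal{O}_i$ together with the continuity of $n^0$.
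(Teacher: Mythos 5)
Your proposal is correct and follows essentially the same route as the paper: reduce the statement to (a) the strict positivity of each $S_i$, obtained from hypothesis (iii) rewritten as $\supp(n^0)\cap Y(t,\O_i)\neq\emptyset$ together with the openness of $\O_i$ and the continuity of $n^0$, and (b) the decomposition $\rho=\sum_i\rho_i$, obtained from \eqref{condition O} and hypotheses (i)--(ii), after which the log-differentiation argument of \eqref{dot rho E} applies verbatim. The only cosmetic difference is that you pass to the variable $y=Y(t,x)$ before arguing positivity, whereas the paper argues directly that $\{x\in\O_i: n^0(Y(t,x))>0\}$ has positive measure; the two are equivalent.
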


\begin{rem}
Note that a sufficient condition for the third condition (iii) to hold is the following: for any $i\in \{1, ...,N\}$, there exists $x_i$ in the closure of $\O_i$ such that $f(x_i)=0$ and $n^0(x_i)>0$.
\end{rem}

\begin{proof}
As a consequence of the discussion at the beginning of this section, it is enough to prove that
\begin{enumerate}
\item For all $i\in \{1,..., N\}$ and all $t\geq 0$, \quad  $S_i(t)>0$
\item For all $t\geq 0$, \quad  $\rho(t)=\sum\limits_{i=1}^N{\rho_i(t)}$.
%\item For all $i\in \{1,..., N\}, \quad \rho_i(0)>0$. 
\end{enumerate}
%The first point is ensured by the hypothesis $X\big(t, \supp \left(n^0\right)\big)\cap \O_i\neq \emptyset$  for all $i$ and all $t$, which is equivalent to $\supp \left(n^0\right)\cap Y(t, \O_i) \neq \emptyset$, %according to the definition of $S_i$, and by the fact that $\O_i$ is an open set, which ensures, thanks to the continuity of $n^0$,  that $\{x\in \O_i : n^0(Y(t,x))>0\}$ has a positive measure for all $t\geq 0$. Since $\rho_i(0)=S_i(0)$, we also infer $\rho_i(0)>0$.

First, notice that hypothesis $(iii)$ is equivalent to
%$X\big(t, \supp \left(n^0\right)\big)\cap \O_i\neq \emptyset$, which is equivalent to 
$\supp \left(n^0\right)\cap Y(t, \O_i) \neq \emptyset$ for all $i\in \{1,..., N\}$ and all $t\geq 0$. Moreover, $\O_i$ is an open set, which ensures, thanks to the continuity of $n^0$,  that $\{x\in \O_i : n^0(Y(t,x))>0\}$ has a positive measure for all $t\geq 0$. This proves the first point by definition of $S_i$. Since $\rho_i(0)=S_i(0)$, we also infer $\rho_i(0)>0$.

The second point is due to hypothesis \eqref{condition O}: Indeed, for any $t\geq 0$, according to the semi-explicit expression of $n$ provided by \eqref{n(t,x)}, $n(t,x)= 0$ if  $Y(t,x)\notin \mathrm{supp}(n^0)$ \textit{i.e.} if $x\notin X(t, \mathrm{supp}(n^0))$, which ensures that
\[\rho(t)=\int_{\R^d}{n(t,x)dx}=\int_{\U}{n(t,x)dx}.\]
The first two hypotheses satisfied by the sets $\O_i$ ensure that $\rho(t)=\sum\limits_{i=1}^N{\rho_i(t)}$. 

\textbf{Proof of the remark:}
Let  $x_i$ be a root of $f$. A classical ODE result ensures that for all $t\geq 0$, $x\in \R^d$, $\lVert Y(t,x)-x_i\rVert\leq e^{Lt} \lVert x-x_i \rVert$, with $L>0$ the Lipschitz constant of $f$. 
Since $n^0(x_i)>0$ and $n^0$ is continuous, there exists $\e>0$ such that $B(x_i, \e)\subset \text{supp}(n^0)$. Let $t\geq 0$, $x\in \O_i\cap B(x_i, \e e^{-Lt}/2)$ (such a point does exist, by definition of the closure). Then, $Y(t,x)\in B(x_i, \e) \subset \supp(n^0)$, which ensures that $x\in X \big(t, \supp \left(n^0\right)\big)$, and thus concludes the proof. 
\end{proof}
In the one-dimensional case, assuming that $f$ has a finite number of roots, an efficient choice for the sets $\O_i$ is to take the segments between the roots of $f$ which interseect the support of $n^0$, as the following result shows.

\begin{lem}
Let $x\in \R$ and assume that $f:\R \to \R$ has a finite number of roots, that we denote $x_1<x_2<...<x_N$. Let us denote
\begin{align*}
\O_0:=(-\infty, x_1),\qquad 
\O_i:=(x_i, x_{i+1}), \; \; i \in \{1,..., N-1\},  \qquad
\O_N:= (x_N, +\infty), 
\end{align*}  
and, among these segments, let us consider $\O_{i_1}, ... \O_{i_{M}}$ those which have an non-empty intersection with $\supp \left(n^0\right)$. Then, the set $\U:=\overline{ \bigcup\limits_{1\leq j\leq M}{\O_{i_j}}}$ and the family of sets $\left(\O_{i_j}\right)_{1\leq j\leq M}$ satisfy the hypotheses of Proposition \ref{prop Oi}. 
\label{Oi unidim}
\end{lem}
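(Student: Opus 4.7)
The plan is to verify the three hypotheses of Proposition \ref{prop Oi} one by one, plus the inclusion \eqref{condition O}, and the main ingredient throughout is the one-dimensional fact recalled in Section \ref{section characteristic curves}: trajectories of $\dot x = f(x)$ in $\R$ are monotonic in time and, by uniqueness, cannot cross (or reach in finite time) any equilibrium. Hypotheses (i) and (ii) will be essentially cost-free. The bulk of the work is hypothesis (iii) and the inclusion \eqref{condition O}, which both rest on this trapping property of the characteristics.

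First, I would dispose of the easy parts. The intervals $(\O_{i_j})_{1\leq j\leq M}$ are open and pairwise disjoint since the original family $(\O_i)_{0\leq i\leq N}$ is, so (i) is immediate. For (ii), $\U\setminus \bigcup_{j}\O_{i_j}$ is contained in the finite set of roots $\{x_1,\ldots,x_N\}$ lying on the boundary of the chosen intervals, hence has Lebesgue measure zero.

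Next, I would prove the inclusion \eqref{condition O}, namely $X(\R_+ \times \supp(n^0))\subset \U$. Let $y\in \supp(n^0)$. Two cases arise. If $y$ is not a root of $f$, then $y$ lies in exactly one $\O_i$, and since $\O_i\cap \supp(n^0)\neq \emptyset$ it is one of the selected $\O_{i_j}$; by monotonicity of $t\mapsto X(t,y)$ and the fact that trajectories cannot cross the equilibria bounding $\O_{i_j}$, we have $X(t,y)\in \overline{\O_{i_j}}\subset \U$ for every $t\geq 0$ (in the unbounded cases $\O_0$ or $\O_N$, the trajectory may escape to $\pm\infty$ but remains in the same half-line, still contained in $\U$). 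If instead $y=x_k$ for some root, then $X(t,y)=x_k$ for all $t\geq 0$, and it suffices to show $x_k\in \U$. Since $x_k\in \supp(n^0)=\overline{\{n^0>0\}}$, there is a sequence of points of $\{n^0>0\}$ converging to $x_k$; at least one of the two adjacent open intervals $(x_{k-1},x_k)$ or $(x_k,x_{k+1})$ therefore contains points of $\supp(n^0)$, hence is one of the $\O_{i_j}$, and $x_k$ belongs to its closure.

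Finally, to prove (iii), fix $j\in \{1,\ldots,M\}$ and pick any $y_j\in \O_{i_j}\cap \supp(n^0)$. Then $X(t,y_j)\in X(t,\supp(n^0))$ for every $t\geq 0$. Since $y_j$ is not a root of $f$ (it lies in the open interval $\O_{i_j}$), the trajectory $t\mapsto X(t,y_j)$ is strictly monotonic and, by uniqueness for the Cauchy problem \eqref{X(t,y)}, cannot reach the equilibria at the endpoints of $\O_{i_j}$ in finite time; hence $X(t,y_j)\in \O_{i_j}$ for all $t\geq 0$, which provides a point in $X(t,\supp(n^0))\cap \O_{i_j}$ and establishes (iii). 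The only step that requires any care is the root case in the inclusion \eqref{condition O}, where one must exploit the definition of the support to place the equilibrium on the boundary of a selected $\O_{i_j}$; all the other verifications are immediate consequences of the monotonicity and uniqueness of one-dimensional characteristics.
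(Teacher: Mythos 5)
Your proof is correct and follows essentially the same route as the paper's: both rest on the positive invariance of each interval between consecutive roots, which follows from the monotonicity and uniqueness of one-dimensional characteristics. You are in fact somewhat more careful than the paper, which simply asserts $\supp(n^0)\subset \U$ without treating the case where a point of the support is itself a root of $f$ — a detail your argument via the definition of the support handles explicitly.
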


\begin{proof}
By applying the results stated at the end of Section \ref{section characteristic curves}, we note that for all $i\in \{1, ..., N\}$, $\O_i$ is positively invariant for the ODE $\dot x = f(x)$. Thus, for all $y\in \supp(n^0)\subset \U$, $t\geq 0$,  $X(t,y)\in \U$, which ensures that $X(\R_+\times \supp(n^0))\subset \U$. Moreover, the same results show that for all $j\in \{1,..., M\}$, $X(t, \supp(n^0)\cap \O_{i_j})\subset  \O_{i_j}$, and thus that $X(t, \supp(n^0))\cap \O_{i_j}\neq \emptyset$. The other two points are automatically satisfied, thanks to the definition of $\U$ and the sets $\O_i$. 
\end{proof}

Proposition \ref{prop Oi} shows us that $\rho$ satisfies ODE \eqref{main ODE}. Our next result shows that the long-time behaviour of this ODE depends on the long-time behaviour of the functions $R_i$. In particular, it states that if all the functions $R_i$ converge, then $\rho$ converges to the maximum of their limit. Before stating the result, we introduce some notations.

\begin{notation}
For any function $g:\R_+\to \R$, we denote:
 \[\underline{g}:=\underset{t\to +\infty}{\liminf} \; g(t) \quad { and } \quad  \overline{g}:=\underset{t\to +\infty}{\limsup} \; g(t),\]
and we say that $g$ converges to $l\in \R$ \textbf{with an exponential speed} if there exist $\delta>0$ such that 
\[ g(t)-l =\underset{t\to +\infty}{O}\left(e^{-\delta t}\right).\]
\end{notation}

\begin{prop}
The coupled system of ODEs~\eqref{main ODE} 
%\begin{align*}
%\begin{cases}
%\dot \rho_i(t)=\left(R_i(t)-\rho(t)\right)\rho_i(t)\quad  t\geq 0, \quad   i\in \{1,..., N\}\\
%\rho(t)=\sum\limits_{i=0}^N{\rho_i(t)}\quad  t\geq 0\\
%\rho_i(0)>0 \quad  i\in \{1,..., N \}\\
%\end{cases}
%\tag{\ref{main ODE}}
%\end{align*}
has the following properties: 
\begin{enumerate}[(i)]
\item For all $i\in \{1,..., N\}$ and all $t\geq 0$, $\rho_i(t)>0$. 
\item $\underline{\rho}\geq \underset{1\leq i\leq N}{\min}\; \left(\underline{R_i}\right)\quad $ and $\quad \overline{\rho}\leq \underset{1\leq i\leq N}{\max}\; \left(\overline{R_i}\right)$. 
\item Let $j\in \{1,..., N\} $. If there exists $i\in \{1,..., N\} $ such that $\overline R_j < \underline{R_i}$, then $\rho_j(t)\ul 0$. 
\item Let us assume that there exists $l\in \R_+\cup \{+\infty\}$, and a non empty set $I\subset \{1,..., N\}$ (where potentially $I=\{1,..., N\}$) such that for all $i\in I$, $R_i(t)\ul l$, and $\overline R_j<l$ for all $j\notin I$. Then, $\rho(t)\ul l$. 
\item Under the hypotheses of \textit{(iv)}, if moreover $0<l<+\infty$ and for all $i\in I$ the function $R_i$ converges to $l$ with an exponential speed, then $\rho$ converges to $l$ with an exponential speed. 
\end{enumerate}
\label{properties ODE}
\end{prop}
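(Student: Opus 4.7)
The plan is to address the five points in order, the key observation being that each $\rho_i$ satisfies an ODE that is linear in $\rho_i$, while summation produces a non-autonomous logistic-type equation for $\rho$.
For (i), the equation $\dot \rho_i = (R_i - \rho) \rho_i$ integrates as $\rho_i(t) = \rho_i(0) \exp\bigl(\int_0^t (R_i(s) - \rho(s))\, ds\bigr)$, which is positive since $\rho_i(0) > 0$ by assumption.
For (ii), summing yields $\dot \rho = \sum_i R_i \rho_i - \rho^2$, and since $\min_i R_i(t) \leq \bigl(\sum_i R_i(t) \rho_i(t)\bigr)/\rho(t) \leq \max_i R_i(t)$, one obtains the sandwich $(\min_i R_i - \rho)\rho \leq \dot \rho \leq (\max_i R_i - \rho) \rho$. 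For any $\e > 0$ there is $T$ such that $\min_i R_i(t) \geq \alpha - \e$ on $[T, +\infty)$ with $\alpha := \min_i \underline{R_i}$; comparison with the autonomous logistic $\dot u = (\alpha - \e - u) u$, whose positive solutions converge to $\max(\alpha - \e, 0)$, then gives $\underline{\rho} \geq \max(\alpha - \e, 0)$, and letting $\e \to 0$ yields $\underline{\rho} \geq \alpha$. The upper bound is symmetric.

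For (iii), I compute $\frac{d}{dt} \ln(\rho_j / \rho_i) = R_j - R_i$; by hypothesis there exist $\delta > 0$ and $T \geq 0$ such that $R_j - R_i \leq -\delta$ on $[T, +\infty)$, so $\rho_j / \rho_i = O(e^{-\delta t})$, and since $\rho_i \leq \rho$ is bounded by Lemma~\ref{bounds rho}, $\rho_j \to 0$ exponentially. For (iv) with $l$ finite, (iii) applied with any $i \in I$ shows that $\rho_j \to 0$ for every $j \notin I$, so $\varepsilon(t) := \rho(t) - \rho_I(t) \to 0$ where $\rho_I := \sum_{i \in I} \rho_i$. Summing the equations for $i \in I$ yields $\dot \rho_I = (g(t) - \rho_I) \rho_I$ with
\[
g(t) := \frac{\sum_{i \in I} R_i(t) \rho_i(t)}{\rho_I(t)} - \varepsilon(t),
\]
namely a $\rho_i$-weighted convex combination of the $R_i$ ($i \in I$) minus $\varepsilon(t)$; hence $g(t) \to l$. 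Applying the logistic comparison of (ii) to this single scalar equation then gives $\rho_I \to l$, so $\rho \to l$. The case $l = +\infty$ is analogous: for any $M > 0$, eventually $g(t) \geq M$, whence $\liminf \rho_I \geq M$.

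For (v), the exponential rates propagate through these steps: $\rho_j = O(e^{-\delta_1 t})$ in (iii), so $\varepsilon = O(e^{-\delta_1 t})$; and since each $R_i - l = O(e^{-\delta_i t})$ for $i \in I$, the function $h(t) := g(t) - l$ satisfies $h(t) = O(e^{-\delta_2 t})$. It remains to show that $\psi := \rho_I - l$ decays exponentially, given $\dot \psi = (h(t) - \psi)(l + \psi)$. Since $\psi \to 0$ by (iv) and $l > 0$, eventually $|\psi| \leq l/2$, so $l + \psi \in [l/2, 3l/2]$, and Young's inequality on the cross term yields
\[
\tfrac{d}{dt}\psi^2 = -2 \psi^2 (l + \psi) + 2 \psi h(t) (l + \psi) \leq -\tfrac{l}{2} \psi^2 + C \, h(t)^2
\]
for some $C > 0$; Gronwall's lemma then gives $\psi^2(t) = O(e^{-\delta t})$ for some $\delta > 0$, and therefore $\rho - l = \psi + \varepsilon$ decays exponentially.

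The main obstacle is this last step: one must carefully justify that the analysis is carried out in the asymptotic regime where $\rho_I$ is already close to $l$ (which itself rests on (iv)), and then combine the exponential rate produced by Gronwall with the various rates inherited from the sources $R_i - l$ and $\varepsilon$. The other steps are essentially bookkeeping once the crucial viewpoint --- $\rho$ as a non-autonomous logistic equation whose carrying capacity is a weighted average of the $R_i$ --- is adopted.
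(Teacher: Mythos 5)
Your proposal is correct and follows essentially the same strategy as the paper's proof: integrating factor for positivity, logistic comparison for the two-sided bound, the log-ratio $\frac{d}{dt}\ln(\rho_j/\rho_i)=R_j-R_i$ for extinction of dominated components, reduction to the subsystem indexed by $I$, and a Gr\"onwall estimate on $(\rho_I-l)^2$ for the exponential rate. The only (harmless) variations are that in (iv) you collapse the subsystem into a single scalar logistic equation with carrying capacity given by the $\rho_i$-weighted average of the $R_i$, where the paper instead re-applies point (ii) to the reduced system with $\tilde R_i:=R_i-\rho_J$, and in (v) you absorb the cross term via Young's inequality where the paper bounds it directly by $C'e^{-\delta t}$.
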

\begin{proof}
\begin{enumerate}[(i)]
\item According to the first line of ODE \eqref{main ODE}, $\rho_i(t)=e^{\int_0^t{R_i(s)-\rho(s)ds}}\rho_i(0)$, which is positive according to the third line. 
\item If $\underset{1\leq i\leq N}{\min}\; \left(\underline{R_i}\right) = 0$, there is nothing to prove: we assume $\underset{1\leq i\leq N}{\min}\; \left(\underline{R_i}\right) >0$ and let $m<\underset{1\leq i\leq N}{\min}\; \left(\underline{R_i}\right)$. There exists $T_m\geq 0$ such that for all $t\geq T_m$, and all $i\in \{1,..., N\}$, $R_i(t)\geq m$. Thus
\[\dot \rho(t)=\sum\limits_{i=1}^N{\dot \rho_i(t)}=\sum\limits_{i=1}^N{\left(R_i(t)-\rho(t)\right)\rho_i(t)}\geq \left(m-\rho(t)\right)\rho(t),\]
which means that $\rho$ is a super-solution of a logistic equation which converges to $m$, and thus that $\underline{\rho}\geq m$. Since this inequality holds for any $m<\underset{1\leq i\leq N}{\min}\; \left(\underline{R_i}\right)$ it proves that $\underline{\rho}\geq \underset{1\leq i\leq N}{\min}\; \left(\underline{R_i}\right)$. By proceeding in the same way with the limit superior, we get the second inequality. 
\item Let $i, j \in \{1,..., N\} $  such that $\overline R_j<\underline{R_i}$. The latter inequality is written with the convention that if $\underline{R_i} =+\infty$, then $\overline R_j \in \R$.  Using the first point, $\rho_j, \rho_i>0$ on $\R_+$. We can compute
\[\frac{d}{dt}\ln \left( \frac{\rho_i(t)}{\rho_j(t)}\right)=R_i(t)-R_j(t)>\varepsilon, \]
for a certain $\varepsilon>0$ and $t$ large enough. 
Thus, $\rho(t)\geq \rho_i(t)\geq Ce^{\e t }\rho_j(t)$, for a certain constant $C>0$, which yields
\[\dot \rho_j(t)\leq \left(\underset{t>0}{\sup}\; R_j(t)-Ce^{\e t}\rho_j(t)\right)\rho_j(t),\]
with $\underset{t>0}{\sup}\; R_j(t)<+\infty $ by hypothesis,  and thus $\rho_j$ goes to zero as $t$ goes to $+\infty$. 
\item Let us denote $\rho_{J}:=\sum\limits_{j\notin I}{\rho_j}$. (This first step is not necessary in the case $I=\{1,..., N\}$). According to the previous property, $\rho_{J}$ converges to zero. By denoting $\tilde{R_i}:=R_i-\rho_{J}$, we can thus rewrite system \eqref{main ODE} as:
\begin{align*}
\begin{cases}
\dot \rho_i(t)=\left(\tilde{R_i}(t)-\rho_I(t)\right)\rho_i(t)\quad \forall t\geq 0, \quad \forall i\in I\\
\rho_I(t)=\sum\limits_{i\in I}{\rho_i(t)}\quad \forall t\geq 0\\
\rho_i(0)>0 \quad \forall i \in \{1,..., N\}
\end{cases}. 
\end{align*}
Applying Property \textit{(ii)} to this new system proves the desired result, since 
\[\underset{i\in I}{\min}\; \left(\underline{\tilde{R_i}}\right)= \underset{i\in I}{\max}\; \left(\overline{\tilde{R}}_i\right)=l.\] 
\item Let $l\in (0, +\infty)$. According to the previous point, $\rho$ is bounded by two positive constants (and so is~$\rho_I)$, that we denote $\rho^m<\rho^M$. Using the same argument as in the proof of the third point, one proves that for all $j\notin I$, there exists $\e>0$ such that $\rho_J(t)\leq Ce^{-\varepsilon t} \rho^M$, and thus that $\rho_J$ converges to $0$ with an exponential speed. Thus, it remains to prove that the convergence of $\rho_I$ to $l$ also occurs with an exponential speed.  By hypothesis, there exists $C, \delta>0$ such that for all $t\geq 0$, $\sum\limits_{i\in I}{\lvert \tilde{R}_i(t)-l\rvert} \leq Ce^{-\delta t}$. Thus, by denoting $C':=C \lVert \rho_I(\cdot)-l \rVert_{\infty} \,\rho^M$, we find
%Let us denote $\tilde{\rho}_I(t):=\rho_I(t)-l$. This function satisfies
%\begin{align*}
%\dot{\tilde \rho}_I(t)=\sum\limits_{i\in I}{(\tilde{R}_i(t)-l-\tilde{\rho}_I(t))\rho_i(t)}, 
%\end{align*}
%and thus
%\[\rho^m(-Ce^{-\delta t}-\rho_I(t))\leq \dot{\tilde \rho}_I(t)\leq \rho^M(Ce^{-\delta t} -\rho_I(t)), \]
%since $\sum\limits_{i\in I}{\lvert \tilde{R}_i-l(t)\rvert} \leq Ce^{-\delta t}$,
%which concludes the proof, according to Grönwall's lemma. 
\begin{align*}
\frac{d}{dt}\frac{1}{2}\big( \rho_I(t)-l \big)^2 &= (\rho_I(t)-l) \sum\limits_{i\in I}{((\tilde{R}_i(t)-l)-({\rho}_I(t)-l))\rho_i(t)} \\
&\leq C' e^{-\delta  t }- \rho^m \left( \rho_I(t)-l \right)^2, 
\end{align*}
which concludes the proof, according to Grönwall's lemma. 
\end{enumerate}
\end{proof}

%{\color{red}
%\begin{rem}
%Let us assume that $S\in \mathcal{C}^\infty(\R_+)$, and that $S^{(k)}$ is positive for any  $k\in \N$, and let us denote $R_k:=\frac{S^{(k+1)}}{S^{(k)}}$ Then, since $R(t)=\frac{S'(t)}{S(t)}$, on can prove, by induction, that for every $k\in \N$, $R_k$ is a solution of the non-autonomous logistic equation 
%\[R_k'(t)=\left(R_{k+1}(t)-R_k(t)\right) R_k(t). \]
%Thus,  if there exists $k\in \N$ such that  $R_k$ converges to a certain $l\in \R$, then $R$ also converges to $l$. 
%\end{rem}
%}

\section{Results in the one-dimensional case}
\label{section unidimensional}
\subsection{Asymptotic behaviour of the carrying capacities}
\label{section Ri}
As evidenced by the previous section  and in particular by Proposition \ref{properties ODE}, the long-time behaviour of $\rho$ is completely determined by that of the functions $R_i$, which we call \textbf{carrying capacities} by analogy with the logistic equation. As their definition suggests, computing the limit of these functions is a delicate issue: this section is dedicated to these computations. The multidimensional case seems out of reach with this method, because, as we shall see, we use a change of variable that requires to be working in 1D. 

In order to simplify the notations, we will now denote $R$ instead of $R_\E$ or $R_i$, when there is no ambiguity as to which sets we are working with.  We are thus interested in the asymptotic behaviour of the function 
\begin{align}
R(t)=\frac{{\dot S(t)}}{S(t)}, \quad \text{with}\quad  S(t)=\int_{\E}{n^0(Y(t,x))e^{\int_0^{t}{\tr(Y(s,x))ds}}dx},  
\label{R(t)}
\end{align}
where  $\E\subset \R^d$ is an open set which satisfies $\supp(n^0)\cap Y(t, \E)\neq \emptyset$ for all $t\geq 0$. 

First, let us note that for all $l\in \R$, 
\begin{align}
R(t)-l=\frac{\frac{d}{dt}\left( S(t)e^{-lt}\right)}{S(t)e^{-lt}}. 
\label{R(t)-l}
\end{align}
Thus, in order to prove that $R$ converges to $l\in \R$ with an exponential speed, it in enough to prove that:
\begin{enumerate}[(a)]
\item $\underset{t\to +\infty}{\liminf}\,S(t)e^{-lt}>0.$
\item $ t\mapsto e^{\delta t}\frac{d}{dt}\left( S(t)e^{-lt}\right)$ is bounded for a certain $\delta >0$. 
\end{enumerate}
Indeed, we immediately deduce from \eqref{R(t)-l}, and the fact that $S$ is positive, according to its definition \eqref{Si}, that these two hypotheses imply that for any $\delta'\in (0, \delta)$,
\[ R(t)-l = \underset{t\to +\infty}{O}\left(e^{-\delta't}\right).  \]
\subsubsection{Integral formulae for the carrying capacities}
\label{expressions S}
This section aims at listing several alternative formulae of $S$. In the following section, we will use one or the other, depending on the studied case. 

We recall that $S$ is defined as 
\begin{align}
S(t)=\int_{\E}{n^0(Y(t,x))e^{\int_0^t{\tr(Y(s,x))ds}}dx}, 
\label{S init}
\end{align}
with $\tr:=r-\dive f$.

As seen in the first section, for any $t\geq 0$, $x\mapsto Y(t,x)$ is a $C^1$-diffeomorphism  from $\E$ to $Y(t,\E)$. Thus, the change of variable $y=Y(t,x)$, and Liouville's formula which ensures that $\lvert \det (\textrm{Jac}(Y(t,x))) \rvert= e^{\int_0^t{-\dive \; f(Y(s,x))ds}} $ provide a second expression for $S$, namely
\begin{align}
S(t)=\int_{Y(t,\E)}{n^0(y)e^{\int_0^t{r(X(s,y))ds}}dy}.
\label{Expression S y} 
\end{align}
Moreover, \textbf{in the one-dimensional case $x\in \R$}, if $\E$ is an interval on which $f\neq 0$, then for all $y\in \E$, $t\mapsto X(t,y)$ is also a $C^1$-diffeomorphism from $(0, t)$ to $(y, X(t,y))$ or $(X(t,y), y)$. This allows us to make the change of variable $s'=Y(s,x)$ and $s'=X(s,y)$ in the two expressions for $S$, thereby obtaining two new formulations
\begin{align}
S(t)=\int_{\E}{n^0(Y(t,x))e^{\int_{Y(t,x)}^{x}{\frac{\tr(s)}{f(s)}ds}}dx}=\int_{Y(t, \E)}{n^0(y)e^{\int_y^{X(t,y)}{\frac{r(s)}{f(s)}ds}}dy}, 
\label{S(t) Y(t,x)}
\end{align}
and, in the same way, for all $l\in \R$, 
\begin{align}
S(t)e^{-lt}=\int_{\E}{n^0(Y(t,x))e^{\int_{Y(t,x)}^{x}{\frac{\tr(s)-l}{f(s)}ds}}dx}=\int_{Y(t, \E)}{n^0(y)e^{\int_y^{X(t,y)}{\frac{r(s)-l}{f(s)}ds}}dy}.
\label{S uni}
\end{align}
Likewise, by differentiating expressions \eqref{S init} and \eqref{Expression S y}, we are led to several formulae for $\frac{d}{dt}\left(S(t)e^{-lt}\right)$, namely 
\begin{align}
\frac{d}{dt}\left(S(t)e^{-lt}\right)=\int_\E{m(Y(t,x))e^{\int_0^t{\tr(Y(s,x))-l\,ds}}dx}=\int_\E{m(y)e^{\int_0^t{r(X(s,y))-l\,ds}}dx},
\label{S prime}
\end{align}
with 
\begin{align}
m(y):=n^0(y)\left(\tr(y)-l\,\right)-f(y){n^0}'(y).
\label{m}
\end{align}
\textbf{In the one-dimensional case $x\in \R$}, assuming that $\E$ is an interval in which $f\neq 0$, we get the additional expressions 
\begin{align}
\frac{d}{dt}\left(S(t)e^{-lt}\right)=\int_\E{m(Y(t,x))e^{\int_{Y(t,x)}^x{\frac{\tr(s)-l}{f(s)}\,ds}}dx}=\int_\E{m(y)e^{\int_y^{X(t,y)}{\frac{r(s)-l}{f(s)}\,ds}}dy}.
\label{S prime unidim}
\end{align}
Lastly, in the particular one-dimensional case where $\E$ is an interval such that $Y(t,\E)=\E$  for all $t\geq 0$,  and $f\neq 0$ on $\E$, (which is the case if $\E$ is an interval delimited by two consecutive roots of $f$) one can differentiate~\eqref{Expression S y} to get
\begin{align}
\frac{d}{dt}\left(S(t)e^{-lt}\right)=\int_\E{n^0(y)\left(r(X(t,y))-l\right)e^{\int_0^t{r(X(s,y))-l\; ds}} dy}
\label{simple derivating y}
\end{align}
and the second expression of \eqref{S uni} to get 
\begin{align}
\frac{d}{dt}\left(S(t)e^{-lt}\right)=\int_{\E}{n^0(y)\left(r(X(t,y))-l\right)e^{\int_y^{X(t,y)}{\frac{r(s)-l}{f(s)}\,ds}}dy}=\int_\E{n^0(Y(t,x))(r(x)-l)e^{\int_{Y(t,x)}^x{\frac{\tr(s)-l}{f(s)}\,ds}}dx}.
\label{S prime 2}
\end{align}

\subsubsection{An important estimate}
The lemma stated in this section will be crucial in computing limits of the relevant parameter-dependent integrals in the next section. 

\begin{notation}
Let $x_0\in \R\cup\{\pm \infty\}$, and $h$ and $g$ be two functions defined in the neighbourhood of $x_0$. 
If there exist $C_1, C_2>0$ such that 
\[C_1\lvert g(x)\rvert \leq \lvert h(x)\rvert \leq C_2\lvert g(x)\rvert \]
for any $x$ close enough to $x_0$, we write
\[h(x)=\underset{x\to x_0}{\Theta}(g(x)).\]
\end{notation}

\begin{rem}
According to the definition of $\Theta$, is is clear that for any $x_0\in \R \cup \{\pm \infty\}$, $g,h$ defined in the neighbourhood of $x_0$, $f$ such that $h(x)=\underset{x\to x_0}{\Theta}(g(x))$, $h$ is integrable near $x_0$ if and only if $g$ is integrable near~$x_0$. 
\end{rem}

\begin{lem}
Let $x_0, y\in \R$, with $x_0\neq y$, and let  $\beta \in C^2([x_0, y])$ such that $\beta(y)=0$, $\beta'(y)\neq 0$ and $\beta \neq 0$ on $[x_0,y)$, and $\alpha\in C^1([x_0,y])$. 
Then, 
\[e^{\int_{x_0}^x{\frac{\alpha(s)}{\beta(s)}ds}}=\underset{x\to y}{\Theta}\left(\lvert y-x\rvert^{\frac{\alpha(y)}{\beta'(y)}}\right).\]
\label{alpha f}
\end{lem}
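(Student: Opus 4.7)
The plan is to expand $\alpha/\beta$ near $y$ to separate off the logarithmic singularity explicitly, and then show the remainder is integrable so that exponentiating produces precisely the claimed power.

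First, I would define the remainder
\[g(s):=\frac{\alpha(s)}{\beta(s)}-\frac{\alpha(y)}{\beta'(y)(s-y)}=\frac{\alpha(s)\beta'(y)(s-y)-\alpha(y)\beta(s)}{\beta(s)\beta'(y)(s-y)}\]
on $[x_0,y)$ (assuming $x_0<y$; the case $x_0>y$ is symmetric). Using $\alpha\in C^1$ and $\beta\in C^2$, the Taylor expansions $\alpha(s)=\alpha(y)+\alpha'(y)(s-y)+o(s-y)$ and $\beta(s)=\beta'(y)(s-y)+O((s-y)^2)$ imply that the numerator is $O((s-y)^2)$ while the denominator equals $(\beta'(y))^2(s-y)^2+O((s-y)^3)$. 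Since $\beta'(y)\neq 0$, the quotient $g$ stays bounded near $y$, and since $\beta$ does not vanish on $[x_0,y)$, $g$ is continuous on $[x_0,y)$. Hence $g$ is integrable on $[x_0,y]$, and in particular $\int_{x_0}^{x}g(s)\,ds$ converges, as $x\to y$, to a finite limit, so $e^{\int_{x_0}^{x}g(s)\,ds}$ converges to a positive finite constant.

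Second, I would integrate the singular part explicitly:
\[\int_{x_0}^{x}\frac{\alpha(y)}{\beta'(y)(s-y)}\,ds=\frac{\alpha(y)}{\beta'(y)}\bigl(\ln|x-y|-\ln|x_0-y|\bigr).\]
Adding the two contributions and exponentiating yields
\[e^{\int_{x_0}^{x}\frac{\alpha(s)}{\beta(s)}\,ds}=|x_0-y|^{-\alpha(y)/\beta'(y)}\,|y-x|^{\alpha(y)/\beta'(y)}\,e^{\int_{x_0}^{x}g(s)\,ds}.\]
The first factor is a positive constant and the last factor has a positive finite limit as $x\to y$, so there exist $0<C_1\leq C_2$ such that $C_1|y-x|^{\alpha(y)/\beta'(y)}\leq e^{\int_{x_0}^{x}\alpha/\beta}\leq C_2|y-x|^{\alpha(y)/\beta'(y)}$ for $x$ in a one-sided neighbourhood of $y$, which is exactly the $\Theta$ estimate.

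The only delicate point is the boundedness of $g$ near $y$: this is essentially a second-order Taylor argument that crucially uses $\beta\in C^2$, $\beta'(y)\neq 0$, and $\alpha\in C^1$. Once this is established, the rest is a direct computation with $\int ds/(s-y)$. If the hypothesis on $x_0$ versus $y$ is reversed, one simply swaps the limits of integration; the absolute value $|y-x|$ absorbs the sign change and the result is unchanged.
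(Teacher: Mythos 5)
Your proof is correct and follows essentially the same route as the paper's: both subtract off the exact singular term $\alpha(y)/(\beta'(y)(s-y))$, use the Taylor expansions of $\alpha$ and $\beta$ at $y$ to show the remainder is bounded (hence has bounded integral up to $y$), and then integrate the logarithmic part explicitly and exponentiate. Your write-up is simply a more detailed version of the paper's argument, with the added (harmless) observation that $\int_{x_0}^x g$ actually converges rather than merely staying bounded.
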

\begin{proof}
According to the regularity of $\alpha$ and $\beta$, for all $s\in (x_0, y)$, 
\[\alpha(s)=\alpha(y)+O(s-y),\quad \text{and } \quad\beta(s)= (s-y)\beta'(y)+O((s-y)^2) \]
Thus, 
\begin{align*}
\frac{\alpha(s)}{\beta(s)}-\frac{\alpha(y)}{(s-y)\beta'(y)}=\frac{\alpha(s)(s-y)\beta'(y)-\alpha(y)\beta(s)}{\beta(s)(s-y)\beta'(y)}=\frac{O(s-y)^2}{\beta'(y)^2(s-y)^2+O((s-y)^3)}=O(1).
\end{align*}
Hence, 
\begin{align*}
e^{\int_{x_0}^x{\frac{\alpha(s)}{\beta(s)}ds}}=e^{\int_{x_0}^x{\frac{\alpha(y)}{\beta'(y)}\frac{1}{s-y}+O(1)ds}}=e^{O(1)}\lvert y-x\rvert^{\frac{\alpha(y)}{\beta'(y)}}, 
\end{align*}
which proves the result of this lemma. 
\end{proof}

\subsubsection{Asymptotic behaviour of the carrying capacity in one dimension} 
\label{subsection R real}
We here focus on the one-dimensional case. 
We recall that we assume that $n^0\in \mathcal{C}_c(\R)$, $f\in C^2(\R)\cap \mathrm{Lip}(\R)$, $r\in C^1(\R)\cap L^1(\R)$, and that $r(x)$ goes to $0$ as $x$ goes to $\pm \infty$  In this section, we further assume that $f\in \mathrm{BV}(\R)$, \textit{i.e} $f' \in L^1(\R)$, and that $f$ converges to a non-zero limit at $\pm \infty$. 

In order to apply Proposition~\ref{properties ODE} (as explained in Lemma \ref{Oi unidim}), the most insightful division is to consider each segment between the roots of $f$. Hence, we must first compute the limit of the function $R$ when the chosen set $\E$ is such a segment. 
%delimited by two consecutive roots (or one root and $-\infty$ or $+\infty$. The following proposition does so in near full generality.

To be more precise, we must therefore distinguish between several cases, depending on whether the considered interval is bounded (delimited by two consecutive roots of $f$) or not (delimited by the smallest or the greatest root of $f$), and the sign of the derivative at these boundary roots. 

In fact, when $n^0$ vanishes at a given root $a$, the limit may depend on \textit{how fast} $n^0$ vanishes, \textit{i.e.} on the value $\alpha >0$ such that $n^0(y)$ vanishes like $(y-a)^\alpha$. For our method of proof to accommodate  this case, we will need to make a slightly stronger assumption involving the derivative of $n^0$.

We will see in the next section that a slight change in the limit of $R$ may have a drastic impact on the long-time behaviour of $n$.  We also deal with cases where $f$ does not have any root (which ensures, as one might expect, that $R$ converges to $0$), and the case where $f$ is zero on a whole interval. Hence, this result can be seen as a generalisation of the one stated in~\cite{lorenzi2020asymptotic}. 

\begin{prop}
In each case, we assume that $\E\cap \supp (n^0)\neq \emptyset$. \begin{enumerate}[(i)]
\item If $\E=(a,+\infty)$, $f<0$ on $\E$, $f(a)=0$ and $f'(a)<0$, then $R$ converges to $r(a)$.
\item If $\E=(-\infty, a)$, $f>0$ on $\E$, $f(a)=0$ and $f'(a)<0$, then  $R$ converges to $r(a)$.
\item If $\E=(a,+\infty)$, $f>0$ on $\E$,  $f(a)=0$, $f'(a)>0$, then
\begin{itemize}
\item If $n^0(a)>0$, then
\begin{itemize}
\item If $r(a)-f'(a)>0$,  then $R$ converges to $r(a)-f'(a)$. 
\item If $r(a)-f'(a)<0$,  then $R$ converges to $0$. 
\end{itemize}
\item If $n^0(a)=0$, and if there exist $C, \alpha>0$ such that ${n^0}'(y)=C \alpha (y-a)^{\alpha-1}+ \underset{y\to a^+}{O}((y-a)^\alpha)$, then
\begin{itemize}
\item If $r(a)-(1+\alpha)f'(a)>0$, then $R$ converges to $r(a)-(1+\alpha)f'(a)$.
\item If $r(a)-(1+\alpha)f'(a)<0$, then $R$ converges to $0$. 
\end{itemize}
\item If $n^0(a)=0$, and if there exists $\e>0$ such that $n^0(\cdot)=0$ on $[a, a+\e]$, then $R$ converges to $0$. 
\end{itemize}
\item If $\E=(-\infty,a)$, $f<0$ on $\E$,  $f(a)=0$, $f'(a)>0$, then
\begin{itemize}
\item If $n^0(a)>0$, then
\begin{itemize}
\item If $r(a)-f'(a)>0$,  then $R$ converges to $r(a)-f'(a)$. 
\item If $r(a)-f'(a)<0$,  then $R$ converges to $0$. 
\end{itemize}
\item If $n^0(a)=0$, and if there exist $C, \alpha>0$ such that ${n^0}'(y)=-C \alpha (a-y)^{\alpha-1}+ \underset{y\to a^-}{O}((a-y)^\alpha)$, then
\begin{itemize}
\item If $r(a)-(1+\alpha)f'(a)>0$, then $R$ converges to $r(a)-(1+\alpha)f'(a)$.
\item If $r(a)-(1+\alpha)f'(a)<0$, then $R$ converges to $0$. 
\end{itemize}
\item If $n^0(a)=0$, and if there exists $\e>0$ such that $n^0(\cdot)=0$ on $[a-\e, a]$, then $R$ converges to $0$.
\end{itemize}
\item If $\E=(a,b)$, $f>0$ on $(a,b)$, $f(a)=f(b)=0$,  $f'(a)>0$, $f'(b)<0$, then 
\begin{itemize}
\item If $n^0(a)>0$, then
\begin{itemize}
\item If $r(b)>r(a)-f'(a)$, then $R$ converges to $r(b)$.
\item If $r(b)<r(a)-f'(a)$, then $R$ converges to $r(a)-f'(a)$.
\end{itemize}
\item If $n^0(a)=0$, and if there exist $C, \alpha>0$ such that ${n^0}'(y)=C \alpha (y-a)^{\alpha-1}+ \underset{y\to a^+}{O}((y-a)^\alpha)$, then
\begin{itemize}
\item If $r(b)>r(a)-(\alpha+1)f'(a)$, then $R$ converges to $r(b)$. 
\item If $r(b)<r(a)-(\alpha+1)f'(a)$, then $R$ converges to $r(a)-(\alpha+1)f'(a)$. 
\end{itemize}
\item If $n^0(a)=0$, and if there exists $\e>0$ such that $n^0(\cdot)=0$ on $[a, a+\e]$, then $R$ converges to $r(b)$.
\end{itemize}
\item If $\E=(a,b)$, $f<0$ on $(a,b)$, $f(a)=f(b)=0$,  $f'(a)<0$, $f'(b)>0$, then 
\begin{itemize}
\item If $n^0(b)>0$, then
\begin{itemize}
\item If $r(a)>r(b)-f'(b)$, then $R$ converges to $r(a)$.
\item If $r(a)<r(b)-f'(b)$, then $R$ converges to $r(b)-f'(b)$.
\end{itemize}
\item If $n^0(b)=0$, and if there exist $C, \alpha>0$ such that ${n^0}'(y)=-C \alpha (b-y)^{\alpha-1}+ \underset{y\to b^-}{O}((b-y)^\alpha)$, then
\begin{itemize}
\item If $r(a)>r(b)-(\alpha+1)f'(b)$, then $R$ converges to $r(a)$. 
\item If $r(a)<r(b)-(\alpha+1)f'(b)$, then $R$ converges to $r(b)-(\alpha+1)f'(b)$. 
\end{itemize}
\item If $n^0(b)=0$, and if there exists $\e>0$ such that $n^0(\cdot)=0$ on $[b-\e, b]$, then $R$ converges to $r(a)$. 
\end{itemize}
\item If $\E=\R$, and $f>0$ on $\R$, then $R$ converges to $0$. 
\item If $\E=\R$, and $f<0$ on $\R$, then $R$ converges to $0$. 
\item If $\E$ is a interval in which $f\equiv 0$, and $n^0>0$, and $\underset{\bar \E}{\arg \max}\; r=\{x_1,..., x_p\}\subset \E$, with 
\[r'(x_i)=0, \quad r''(x_i)<0\quad \text{for all } i\in \{1,..., p\}, \]
then, $R$ converges to $\bar r :=\underset{x\in \E}{\max}\; r(x)$. 
\end{enumerate}
Moreover, except in this last case, $R$ converges with an exponential speed whenever it does not converge to~$0$. 
\label{limit Ri}
\end{prop}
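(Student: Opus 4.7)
The strategy hinges on the equivalence stemming from~\eqref{R(t)-l}: establishing ``$R(t)\to l$ with exponential speed'' amounts to proving
(a) $\underset{t\to+\infty}{\liminf}\, S(t)e^{-lt}>0$ and
(b) $\frac{d}{dt}(S(t)e^{-lt}) = O(e^{-\delta t})$ for some $\delta>0$.
The candidate $l$ is identified as the maximum asymptotic growth rate of the integrand of $S(t)e^{-lt}$ among the endpoints of $\E$. All claims then follow by carefully choosing which of the integral representations of Section~\ref{expressions S} to use at each endpoint, and applying Lemma~\ref{alpha f} to the resulting singular integrals.

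For each endpoint $\bar x$ of $\E$ that is a root of $f$, I would determine its contribution as follows. When $\bar x$ is stable and reached by forward characteristics, I use the representation~\eqref{S uni} in the variable $y$, so that $X(t,y)\to\bar x$; Lemma~\ref{alpha f} applied to the exponent $\int_y^{X(t,y)} \frac{r(s)-l}{f(s)}\,ds$ together with the exponential rate $|\bar x-X(t,y)|=\Theta(e^{f'(\bar x)t})$ recalled in Section~\ref{section characteristic curves} yields an integrand behaving like $e^{(r(\bar x)-l)t}$. When $\bar x$ is unstable with $n^0(\bar x)>0$, I would use the representation~\eqref{S uni} in the variable $x$, so that $Y(t,x)\to\bar x$; Lemma~\ref{alpha f} now gives the rate $e^{(\tr(\bar x)-l)t}=e^{(r(\bar x)-f'(\bar x)-l)t}$, the extra $-f'(\bar x)$ coming from the fact that this formulation involves $\tr=r-\dive f$ in the exponent. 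When $n^0$ vanishes at an unstable $\bar x$ like $C(y-\bar x)^\alpha$, the assumed asymptotics of $(n^0)'$ permit me to absorb the extra $(y-\bar x)^\alpha$ through the change of variable, using $|Y(t,x)-\bar x|=\Theta(e^{-f'(\bar x)t})$; this shifts the rate to $r(\bar x)-(1+\alpha)f'(\bar x)-l$. The candidate $l$ is then the maximum of the rates obtained over all endpoints (or $0$ when this maximum is negative, thanks to the $L^1$ assumption on $r$ which controls the contributions from infinity in the unbounded cases (iii), (iv), (vii), (viii)).

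With this choice of $l$, property (a) follows by showing that a neighborhood of the maximizing endpoint contributes $\Theta(1)$ to $S(t)e^{-lt}$, while the remainder contributes exponentially less. For (b), I would use one of the representations~\eqref{S prime} or~\eqref{S prime unidim} expressing $\frac{d}{dt}(S(t)e^{-lt})$ as an integral of the source $m:=n^0(\tr-l)-f(n^0)'$. The crucial observation is that the very choice of $l$ forces $m$ to vanish at the dominant $\bar x$: if $n^0(\bar x)>0$, then $\tr(\bar x)=l$ combined with $f(\bar x)=0$ trivially gives $m(\bar x)=0$; if $n^0(\bar x)=0$ with $(n^0)'(y)\sim C\alpha(y-\bar x)^{\alpha-1}$, the expansions $n^0(y)(\tr(y)-l) \sim C(y-\bar x)^\alpha(\tr(\bar x)-l)$ and $-f(y)(n^0)'(y) \sim -C\alpha f'(\bar x)(y-\bar x)^\alpha$ cancel to leading order exactly when $l=r(\bar x)-(1+\alpha)f'(\bar x)$. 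This additional vanishing of $m$ at $\bar x$ provides, via Lemma~\ref{alpha f}, an extra factor of $|y-\bar x|$ in the integrand that translates into the required exponential decay.

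Cases (vii)--(viii) ($\E=\R$, $f$ of constant sign) and the subcases of (iii)--(vi) where the maximum rate is non-positive (giving $l=0$) are handled by direct estimation: the $L^1$ assumption on $r$ ensures that $\int_0^t r(X(s,y))\,ds$ stays bounded, hence $S$ remains bounded and $R\to 0$, possibly only at polynomial speed, as allowed by the statement. Case (ix), where $f\equiv 0$ on $\E$, sits outside this framework: characteristics are stationary, so $S(t)=\int_\E n^0(y)e^{r(y)t}\,dy$, and a Laplace-type asymptotic around each maximizer $x_i$ (using $r'(x_i)=0$, $r''(x_i)<0$) gives $R\to\bar r$, with polynomial (not exponential) speed. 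The main obstacle is the bookkeeping of these many subcases: one must match each scenario to the right integral representation from Section~\ref{expressions S} so that Lemma~\ref{alpha f} produces the precise claimed exponent, and the degenerate subcases $n^0(\bar x)=0$ depend on delicately exploiting the prescribed expansion of $(n^0)'$ to track the cancellations in $m$.
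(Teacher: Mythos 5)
Your proposal follows essentially the same route as the paper's proof: the reduction of exponential convergence of $R$ to the two properties $\liminf_{t\to+\infty}S(t)e^{-lt}>0$ and boundedness of $e^{\delta t}\frac{d}{dt}\left(S(t)e^{-lt}\right)$, the case-by-case selection of the integral representations of Section \ref{expressions S}, the use of Lemma \ref{alpha f} combined with the exponential rates of the characteristics to identify the limit $l$ at each endpoint, the cancellation in $m=n^0(\tr-l)-f\,{n^0}'$ at the dominant root (including the shift to $r(a)-(1+\alpha)f'(a)$ when $n^0$ vanishes of order $\alpha$), the dominated-convergence argument via $r\in L^1$ for the subcases converging to $0$, and Laplace's method for case (ix). The one point to tighten is that, in the $l=0$ subcases, boundedness of $S$ alone does not give $R\to 0$: as in the paper, one must also check that $S$ stays bounded away from zero (Fatou) and that $S'\to 0$ (dominated convergence), but this is implicit in your sketch and not a genuine gap.
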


\begin{proof}
As explained at the beginning of this section, whenever we show that $R$ converges with an exponential speed, we must prove successively that
\begin{enumerate}[(a)]
\item $\underset{t\to +\infty}{\liminf}\,S(t)e^{-lt}>0$
\item $ t\mapsto e^{\delta t}\frac{d}{dt}\left( S(t)e^{-lt}\right)$ is bounded for a certain $\delta >0$, 
\end{enumerate} 
where $l$ is the expected limit. By Fatou's lemma, the point (a) can be proven by showing that the integrand involved in the expression of $S$ (which depends on the chosen formula) converges pointwise to a non-negative function which is positive on a set of positive Lebesgue measure. 
Depending on the case, we will use different expressions for $S$ and $S'$ among those determined in Section \ref{expressions S}. In order to lighten the proof, we assume without loss of generality that $a=0$ and $b=1$, and we denote 
$$\tr=r-f' \quad \text{and}\quad \tr_\alpha:= \tr-\alpha f'= r-(\alpha+1)f' \quad \text{for } \alpha \in \R. $$ Moreover since the cases \textit{(ii)}, \textit{(iv)}, \textit{(vi)} and \textit{(viii)} are symmetric to the cases \textit{(i)}, \textit{(iii)}, \textit{(v)} and \textit{(vii)} respectively, we omit their proof. 
\begin{enumerate}[(i)]
\item[(i)] Note that, according to the hypotheses satisfied by $f$, for all $y\in(0, +\infty)$,  $t\mapsto X(t,y)$ converges to $0$. 
\begin{enumerate} 
\item According to \eqref{Expression S y}, 
\[S(t)e^{-r(0)t}= \int_0^{+\infty}{n^0(y)e^{\int_0^t{r(X(s,y))-r(0)ds}}dy}= \int_0^{M}{n^0(y)e^{\int_0^t{r(X(s,y))-r(0)ds}}dy},\] 
for a certain $M>0$, since $n^0$ has a compact support. Since $f'(0)<0$, there exist $C, \delta>0$ such that $X(t,y)\leq Ce^{-\delta t}$ for all $y\in [0,M]$, $t\geq 0$. This proves that for all $y\in [0,M]$, $s\mapsto r(X(s,y))-r(0)$ is integrable on $(0, +\infty)$, and thus that $y\mapsto n^0(y)e^{\int_0^{+\infty}{r(X(s,y))-r(0)ds}}$ is well-defined on $[0, M]$. Since this function is positive on a sub-interval of $[0,M]$, its integral on this segment is positive.  Moreover,  $t\mapsto n^0(y)e^{\int_0^t{r(X(s,y))-r(0)ds}}$ converges pointwise to this function.
%therefore, Fatou's lemma ensures that that $\underset{t\to +\infty}{\liminf}\,S(t)e^{-r(0)t}>0$. 
%%
\item As seen in the first point, there exist $C, \delta>0$ such that for all $y\in [0,M]$ and all $t\geq 0$, $0\leq X(t,y)\leq Ce^{-\delta t}$. 
Thus, using expression \eqref{simple derivating y}, and the mean value theorem,  
\begin{align*}
\bigg\lvert e^{\delta t}\frac{d}{dt}\left(S(t)e^{-r(0)t}\right)\bigg\rvert &=e^{\delta t }\bigg\lvert \int_0^M{n^0(y) \big(r(X(t,y))-r(0)\big) e^{\int_0^t{r(X(t,y))-r(0)ds}}dy} \bigg\rvert \\
&\leq 2 \lVert n^0 \rVert_{\infty}  \lVert r \rVert_{L^\infty(0, M)}C\int_0^M{e^{\int_0^t{\lvert r(X(s,y))-r(0) \rvert ds }}dy}\\
&\leq  \lVert n^0 \rVert_{\infty} \lVert r' \rVert_{L^\infty(0, M)}CM e^{\int_0^t{C\lVert r' \rVert_{L^\infty(0, M)}e^{-\delta s} ds}}
\end{align*}
which is bounded. 
\end{enumerate}
\item[(iii)] Note that, according to the hypothesis on $f$, for all $x,y \in (0, +\infty)$, $t\mapsto X(t,y)$ is increasing and goes to $+\infty$, and $t\mapsto Y(t,x)$ is decreasing and converges to $0$.  
\begin{itemize}
\item Let us assume that $n^0(0)>0$. We distinguish two cases:
\begin{itemize}
\item \textbf{Case $r(0)-f'(0)>0$:} 
\begin{enumerate}[(a)]
\item According to \eqref{S uni}, 
$$S(t)e^{-\tr(0)t}=\int_{\E}{n^0(Y(t,x))e^{\int_{Y(t,x)}^{x}{\frac{\tr(s)-\tr(0)}{f(s)}ds}}dx}. $$
For all $x\in (0, +\infty)$, $n^0(Y(t,x))e^{\int_{Y(t,x)}^x{\frac{\tr(s)-\tr(0)}{f(s)}ds}}\underset{t\to+\infty}{\longrightarrow}n^0(0)e^{\int_{0}^x{\frac{\tr(s)-\tr(0)}{f(s)}ds}}$, which is well defined since $s\mapsto \frac{\tr(s)-\tr(0)}{f(s)}$ is continuous on $[0, x)$, thanks to the regularity of $r$ and $f$, and positive, since $n^0(0)>0$ by hypothesis. %We conclude with Fatou's lemma. 
\item Let $\delta \in \big( 0, \min(\tr(0), f'(0))\big)$. 
Since $\delta-\tr(0)<0$, $r$ goes to $0$ at $+\infty$ and $f$ is positive, we can find $M\geq 0$  such that $\frac{r(s)-\tr(0)+\delta}{f(s)}\leq 0$ for all $s\in [M, +\infty)$, and $\supp \left(n^0\right)\cap \E\subset [0, M]$. Thus, for all $t\geq 0$, and all  $y\in (0, M)$,  
\[\int_y^{X(t,y)}{\frac{r(s)-\tr(0)+\delta}{f(s)}ds}\leq \int_y^M{\bigg\lvert \frac{r(s)-\tr(0)+\delta}{f(s)} \bigg\rvert ds  }. \]
According to \eqref{S prime unidim}, 
\[e^{\delta t }\frac{d}{dt}\left(S(t) e^{-\tr(0) t }\right)=\int_{0}^{+\infty}{m(y)e^{\int_y^{X(t,y)}{\frac{r(s)-\tr(0)+\delta }{f(s)}ds}}dy}. \]
Thus, since $\supp(m)\cap \E=\supp(n^0)\cap \E\subset [0, M]$, and by  the previous inequality, 
\[\bigg\lvert e^{\delta t }\frac{d}{dt}\left(S(t) e^{-\tr(0) t }\right) \bigg\rvert\leq \int_0^M{\lvert m(y)\rvert e^{\int_y^M{ \frac{\lvert r(s)-\tr(0)+\delta \rvert }{f(s)}  ds }}dy}.  \]
Since $m(y)=n^0(y)(\tr(y)-\tr(0))-f(y){n^0}'(y)$, $\lvert m(y)\rvert= \underset{y\to 0}{O}(y)$. Moreover, since $\lvert r(0)-\tr(0)+\delta \rvert= f'(0)+\delta$, Lemme~\ref{alpha f} yields $e^{\int_y^M{\frac{\lvert r(s)-\tr(0)+\delta \rvert }{f(s)}ds}}=\underset{y\to 0}{O}(y^{-1-\delta/f'(0)})$. Therefore, 
\[\lvert m(y) \rvert  e^{\int_y^M{\frac{\lvert r(s)-\tr(0)+\delta \rvert }{f(s)}ds}}=  \underset{y\to 0}{O}(y^{-\delta/f'(0)}),   \]
and is thus integrable since $\delta<f'(0)$. 
\end{enumerate}
\item \textbf{Case $r(0)-f'(0)<0$:} in this case, we do not show that convergence occurs with an exponential speed. Thus, we do not prove the two points as before, but simply that $$\underset{t \to +\infty}{\limsup}\; S(t)>0 \quad \text{and} \quad  \underset{t\to +\infty }{\lim }S'(t)=0, $$
which will imply,  by definition of $R$ \eqref{R(t)}, that $R$ converges to $0$. 
 According to \eqref{S uni}, 
\[S(t)= \int_0^{+\infty}{n^0(y)e^{\int_y^{X(t,y)}{\frac{r(s)}{f(s)}ds}}dy}. \]
By hypothesis, $f$ converges to a positive limit. Thus, for all $y>0$, there exist $\e_y>0$ such that $f(s)> \e_y$, for all $s\geq y$. 
Thus, for all $y>0$, $\int_y^{+\infty}{\frac{r(s)}{f(s)}ds}\leq \frac{1}{\e_y} \lVert r \rVert_{L^1}<+\infty $. 
This implies that $y\mapsto n^0(y)e^{\int_y^{+\infty}{\frac{r(s)}{f(s)}ds}}$ is well defined on $\R_+$. Moreover, this function is positive at any~$y$ such that $n^0(y)>0$, hence its integral is positive. Finally, $t \mapsto n^0(y)e^{ \int_y^{X(t,y)}{\frac{r(s)}{f(s)}ds}}$ converges to this function pointwise,.%Fatou's lemma yields $\underset{t\to +\infty}{\liminf}\,S(t)>0$. 

Owing to \eqref{simple derivating y} (with $l=0$),  
\[S'(t)=\int_{\supp(n^0)}{n^0(y) r(X(t,y))e^{\int_{y}^{X(t,y)}{\frac{r(s)}{f(s)}ds}}dy}. \]
By hypothesis, there exist $\e, M>0$ such that $f(s)\geq \e$ for all $s\geq M$. Thus, for all $y>0$, 
\[\int_y^{X(t,y)}{\frac{r(s)}{f(s)}ds}\leq \int_y^{+\infty}{\frac{r(s)}{f(s)}ds}\leq \underbrace{\int_M^{+\infty}{\frac{r(s)}{f(s)}ds}}_{\leq \frac{\lVert r\rVert_{L^1}}{ \e}}+ \int_y^M{\frac{r(s)}{f(s)}ds}\; \ind_{(0, M)}(y). \]
Since $r\in L^1(\R_+)$, by hypothesis, this proves that there exists a constant $K>0$ such that for all $t\geq 0$, $y>0$, 
\[\bigg\lvert n^0(y)r(X(t,y))e^{\int_y^{X(t,y)}{\frac{r(s)}{f(s)}ds}}\bigg\rvert \leq \lVert n^0 \rVert_{\infty} \lVert r \rVert_{\infty}e^K e^{\int_y^M{\frac{r(s)}{f(s)}ds}\; \ind_{(0, M)}(y)}.\]
By virtue of Lemma \ref{alpha f}, this last quantity in integrable, since 
\[e^{\int_y^M{\frac{r(s)}{f(s)}ds}}=\underset{y\to 0}{O} \left(y^{-\frac{r(0)}{f'(0)}}\right), \]
with $r(0)<f'(0)$, by hypothesis. Moreover, since $t\mapsto r(X(t,y))$ converges to $0$ as $t$ goes to $+\infty$ for any $y>0$, $n^0(y) \, r(X(t,y)) \,e^{\int_{y}^{X(t,y)}{\frac{r(s)}{f(s)}ds}}dy$ converges to $0$ pointwise. According to the dominated convergence theorem, $S'$ thus converges to $0$. 
\end{itemize}
\item Let us assume that $n^0(a)=0$, and that the hypothesis of the theorem regarding ${n^0}'$ holds. We follow exactly the same steps and use the same formulae as in the case `$n^0(0)>0$', by adapting the computations. We distinguish again two cases.
\begin{itemize}
\item \textbf{Case $\tr_\alpha(0)>0$:} 
\begin{enumerate}[(a)]
\item According to \eqref{S uni}, 
$$S(t)e^{-\tr_\alpha(0)t}=\int_{\E}{n^0(Y(t,x))e^{\int_{Y(t,x)}^{x}{\frac{\tr(s)-\tr(0)}{f(s)}ds}}dx}. $$
For all $x\in (0, +\infty)$, 
\begin{align*}
n^0(Y(t,x))e^{\int_{Y(t,x)}^x{\frac{\tr(s)-\tr_\alpha(0)}{f(s)}ds}}&= \frac{n^0(Y(t,x))}{Y(t,x)^\alpha}e^{\int_{Y(t,x)}^x{\frac{\tr(s)-\tr(0)}{f(s)}ds}}\; Y(t,x)^\alpha e^{\int_{Y(t,x)}^x{\frac{\alpha f'(0)}{f(s)}}ds}.
\end{align*}
Let $x>0$. On the one hand,
\[\frac{n^0(Y(t,x))}{Y(t,x)^\alpha}e^{\int_{Y(t,x)}^x{\frac{\tr(s)-\tr(0)}{f(s)}ds}} \underset{t\to+\infty}{\longrightarrow}C e^{\int_{0}^x{\frac{\tr(s)-\tr(0)}{f(s)}ds}} \]
which is well defined since $s\mapsto \frac{\tr(s)-\tr(0)}{f(s)}$ is continuous on $[0, x)$, according to the regularity assumptions on $r$ and $f$, and positive. On the other hand, by rewriting
\begin{align*}
Y(t,x)^\alpha e^{\int_{Y(t,x)}^x{\frac{-\alpha f'(0)}{f(s)}}ds}&= e^{\alpha (\ln(Y(t,x))-\ln(x))}x^\alpha e^{\int_{Y(t,x)}^x{\frac{\alpha f'(0)}{f(s)}}ds}\\
&=x^\alpha e^{\int_{Y(t,x)}^x{\frac{\alpha f'(0)}{f(s)}-\frac{\alpha}{s}ds}}, 
\end{align*}
and by noting that $s\mapsto \frac{\alpha f'(0)}{f(s)}-\frac{\alpha}{s}$ is continuous at $0$, since 
\[\frac{\alpha f'(0)}{f(s)}-\frac{\alpha}{s}= \frac{\alpha f'(0)s-\alpha f(s)}{sf(s)}=\frac{\alpha f'(0)s- \alpha f'(0)s+f''(0)/2 s^2+o(s^2) }{f'(0)s^2+o(s^2)}\underset{s\to 0}\longrightarrow -\frac{\alpha f''(0)}{2f'(0)}, \]
we show that 
\[Y(t,x)^\alpha e^{\int_{Y(t,x)}^x{\frac{-\alpha f'(0)}{f(s)}}ds}\underset{t\to +\infty}{\longrightarrow} x^\alpha e^{\int_0^x{\frac{\alpha f'(0)}{f(s)}-\frac{\alpha}{s}ds}}, \]
which is also well defined, and positive.  
%We conclude this point by applying Fatou's lemma. 
\item Let $\delta \in \big( 0, \min(\tr_\alpha (0), f'(0))\big)$. 
Since $\delta-\tr_\alpha(0)<0$, $r$ goes to $0$ at $+\infty$ and $f$ is positive, we can find $M\geq 0$  such that $\frac{r(s)-\tr_\alpha(0)+\delta}{f(s)}\leq 0$ for all $s\in [M, +\infty)$, and $\supp \left(n^0\right)\subset [0, M]$. Thus, for all $t\geq 0$, and all  $y\in (0, M)$,  
\[\int_y^{X(t,y)}{\frac{r(s)-\tr_\alpha(0)+\delta}{f(s)}ds}\leq \int_y^M{\bigg\lvert \frac{r(s)-\tr_\alpha(0)+\delta}{f(s)} \bigg\rvert ds  }. \]
According to \eqref{S prime unidim}, 
\[e^{\delta t }\frac{d}{dt}\left(S(t) e^{-\tr_\alpha(0) t }\right)=\int_{0}^{+\infty}{m(y)e^{\int_y^{X(t,y)}{\frac{r(s)-\tr_\alpha(0)+\delta }{f(s)}ds}}dy}. \]
Thus, since $\supp(m)\cap \E=\supp(n^0)\cap \E\subset [0, M]$, and thanks to  the previous inequality, 
\[\bigg\lvert e^{\delta t }\frac{d}{dt}\left(S(t) e^{-\tr_\alpha(0) t }\right) \bigg\rvert\leq \int_0^M{\lvert m(y)\rvert e^{\int_y^M{ \frac{\lvert r(s)-\tr_\alpha (0)+\delta \rvert }{f(s)}  ds }}dy}.  \]
Let us prove that this integral is bounded. First, let us note that  $$m(y)=n^0(y)(\tr(y)-\tr_\alpha(0))-f(y){n^0}'(y)=\underset{y\to 0^+}O(y^{\alpha+1}) $$ 
Indeed, since $n^0(y)=Cy^{\alpha}+ \underset{y\to 0^+}{O}(y^{\alpha+1})$ and ${n^0}'(y)= C\alpha y^{\alpha-1}+ \underset{y\to 0^+}{O}(y^\alpha)$, 
\begin{align*}
\frac{\lvert m(y)\rvert }{y^{\alpha+1}}&\leq \frac{n^0(y)}{y^\alpha}\frac{\lvert \tr(y)-\tr(0)\rvert }{y}+ \frac{\lvert \alpha f'(0)n^0(y)-f(y){n^0}'(y) \rvert}{y^{\alpha+1}}\\
&\leq \frac{n^0(y)}{y^\alpha}\lVert {\tr}' \rVert_{\infty}+ \frac{\lvert C \alpha f'(0)y^{\alpha} -C \alpha f'(0) y^{\alpha} + O(y^{\alpha+1})\rvert }{y^{\alpha+1}}\\
&=\underset{y\to 0^+}{O}(1). 
\end{align*}
Moreover, according to Lemma \ref{alpha f}, since $ \lvert r(0)-\tr_\alpha(0)+\delta \rvert= (\alpha+1 )f'(0)+\delta$, $$e^{\int_y^M{\frac{\lvert r(s)-\tr_\alpha(0)+\delta \rvert }{f(s)}ds}}=\underset{y\to 0^+}{O}(y^{-\alpha-1-\delta/f'(0)}).$$
 Therefore, 
\[\lvert m(y) \rvert  e^{\int_y^M{\frac{\lvert r(s)-\tr(0)+\delta \rvert }{f(s)}ds}}=  \underset{y\to 0^+}{O}(y^{-\delta/f'(0)}),   \]
and is thus integrable since $\delta<f'(0)$. 
\end{enumerate}
\item \textbf{Case $\tr_\alpha(0)<0$:} again, we just prove that  $$\underset{t \to +\infty}{\limsup}\; S(t)>0 \quad \text{and} \quad  \underset{t\to +\infty }{\lim }S'(t)=0. $$
According to \eqref{S uni}, 
\[S(t)= \int_0^{+\infty}{n^0(y)e^{\int_y^{X(t,y)}{\frac{r(s)}{f(s)}ds}}dy}. \]
By hypothesis, $f$ converges to a positive limit. Thus, for all $y>0$, there exist $\e_y>0$ such that $f(s)> \e_y$, for all $s\geq y$. 
Hence, for all $y>0$, $\int_y^{+\infty}{\frac{r(s)}{f(s)}ds}\leq \frac{1}{\e_y} \lVert r \rVert_{L^1}<+\infty $. 
This ensures that $y\mapsto n^0(y)e^{\int_y^{+\infty}{\frac{r(s)}{f(s)}ds}}$ is well defined on $\R_+$. Moreover, this function is positive for every $y$ such that $n^0(y)>0$, which ensures that its integral is positive, and $t \mapsto n^0(y)e^{ \int_y^{X(t,y)}{\frac{r(s)}{f(s)}ds}}$ converges to this function pointwise.
%Fatou's lemma yields $\underset{t\to +\infty}{\liminf}S(t)>0$. 
%
According to \eqref{simple derivating y}, (with $l=0$),  
\[S'(t)=\int_{\supp(n^0)}{n^0(y) r(X(t,y))e^{\int_{y}^{X(t,y)}{\frac{r(s)}{f(s)}ds}}dy}. \]
By hypothesis, there exist $\e, M>0$ such that $f(s)\geq \e$ for all $s\geq M$. Thus, for all $y>0$, 
\[\int_y^{X(t,y)}{\frac{r(s)}{f(s)}ds}\leq \int_y^{+\infty}{\frac{r(s)}{f(s)}ds}\leq \underbrace{\int_M^{+\infty}{\frac{r(s)}{f(s)}ds}}_{\leq \frac{\lVert r\rVert_{L^1}}{ \e}}+ \int_y^M{\frac{r(s)}{f(s)}ds}\; \ind_{(0, M)}(y). \]
Since $r\in L^1(\R_+)$, this proves that there exist a constant $K>0$ such that for all $t\geq 0$, $y>0$, 
\[\bigg\lvert n^0(y)r(X(t,y))e^{\int_y^{X(t,y)}{\frac{r(s)}{f(s)}ds}}\bigg\rvert \leq  \lVert r \rVert_{\infty}e^K n^0(y)e^{\int_y^M{\frac{r(s)}{f(s)}ds}\; \ind_{(0, M)}(y)}.\]
By hypothesis, and according to Lemma \ref{alpha f},
\[n^0(y)=\underset{y\to 0^+}{O}(y^{\alpha}) \quad \text{and} \quad e^{\int_y^M{\frac{r(s)}{f(s)}ds}}=\underset{y\to 0^+}{O} \left(y^{-\frac{r(0)}{f'(0)}}\right). \]
Thus, 
\[n^0(y)e^{\int_y^M{\frac{r(s)}{f(s)}ds}}=\underset{y\to 0^+}{O}\left(y^{\alpha-\frac{r(0)}{f'(0)}}\right), \]
with $\alpha-\frac{r(0)}{f'(0)}>-1$.  
Moreover, since $t\mapsto r(X(t,y))$ converges to $0$ as $t$ goes to $+\infty$ for any $y>0$, $n^0(y)\, r(X(t,y))\, e^{\int_{y}^{X(t,y)}{\frac{r(s)}{f(s)}ds}}dy$ converges to $0$ pointwise. By the dominated convergence theorem, $S'$ thus converges to $0$. 
\end{itemize}
\item We can prove this point exactly as we treat the case $f>0$ on $\R$. We therefore leave it to the reader and refer to the proof of \textit{(vii)}. 
\end{itemize}
\item[(v)]  Let us note that, for any $x,y \in (0,1)$, $t\mapsto X(t,y)$ is increasing and converges to $1$, and $t\mapsto Y(t,x)$ is decreasing and converges to $0$. 
\begin{itemize}
\item Let us assume that $n^0(a)>0$. We distinguish again between two cases:
\begin{itemize}
\item \textbf{Case $r(1)>\tr(0)$:}
\begin{enumerate}[(a)]
\item Let us use the second expression \eqref{S uni} for $S$, \textit{i.e.}
\[S(t)e^{-r(1)t}=\int_{0}^{1}{e^{\int_y^{X(t,y)}{\frac{r(s)-r(1)}{f(s)}ds}}dy}. \]
 For all $y\in (0,1)$, $n^0(y)e^{\int_y^{X(t,y)}{\frac{r(s)-r(1)}{f(s)}ds}}\underset{t\to +\infty}{\longrightarrow}n^0(y)e^{\int_y^1{\frac{r(s)-r(1)}{f(s)}ds}}$, which is well-defined for all $y\in (0,1)$, since $s\mapsto \frac{r(s)-r(1)}{f(s)}$ is continuous on $(0, 1]$, and positive on a set of non-zero measure, since it is positive where $n^0$ is positive. 
 %Thus, according to Fatou's lemma, 
%\begin{align*}
%\underset{t\to +\infty}{\liminf}\;S(t)e^{-r(1)t}>0. 
%\end{align*}
%
\item Let $\delta\in \big(0,\min\left(r(1)-\tr(0), -f'(1)\right)\big)$ . 
Since $\tr(0)-r(1)+\delta<0$, there exists $m\in(0,1)$ such that $\tr(s)-r(1)+\delta$ for all $s \in (0,m]$. Thus, for all $x\in (0,1)$, $t\geq 0$,   
\begin{align*}
\int_{Y(t,x)}^{x}{\frac{\tr(s)-r(1)+\delta}{f(s)}ds}&\leq \int_{m}^x{\frac{\lvert \tr(s)-r(1)+\delta\rvert }{f(s)}ds}\; \ind_{(m,1)}(x). 
\end{align*}
Thus, using expression \eqref{S prime 2},
\begin{align*}
e^{\delta t} \frac{d}{dt}\left(S(t)e^{-r(1)t}\right)&=\int_0^1{n^0\left(Y(t,x)\right)\left(r(x)-r(1)\right)e^{\int_{Y(t,x)}^x{\frac{\tr(s)-r(1)+\delta}{f(s)}ds}}dx}\\
&\leq \lVert n^0 \rVert   \int_0^1{\lvert r(x)-r(1)  \rvert e^{\int_m^x{\frac{\lvert \tr(s)-r(1)+\delta\rvert}{f(s)}ds}\; \ind_{(m,1)}(x)}dx}<+\infty.
\end{align*}
This last integral is finite since $\lvert \tr(1)-r(1)+\delta\rvert = -f'(1)+\delta$, and thus $e^{\int_a^x{\frac{\lvert \tr(s)-\tr(0)\rvert }{f(s)}ds}}=\underset{x\to 1}O\left(\lvert x-1\rvert ^{\frac{\delta}{f'(1)}-1}\right)$, by Lemma \ref{alpha f}) $\lvert r(x)-r(1)\rvert =\underset{x\to 1}{O}\lvert x-1 \rvert $, and $\frac{\delta}{f'(1)}>-1$ by hypothesis. 
\end{enumerate}
\item \textbf{Case $\tr(0)>r(1)$:}
\begin{enumerate}[(a)]
\item Using \eqref{S uni}, we find 
\[S(t)e^{-\tr(0)t}=\int_0^1{n^0(Y(t,x))e^{\int_{Y(t,x)}^{x}{\frac{\tr(s)-\tr(0)}{f(s)}ds}}dx}. \]
For all $x\in (0,1)$, $n^0(Y(t,x))e^{\int_{Y(t,x)}^x{\frac{\tr(s)-\tr(0)}{f(s)}}ds}\underset{t\to +\infty}{\longrightarrow}{n^0(0)e^{\int_{0}^x{\frac{\tr(s)-\tr(0)}{f(s)}}ds}}$, 
which is well-defined since $s\mapsto \frac{\tr(s)-\tr(0)}{f(s)}$ is continuous on $[0, 1)$, and positive by hypothesis on $n^0$. %Thus, according to Fatou's lemma, $\underset{t\to +\infty}{\liminf}\;S(t)e^{-\tr(0)t}>0$. 
\item Let $\delta\in \big(0, \min(\tr(0)-r(1), f'(0))\big)$.
Since $r(1)-\tr(0)+\delta<0$, there exists $M\in (0,1)$ such that $r(s)-\tr(0)+\delta<0$ for all $s\geq M$. Thus, for all $y\in (0,1)$, $t\geq 0$,  $\int_y^{X(t,y)}{\frac{r(s)-\tr(0)+\delta}{f(s)}}\leq \int_y^M{\frac{\lvert r(s)-\tr(0)+\delta \rvert }{f(s)}ds}\; \ind_{(0,M)}(y)$. 
Thus, 
\begin{align*}
\bigg\lvert e^{\delta t } \frac{d}{dt}\left( S(t)e^{-\tr(0) t }\right)\bigg\rvert =\bigg\vert \int_0^1{m(y)e^{\int_{y}^{X(t,y)}{\frac{r(s)-\tr(0)+\delta}{f(s)}ds}}}\bigg\rvert &\leq  \int_0^1{\lvert m(y)\rvert e^{\int_{y}^{M}{ \frac{\lvert r(s)-\tr(0)+\delta\rvert}{f(s)}ds}\; \ind_{(0,M)}(y)}dy},
\end{align*}
which is a finite integral, since $\lvert r(0)-\tr(0)+\delta \rvert=f'(0)+\delta$, and thus $e^{\int_{y}^{b}{ \frac{\lvert r(s)-\tr(0)+\delta\rvert}{f(s)}ds}}=\underset{y\to 0}{O}\left(y^{-\frac{\delta}{f'(0)}-1}\right)$ (by Lemma \ref{alpha f}),  $m(y)=n^0(y)\left(\tr(y)-\tr(0)\right)-f(y){n^0}'(y)=\underset{y\to 0}{O}\left(y \right)$, and $\frac{\delta}{f'(0)}<1$ thanks to our choice for $\delta$. 
\end{enumerate}
\end{itemize}
\item   Let us assume that $n^0(a)=0$, and that the hypothesis on ${n^0}'$ of the theorem holds. As usual, we distinguish two cases. 
\begin{itemize}
\item \textbf{Case $r(1)>\tr_\alpha(0)$:} 
\begin{enumerate}[(a)]
\item  This first point is exactly the same as in the case $n^0>0$.  Let us use the second expression~\eqref{S uni} for $S$, \textit{i.e.}
\[S(t)e^{-r(1)t}=\int_{0}^{1}{e^{\int_y^{X(t,y)}{\frac{r(s)-r(1)}{f(s)}ds}}dy}. \]
 For all $y\in (0,1)$, $n^0(y)e^{\int_y^{X(t,y)}{\frac{r(s)-r(1)}{f(s)}ds}}\underset{t\to +\infty}{\longrightarrow}n^0(y)e^{\int_y^1{\frac{r(s)-r(1)}{f(s)}ds}}$, which is well-defined for all $y\in (0,1)$, since $s\mapsto \frac{r(s)-r(1)}{f(s)}$ is continuous on $(0, 1]$, and positive on a set of measure non-zero, since it is positive where $n^0$ is positive. 
 %Thus, according to Fatou's Lemma, 
%\begin{align*}
%\underset{t\to +\infty}{\liminf}\;S(t)e^{-r(1)t}>0. 
%\end{align*}
%
\item Let $\delta\in \big(0,\min\left(r(1)-\tr_\alpha(0), -f'(1)\right)\big)$. 
First, let us note that we can rewrite 
\[Y(t,x)^\alpha=e^{\ln(Y(t,x))-\ln(x)}x^\alpha= x^\alpha e^{\int_{Y(t,x)}^x{-\frac{\alpha}{s}}ds}. \]
Thus, by using expression \eqref{S prime 2}, we get
\begin{align*}
e^{\delta t} \frac{d}{dt}\left(S(t)e^{-r(1)t}\right)&=\int_0^1{n^0\left(Y(t,x)\right)\left(r(x)-r(1)\right)e^{\int_{Y(t,x)}^x{\frac{\tr(s)-r(1)+\delta}{f(s)}ds}}dx}\\
&=\int_0^1{\frac{n^0(Y(t,x))}{Y(t,x)^\alpha}x^\alpha(r(x)-r(1))e^{\int_{Y(t,x)}^x{\frac{\varphi(s)}{f(s)}ds}}dx}, 
\end{align*}
with 
\[\varphi(s):=\tr(s)-r(1)+\delta-\frac{\alpha f(s)}{s}.\]
By hypothesis on $n^0$, $f$ and $r$,  $ \tilde{n^0}:y\mapsto\frac{n^0(y)}{y^\alpha}$ and $\varphi$ are both continuous on $[0,1]$. Moreover, since $\varphi(0)=\tr_\alpha(0)-r(1)+\delta<0$, there exists $\e\in (0,1)$ such that $\varphi(s)<0$ for all $s\in [0, \e]$. Thus, 
\[\bigg\lvert e^{\delta t} \frac{d}{dt}\left(S(t)e^{-r(1)t}\right)\bigg\rvert \leq \lVert \tilde{n^0} \rVert_\infty \int_0^1{\lvert r(x)-r(1) \rvert x^\alpha e^{\int_{\e}^x{\frac{\lvert \varphi(s)\rvert }{f(s)}ds}\ind_{(\e,1)}(x)}dx}. \]
since $\lvert \varphi(1)\rvert =\delta-f'(1)$, Lemma \ref{alpha f} yields 
\[e^{\int_{\e}^x{\frac{\lvert \varphi(s)\rvert }{f(s)}ds}}=\underset{x\to 1}{O}(\lvert x-1\rvert ^{\frac{\delta}{f'(1)}-1}).\]
Since $\lvert r(x)-r(1) \rvert= \underset{x\to 1}{O}(x)$ and $\frac{\delta}{f'(1)}>-1$ (by hypothesis on $\delta$), this proves that this last integral is bounded. 
\end{enumerate}
\item \textbf{Case $\tr_\alpha(0)>r(1)$:}
\begin{enumerate}[(a)]
\item According to \eqref{S uni}, 
\begin{align*}
S(t)e^{-\tr_\alpha(0)t}&=\int_0^1{n^0(Y(t,x))e^{\int_{Y(t,x)}^{x}{\frac{\tr(s)-\tr_\alpha(0)}{f(s)}ds}}dx}\\
&=\int_0^1{\frac{n^0(Y(t,x))}{Y(t,x)^\alpha}Y(t,x)^\alpha e^{\int_{Y(t,x)}^x{\frac{\tr(s)-\tr_\alpha(0)}{f(s)}ds}}dx}.
\end{align*}
By rewriting $Y(t,x)^\alpha=x^\alpha e^{-\int_{Y(t,x)}^x{\frac{\alpha}{s}ds}}$, we get 
\[S(t)e^{-\tr_\alpha(0)t}= \int_0^1{\frac{n^0(Y(t,x))}{Y(t,x)^\alpha} x^\alpha e^{\int_{Y(t,x)}^x{\frac{\tr(s)-\tr_\alpha(0)}{f(s)}-\frac{\alpha}{s}ds}}dx}.  \]
Since $\frac{n^0(Y(t,x))}{Y(t,x)^\alpha} x^\alpha e^{\int_{Y(t,x)}^x{\frac{\tr(s)-\tr_\alpha(0)}{f(s)}-\frac{\alpha}{s}ds}}$ converges pointwise to $C\, x^\alpha  \, e^{\int_0^{x}{\frac{\tr(s)-\tr_\alpha(0)}{f(s)}-\frac{\alpha}{s}ds}}$, which is well-defined, since $s\mapsto \frac{\tr(s)-\tr_\alpha(0)}{f(s)}-\frac{\alpha}{s}$ is continuous at $0$ and positive, we are done.
%Fatou's lemma allows us to conclude that 
%$\underset{t\to +\infty}{\liminf}\;S(t)e^{-\tr(0)t}>0$. 
%
\item Let $\delta\in \big(0, \min(\tr_\alpha(0)-r(1), f'(0))\big)$.
Since $r(1)-\tr_\alpha(0)+\delta<0$, there exists $M\in (0,1)$ such that $r(s)-\tr_\alpha(0)+\delta<0$ for all $s\geq M$. Thus, for all $y\in (0,1)$, $t\geq 0$,  
$$\int_y^{X(t,y)}{\frac{r(s)-\tr_\alpha(0)+\delta}{f(s)}}\leq \int_y^M{\frac{\lvert r(s)-\tr_\alpha(0)+\delta \rvert }{f(s)}ds}\; \ind_{(0,M)}(y).$$ 
Hence, using expression \eqref{S prime unidim}, we get 
\begin{align*}
\bigg\lvert e^{\delta t } \frac{d}{dt}\left( S(t)e^{-\tr(0) t }\right)\bigg\rvert =\bigg\vert \int_0^1{m(y)e^{\int_{y}^{X(t,y)}{\frac{r(s)-\tr(0)+\delta}{f(s)}ds}}}\bigg\rvert &\leq  \int_0^1{\lvert m(y)\rvert e^{\int_{y}^{M}{ \frac{\lvert r(s)-\tr(0)+\delta\rvert}{f(s)}ds}\; \ind_{(0,M)}(y)}dy},
\end{align*}
which is a finite integral, since $\lvert r(0)-\tr_\alpha(0)+\delta \rvert=(1+\alpha)f'(0)+\delta$. Lemma~\ref{alpha f} leads to 
$$e^{\int_{y}^{b}{ \frac{\lvert r(s)-\tr(0)+\delta\rvert}{f(s)}ds}}=\underset{y\to 0}{O}\left(y^{-\frac{\delta}{f'(0)}-\alpha-1}\right).$$ 
The integrability follows from $m(y)=n^0(y)\left(\tr(y)-\tr_\alpha(0)\right)-f(y){n^0}'(y)=\underset{y\to 0}{O}\left(y^{\alpha+1} \right)$ (as seen previously), and $\frac{\delta}{f'(0)}<1$ thanks to our choice for $\delta$. 
\end{enumerate}
\end{itemize}
\item We prove this case with exactly the same arguments that for the case of a unique root which is asymptotically unstable. We therefore apply the proof of \textit{(i)}. 
\end{itemize}
\item[(vii)] In this case, since $f>0$, $X(t,y)\underset{t\to +\infty}{\longrightarrow} +\infty$, for all $y\in \R$. Let us prove that 
\[\underset{t\to+\infty}{\liminf}\; S(t)>0\quad \text{and} \quad \underset{t\to+\infty}{\lim} \; S(t)=0. \]
According to \eqref{S(t) Y(t,x)}, 
\[S(t)=\int_{\supp(n^0)}{n^0(y)e^{\int_y^{X(t,y)}{\frac{r(s)}{f(s)}ds}}dy}. \]
The integrand $n^0(y)e^{\int_y^{X(t,y)}{\frac{r(s)}{f(s)}ds}}$ converges pointwise to $n^0(y)e^{\int_y^{+\infty}{\frac{r(s)}{f(s)}ds}}$, which is well defined (with values in $[0, +\infty]$),  and positive for all $y\in \supp(n^0)$, since $\frac{r}{f}$ is positive.
%Thus, according to Fatou's lemma, 
%\[\underset{t\to+\infty}{\liminf}\; S(t)>0. \]
According to \eqref{S prime 2}, 
\[S'(t)=\int_{\supp(n^0)}{n^0(y)r(X(t,y))e^{\int_y^{X(t,y)}{\frac{r(s)}{f(s)}ds}}dy}. \]
Since $f$ is continuous, positive, and converges to positive constants at $\pm \infty$, $\e:=\underset{s\in \R}{\min}\; f(s)>0$. Thus, for all $y\in \R, t\geq 0$, 
\[\bigg\lvert n^0(y)r(X(t,y))e^{\int_y^{X(t,y)}{\frac{r(s)}{f(s)}ds}} \bigg\rvert\leq \lVert n^0 \rVert_\infty \lVert r \rVert_\infty e^{\frac{\lVert r \rVert_{L^1} }{\e}}<+\infty. \]
Combined with the fact that $r(X(t,y))$ converges to $0$ as $t$ goes to $+\infty$ pointwise, we deduce that $S'$ converges to $0$ by the dominated convergence theorem.
\item[(ix)] Since $f\equiv 0$ on $\E$, $Y(t,x)=x$ for all $(t, x)\in \R_+\times \E$. Thus, according to formula \eqref{Expression S y}, 
\[S(t)=\int_\E{n^0(x)e^{r(x)t}dx}\quad \text{and}\quad S'(t)=\int_\E{n^0(x)r(x)e^{r(x)t}dx}. \]
By Laplace's formula (see \cite{wong2001asymptotic}),
\[S(t)\underset{t\to +\infty}{\sim} \sqrt{2\pi} \left(\sum\limits_{i=1}^p{\frac{n^0(x_i)}{\sqrt{\lvert r''(x_i) \rvert }}} \right)\frac{e^{\bar r t}}{\sqrt{t}}\]
and 
\[ S'(t)\underset{t\to +\infty}{\sim} \sqrt{2\pi} \left(\sum\limits_{i=1}^p{\frac{n^0(x_i)r(x_i)}{\sqrt{\lvert r''(x_i) \rvert }}} \right)\frac{e^{\bar r t}}{\sqrt{t}}=\sqrt{2\pi}\; \bar r \left(\sum\limits_{i=1}^p{\frac{n^0(x_i)}{\sqrt{\lvert r''(x_i) \rvert }}} \right)\frac{e^{\bar r t}}{\sqrt{t}}\underset{t\to +\infty}{\sim} \bar r \; S(t).  \]
Thus, $R(t)=\frac{S'(t)}{S(t)}\underset{t\to +\infty}{\longrightarrow} \bar r $. 
\end{enumerate}
\end{proof}

\subsection{Applications}
\textbf{Summary of the method.}
The method that we propose in order to study the asymptotic behaviour of PDE \eqref{eq intro} can be summarised by the following three steps:
\begin{enumerate}
\item Choose an appropriate family of set $(\O_i)$ which satisfies the assumptions of Proposition \ref{condition O}, and such that we can compute the asymptotic behaviour of the functions $R_i$: a good choice when $f$ has a finite number of roots is to take the interval between the roots, as suggested in Lemma \ref{Oi unidim}. 
\item Use Proposition \ref{properties ODE} in order to determine the limit of $\rho$, and its speed of convergence when possible. 
\item Use the semi-explicit expression of $n$ provided by equation \eqref{n(t,x)}, and eventually Proposition \ref{convergence Radon} to deduce the asymptotic behaviour of $n$. 
\end{enumerate}
In each of the following subsections, we apply the three points detailed in this summary to study the asymptotic behaviour of $n$ in different cases. 

\textbf{Remark regarding the regularity of parameter functions}. As in subsection \ref{subsection R real}, we make the further assumptions that $f\in \mathrm{BV}(\R)$, and that $f$ converges to a non-zero limit at $\pm \infty$. Moreover, we easily check that all the results of this previous section remain true if we assume that $n^0$ is $\mathcal{C}^1$ on each interval between the roots of $f$, and not necessarily on the whole of $\R$. As far as $f$ is concerned, it is enough to assume that it is globally Lipschitz, and $\mathcal{C}^2$ only on a neighbourhood of its roots. It will sometimes be advisable to make these two additional assumptions: we will indicate this at the beginning of each statement whenever this is the case. 

\subsubsection{Case of a unique stable equilibrium}
We start by assuming that $f$ has a unique root (denoted $a$), which is asymptotically stable for the ODE $\dot u =f(u)$. In this case, solutions converge to a weighted Dirac mass at $a$, regardless of the functions $r$ and~$n^0$. The weight in front of the Dirac mass is determined by the value of $r$ at $a$. Note that this result can be generalised to higher dimensions, see Proposition \ref{unique stable equilibrium multidim}. 
\begin{prop}
Let us assume that $f$ has a unique root (denoted $a$), and that $f'(a)<0$. Then, $\rho$ converges to $r(a)$ and $n(t, \cdot) \underset{t\to +\infty}{\rightharpoonup}r(a)\delta_a$.
\label{unique stable}
\end{prop}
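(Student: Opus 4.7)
The plan is to run the three-step method summarised at the start of this subsection.

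First, since $f$ has the unique root $a$ with $f'(a)<0$, we have $f>0$ on $(-\infty,a)$ and $f<0$ on $(a,+\infty)$, so $a$ is globally asymptotically stable for $\dot x=f(x)$. Accordingly, I would take as the partition in Proposition~\ref{prop Oi} the intervals among $\O_-=(-\infty,a)$ and $\O_+=(a,+\infty)$ that meet $\supp(n^0)$, exactly as prescribed by Lemma~\ref{Oi unidim}.

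Next, I would apply Proposition~\ref{limit Ri} to each chosen $\O_i$: case \textit{(i)} handles $\O_+$ and case \textit{(ii)} handles $\O_-$, and both yield $R_i(t)\to r(a)$ with exponential speed. Parts \textit{(iv)}--\textit{(v)} of Proposition~\ref{properties ODE}, applied with $I=\{1,\dots,N\}$ and $l=r(a)$, then immediately give $\rho(t)\to r(a)$.

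Finally, to derive $n(t,\cdot)\uw r(a)\,\delta_a$, I would invoke Lemma~\ref{convergence Radon} with $\bar x=a$. The identity \eqref{supp n} shows $\supp(n(t,\cdot))=X(t,\supp(n^0))$; since $\supp(n^0)$ is compact and lies in the (global) basin of attraction of $a$, standard continuity of the flow gives uniform convergence $X(t,y)\to a$ for $y\in\supp(n^0)$, so every compact set $K$ not containing $a$ satisfies $\int_K n(t,x)\,dx=0$ for all $t$ large enough. For the second hypothesis of the lemma, I would take $V_a$ to be any compact neighbourhood of $a$ containing the convex hull of $\supp(n^0)\cup\{a\}$; by \eqref{supp n} this $V_a$ contains $\supp(n(t,\cdot))$ for every $t\ge 0$, hence $\int_{V_a} n(t,x)\,dx=\rho(t)\to r(a)$, and Lemma~\ref{convergence Radon} closes the proof.

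The only nontrivial ingredient is Proposition~\ref{limit Ri}, whose two relevant cases have already been established; beyond that, the argument is a routine combination of the global asymptotic stability of $a$ with the comparison estimates of Proposition~\ref{properties ODE}, so no real obstacle remains. The mild care required is verifying the ``uniform compact support'' hypothesis of Lemma~\ref{convergence Radon}, which follows from the fact that in 1D the characteristics preserve order and so $\supp(n(t,\cdot))$ stays trapped in the convex hull of $\supp(n^0)\cup\{a\}$.
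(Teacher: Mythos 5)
Your proposal is correct and follows essentially the same route as the paper: the same decomposition into $(-\infty,a)$ and $(a,+\infty)$ via Lemma~\ref{Oi unidim}, the same appeal to Proposition~\ref{limit Ri} and Proposition~\ref{properties ODE} to get $\rho\to r(a)$, and the same conclusion via Lemma~\ref{convergence Radon}. The only cosmetic difference is that you track the forward characteristics $X(t,y)\to a$ uniformly on $\supp(n^0)$ to show the support collapses onto $\{a\}$, whereas the paper argues through the backward characteristics $Y(t,x)\to\pm\infty$ for $x\neq a$; the two are equivalent, and your explicit verification of the uniform compact-support hypothesis is a welcome extra detail.
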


\begin{proof}
We apply the three points detailed in the summary:
\begin{enumerate}
\item Let us denote $\O_1:=(-\infty, a)$, $\O_2:=(a, +\infty)$, which satisfy the assumptions of Proposition \ref{prop Oi}, by Lemma \ref{Oi unidim}. By proposition \ref{limit Ri}, $R_1$ and $R_2$ both converge to $r(a)$ (with an exponential speed). 
\item By Proposition \ref{properties ODE}, $\rho$ converges to $r(a)$ with an exponential speed. 
\item According to to the semi-explicit expression \eqref{n(t,x)}, $n(t,x)=n^0(Y(t,x))e^{\int_0^t{\tr(Y(s, x))-\rho(s)ds}}$. Let $\delta>0$. Since $\lVert Y(t,x) \rVert \ul +\infty$ for all $x\in \R^d\backslash \{a\}$, and $n^0$ has a compact support,  there exists $T_0$ such that $n(t,x)=0$ for all $t\geq T_0$, $x\in \R^d \backslash [a-\delta, a+\delta]$. Since $\rho(t)=\int_{\supp(n^0)}{n(t,x)dx}$ converges to $r(a)$, Propositions \ref{convergence Radon} allows us to conclude that $n(t,\cdot)\underset{t\to +\infty}{\rightharpoonup}r(a)\delta_a$.
\end{enumerate}
\end{proof}

\subsubsection{Case of a unique unstable equilibrium}
We now assume that $f$ has a unique root (denoted $a$) which is asymptotically unstable for the ODE $\dot u =f(u)$. Under theses hypotheses, the growth term can counterbalance the advection term: there exist two regimes of convergence, depending on how $r(a)$ and $f'(a)$ compare. 
\begin{prop}
Let us assume that $f$ has a unique root (denoted $a$), and that $f'(a)>0$, Then:
\begin{itemize}
\item If $r(a)<f'(a)$, then $\rho(t)\underset{t\to +\infty}{\longrightarrow}0$ and $n(t, \cdot) \underset{t\to +\infty}{\longrightarrow}0$ in $L^1(\R)$. 
\item  If $r(a)>f'(a)$, and $n^0(a)>0$, then $\rho(t)\underset{t\to +\infty}{\longrightarrow}r(a)-f'(a)$, and $n(t, \cdot) \underset{t\to +\infty}{\longrightarrow}{\bar n}$ in $L^1(\R)$, where 
$$\bar{n}(x):=Ce^{\int_a^x{\frac{\tr(s)-\tr(a)}{f(s)}ds}},$$
with $\tr=r-f'$ and $C$ such that $\int_{\R}{\bar{n}(x)dx}=r(a)-f'(a)$.
\end{itemize}
\label{unique unstable}
\end{prop}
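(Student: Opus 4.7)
The strategy follows the three-step method stated at the start of Section~4.2.

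\textbf{Step 1 (carrying capacities).} Since $f$ has the unique root $a$ with $f'(a)>0$, one has $f<0$ on $(-\infty,a)$ and $f>0$ on $(a,+\infty)$, so I take $\O_1=(-\infty,a)$, $\O_2=(a,+\infty)$; these satisfy the hypotheses of Proposition~\ref{prop Oi} by Lemma~\ref{Oi unidim}. Cases~(iv) and (iii) of Proposition~\ref{limit Ri}, applied to $\O_1$ and $\O_2$ respectively, yield the limits of $R_1,R_2$. When $r(a)<f'(a)$, each of the admissible regimes at $a$ forces $R_1,R_2\to 0$ (in the power-decay subcase, $r(a)<(1+\alpha)f'(a)$ since $\alpha>0$). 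When $r(a)>f'(a)$ with $n^0(a)>0$, both $R_1$ and $R_2$ converge to $r(a)-f'(a)=\tr(a)>0$ with exponential speed.

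\textbf{Step 2 (asymptotics of $\rho$).} Proposition~\ref{properties ODE}(iv)--(v) then gives $\rho(t)\to 0$ in the first case, and $\rho(t)\to r(a)-f'(a)$ with exponential rate in the second. The first case is now complete, since $n\geq 0$ together with $\|n(t,\cdot)\|_{L^1(\R)}=\rho(t)\to 0$ immediately gives $n(t,\cdot)\to 0$ in $L^1(\R)$.

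\textbf{Step 3 (pointwise limit in the second case).} Starting from the semi-explicit expression~\eqref{n(t,x)}, the change of variable $y=Y(s,x)$ (valid separately on $\O_1$ and $\O_2$ where $f$ does not vanish) leads to the decomposition
\[
\int_0^t\bigl(\tr(Y(s,x))-\rho(s)\bigr)ds \;=\; \int_{Y(t,x)}^x\frac{\tr(y)-\tr(a)}{f(y)}\,dy \;+\; \int_0^t\bigl(\tr(a)-\rho(s)\bigr)ds.
\]
Since $a$ is asymptotically stable for the reverse ODE $\dot Y=-f(Y)$ (with $a$ as its unique root), $Y(t,x)\to a$ for every $x\in\R$. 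The first integral then converges to $\int_a^x(\tr(y)-\tr(a))/f(y)\,dy$, well-defined because the integrand is bounded near $a$ (numerator and denominator vanish linearly there), while the second converges to a finite constant by the exponential convergence of $\rho$ to $\tr(a)$. Combined with $n^0(Y(t,x))\to n^0(a)$, this gives pointwise convergence of $n(t,x)$ to a positive multiple $A$ of $\bar n$.

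\textbf{Main obstacle.} The delicate step is upgrading pointwise convergence to $L^1$ convergence and identifying the multiplicative constant. My plan is to exhibit a $t$-independent integrable dominating function. Wherever $n(t,x)\neq 0$, one has $Y(t,x)\in\supp(n^0)$, a fixed compact set, so $\int_a^{Y(t,x)}(\tr-\tr(a))/f$ is bounded uniformly, and $\int_0^t(\tr(a)-\rho(s))ds$ is bounded uniformly by the exponential decay of $\rho-\tr(a)$. This gives a bound $n(t,x)\leq C\,e^{\int_a^x(\tr(s)-\tr(a))/f(s)\,ds}$, which is integrable because the integrand tends to $-\tr(a)/f(\pm\infty)$ at $\pm\infty$ with the correct sign (using $\tr(a)>0$, $f(+\infty)>0$, $f(-\infty)<0$), producing exponential decay of the bound at both ends. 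Dominated convergence then yields $n(t,\cdot)\to A$ in $L^1(\R)$, and passing to the limit in $\int_\R n(t,x)\,dx=\rho(t)$ forces $\int_\R A\,dx=r(a)-f'(a)$, which pins down the constant and identifies $A$ with $\bar n$.
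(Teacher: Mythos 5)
Your proposal is correct and follows the paper's own proof essentially step for step: the same partition $\O_1=(-\infty,a)$, $\O_2=(a,+\infty)$, the same invocation of Propositions \ref{limit Ri} and \ref{properties ODE} (with the first bullet finished by $\lVert n(t,\cdot)\rVert_{L^1}=\rho(t)\to 0$), and the same rewriting of the exponent via the change of variable $y=Y(s,x)$ followed by a pointwise limit and a $t$-independent integrable domination. The only loose point is your justification of the tail integrability of the dominating function: the claim that $\frac{\tr(s)-\tr(a)}{f(s)}$ tends to $-\tr(a)/f(\pm\infty)$ presumes $f'(s)\to 0$ at infinity, which does not follow from the standing hypotheses ($f'\in L^1(\R)$ together with $f$ convergent at $\pm\infty$ does not force $f'\to 0$); the paper instead splits $\tr=r-f'$, bounds $\int_M^x \frac{r(s)-\tr(a)}{f(s)}\,ds\leq -d(x-M)$ for large $M$, and controls $\int_M^x\frac{\lvert f'(s)\rvert}{f(s)}\,ds$ by $\lVert f'\rVert_{L^1}$ divided by a positive lower bound on $f$, which yields the same exponential decay of the bound.
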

\begin{proof}
We apply the three points detailed in the summary:
\begin{itemize}
\item Let us assume that $r(a)<f'(a)$: 
\begin{enumerate}
\item Let us denote $\O_1:=(-\infty, a)$, $\O_2:=(a, +\infty)$, which satisfy the assumptions of Proposition \ref{prop Oi}, by Lemma \ref{Oi unidim}. Proposition \ref{limit Ri} shows that $R_1$ and $R_2$ both converge to $0$. 
\item By Proposition \ref{properties ODE}, $\rho$ converges to $0$.
\item We immediately deduce from the previous point that $n(t,\cdot)\ul 0$ in $L^1(\R)$, by definition of $\rho$.  
\end{enumerate}
\item Let us assume that $r(a)>f'(a)$: 
\begin{enumerate}
\item With the same choice for $\O_1$ and $\O_2$, Proposition \ref{limit Ri} shows that $R_1$ and $R_2$ both converge to $r(a)-f'(a)$. 
\item By Proposition \ref{properties ODE}, $\rho$ converges to $\tr(a)$ with an exponential speed. 
\item By the semi-explicit expression~\eqref{n(t,x)}, 
\begin{align*}
n(t,x)=n^0(Y(t,x))e^{\int_0^t{\tr(Y(s,x) )-\rho(s)ds}}&=n^0(Y(t,x))e^{\int_0^t{\tr(Y(s, x))-\tr(a)ds}}e^{\int_0^t{\tr(a)-\rho(s)ds}}\\
&=n^0(Y(t,x))e^{\int_{Y(t,x)}^x{\frac{\tr(s)-\tr(a)}{f(s)}ds}}e^{\int_0^t{\tr(a)-\rho(s)ds}}
\end{align*}
(we use the change of variable $s'=Y(s,x)$ in the first integral to get this last expression). 
Thus, $n(t,\cdot)$ converges pointwise to
 $$x\mapsto n^0(a)e^{\int_a^x{\frac{\tr(s)-\tr(a)}{f(s)}ds}}e^{\int_0^{+\infty}{\tr(a)-\rho(s)ds}}, $$
which is well-defined, since $\rho$ converges to $\tr(a)$ with an exponential speed, $f>0$ on $(a, +\infty)$ and $s\mapsto \frac{\tr(s)-\tr(a)}{f(s)}$ is continuous at $a$. 
 
Moreover, since $r(x)\underset{x\to + \infty}{\longrightarrow}0$, and $f$ converges to a positive limit, there exist $M, d>0$ such that $\frac{{r(s)-\tr(a)}}{f(s)}<-d$ for all $s\geq M.$ Thus, for all $t\geq 0$, $x\in (a, +\infty)$, 
\begin{align*}
\int_{Y(t,x)}^x{\frac{\tr(s)-\tr(a)}{f(s)}ds}&\leq \underbrace{\int_a^M{\frac{\lvert \tr(s)-\tr(a)\rvert }{f(s)}ds}}_{:= C_1}+\int_M^x{\frac{\tr(s)-\tr(a)}{f(s)}ds} \; \ind_{(M, +\infty)}(x)\\
&\leq C_1 + \int_M^x{\underbrace{\frac{r(s)-\tr(a)}{f(s)}}_{\leq -d}ds} \; \ind_{(M, +\infty)}(x) + \int_M^x{\frac{f'(s)}{f(s)}ds}\; \ind_{(M, +\infty)}(x)\\
&\leq C_1-d(x-M)\ind_{(M, +\infty)} +\underbrace{\int_M^{+\infty}{\frac{\lvert f'(s) \rvert }{f(s)}ds}}_{:= C_2}, 
\end{align*}
with  $C_1, C_2<+\infty$, by the regularity of $\tr$, $f\in BV(\R)$, and the fact that $f$ converges to a positive constant at infinity. 

By proceeding in the same way for all $x\leq a$, we show that for all $x\in \R$, $t\geq 0$,  
%\[n(t,x)\leq  C e^{-d \lvert x  \rvert \ind_{\R\backslash [-M, M]}(x)}   \]
\[n(t,x)\leq  C e^{-d \lvert x  \rvert}\]
for some constants $ C, d>0$, which ensures, according to the dominated convergence theorem, that 
$t\mapsto n(t, \cdot)$ converges to $x\mapsto n^0(a)e^{\int_a^x{\frac{tr(s)-\tr(a)}{f(s)}ds}}e^{\int_0^{+\infty}{\tr(a)-\rho(s)ds}}$ in $L^1(\R)$. 
%Since  $\rho(t)=\lVert n(t, \cdot) \rVert_{L^1}$ converges to $\tr(a)$, the constant  $e^{\int_0^{+\infty}{\tr(a)-\rho(s)ds}}$ is such the integral of this limit function is equal to $\tr(a)$. 
\end{enumerate}
\end{itemize}
\end{proof}

\subsubsection{Two equilibria}
In this section we assume that $f$ has exactly two roots, $a<b$, which satisfy $f'(a)>0$ and $f'(b)<0$ (hence $f>0$ on $(a,b)$). The case $f'(a)<0$, $f'(b)>0$, $f<0$ on $(a,b)$  is similar.  Depending on the functions $r$ and $n^0$, $n$ will either converge to a function in $L^1$, or converge to a Dirac mass at $b$. We split this result into two propositions: the first one assumes that the support of $n^0$ crosses $a$, which means that $n^0>0$ in a neighbourhood of $a$. The second one assumes that $\supp(n^0)\subset [a, +\infty)$, and we consider the case where $n^0(a)=0$, which leads to other regular functions being reached. 
\begin{prop}
Let us assume that $f$ has exactly two roots, $a<b$, which satisfy $f'(a)>0$, $f'(b)<0$, and that $n^0(a)>0$.  Then: 
\begin{itemize}
\item If $r(b)>r(a)-f'(a)$, then $\rho(t) \ul r(b)$ and $n(t, \cdot)\underset{t\to +\infty}{\rightharpoonup}r(b)\delta_b$. 
\item If $r(b)<r(a)-f'(a)$, then $\rho(t)\ul r(a)-f'(a)$, and $n(t, \cdot)\ul \bar n$ in $L^1(\R)$, where 
\[\bar n(x):= De^{\int_a^x{\frac{\tr(s)-\tr(a)}{f(s)}ds}}\ind_{(-\infty, b)}, \] 
with $\tr=r-f'$, and $D>0$ is such that $\int_{\R}{\bar n(x)dx}=r(a)-f'(a)$.
\end{itemize}
\label{2 equilibria first}
\end{prop}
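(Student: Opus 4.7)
I will apply the three-step recipe summarised at the beginning of Section \ref{section unidimensional}. Following Lemma \ref{Oi unidim}, I partition the spatial domain by $\O_0 := (-\infty, a)$, $\O_1 := (a, b)$, $\O_2 := (b, +\infty)$, retaining only those that meet $\supp(n^0)$; at least $\O_0$ and $\O_1$ do, by the assumption $n^0(a) > 0$. Proposition \ref{limit Ri} then applies piecewise: case (iv) gives $R_0 \to \max(\tr(a), 0)$, case (v) gives $R_1 \to \max(r(b), \tr(a))$, and case (i) gives $R_2 \to r(b)$ (writing $\tr := r - f'$), each with exponential speed whenever the limit is positive. If $r(b) > \tr(a)$, then $\max_i \overline{R_i} = r(b)$ is attained only by $R_1$ (and $R_2$ if present), $\overline{R_0} < r(b)$, and Proposition \ref{properties ODE} (iii)–(v) yields $\rho \to r(b)$ exponentially together with $\rho_0 \to 0$; if $r(b) < \tr(a)$, then necessarily $\tr(a) > 0$, the maximum $\tr(a)$ is attained only by $R_0$ and $R_1$, so $\rho \to \tr(a)$ exponentially and $\rho_2 \to 0$.

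For the third step I use the identity $\int_{Y(t,x)}^x du/f(u) = t$, valid on each $\O_i$ (since $f$ has constant sign there), to rewrite the semi-explicit expression \eqref{n(t,x)} as
\begin{equation*}
n(t, x) = n^0(Y(t, x))\, \exp\left(\int_{Y(t, x)}^x \frac{\tr(u) - \tr(a)}{f(u)}\, du\right)\, \exp\left(\int_0^t (\tr(a) - \rho(s))\, ds\right).
\end{equation*}
In the subcase $r(b) > \tr(a)$, the last factor is $O(e^{-\delta t})$ for some $\delta > 0$ thanks to the exponential convergence of $\rho$, while the middle factor is uniformly bounded on any $[a, b-\eta]$ (its integrand being continuous there); hence $\int_a^{b-\eta} n(t, x)\, dx$ vanishes exponentially. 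Combined with $\rho_0 \to 0$ and with the fact that characteristics starting in $\supp(n^0) \cap (b, +\infty)$ contract into $(b, b+\eta)$ in finite time, so that $\int_{b+\eta}^{+\infty} n(t, x)\, dx = 0$ eventually, this shows that the mass of $n(t, \cdot)$ outside $[b-\eta, b+\eta]$ tends to $0$. With $\rho \to r(b)$ and the standard splitting $\int n\varphi = \int n(\varphi - \varphi(b)) + \varphi(b)\rho$ for $\varphi \in \co_c(\R)$, this gives $n(t, \cdot) \rightharpoonup r(b)\,\delta_b$.

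In the subcase $r(b) < \tr(a)$, the last factor converges to $e^K$ with $K := \int_0^{+\infty} (\tr(a) - \rho(s))\, ds$ finite. For $x \in (-\infty, a) \cup (a, b)$, $Y(t, x) \to a$, so $n^0(Y(t, x)) \to n^0(a)$ and the middle factor converges to $\exp(\int_a^x (\tr - \tr(a))/f\, du)$ (the integrand has a removable singularity at $a$); for $x > b$, $Y(t, x) \to +\infty$ and $n(t, x) = 0$ eventually. Hence $n(t, \cdot)$ converges pointwise to $n^0(a)\, e^K\, e^{\int_a^x (\tr - \tr(a))/f\, du}\, \ind_{(-\infty, b)}(x)$. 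Integrability of this limit at $-\infty$ follows from $(\tr - \tr(a))/f \to -\tr(a)/f(-\infty) > 0$, which forces exponential decay; integrability at $b^-$ follows from Lemma \ref{alpha f}, which yields the equivalent $(b-x)^{(\tr(b) - \tr(a))/f'(b)}$ with exponent strictly greater than $-1$ precisely when $r(b) < \tr(a)$. To upgrade pointwise to $L^1$-convergence, I will establish a time-uniform integrable dominant $n(t, x) \leq C\, e^{-d|x|}$, extracted from the middle factor exactly as in the proof of Proposition \ref{unique unstable}, so that the dominated convergence theorem applies. The normalisation constant $D = n^0(a) e^K$ is then forced by the identity $\rho(t) = \int n(t, x)\, dx \to \tr(a)$, which, combined with the $L^1$-limit, imposes $\int \bar n = \tr(a)$.

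The main obstacle is the uniform-in-$t$ control of $n(t, \cdot)$ outside neighbourhoods of $\{a, b\}$: the support of $n(t, \cdot)$ is not contained in a fixed compact set, because characteristics starting in $\supp(n^0) \cap (-\infty, a)$ escape to $-\infty$. In both subcases, the crux is that the middle factor $\exp(\int_{Y(t,x)}^x (\tr - \tr(a))/f\, du)$ produces pointwise decay in $x$ strong enough to dominate the spreading of the support; verifying this decay globally (and not only on $[a, b-\eta]$) is where the bulk of the remaining estimates lies.
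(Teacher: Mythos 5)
Your proposal follows essentially the same route as the paper's proof: the same partition into the intervals $(-\infty,a)$, $(a,b)$, $(b,+\infty)$ with the same limits for the carrying capacities from Proposition \ref{limit Ri}, the same application of Proposition \ref{properties ODE} to get the limit of $\rho$ and the vanishing of the mass on the losing interval, and the same rewriting of the semi-explicit formula via the change of variable $s'=Y(s,x)$ followed by dominated convergence. The only imprecision is the claimed global dominant $Ce^{-d|x|}$: this works on $(-\infty,a)$ but cannot hold near $b^-$ when $\tr(b)>\tr(a)$ (the limit profile then blows up at $b$, as noted in the remark after Proposition \ref{2 equilibria second}), so on $(a,b)$ the domination must instead be the integrable-but-unbounded bound $C\,e^{\int_a^x{\frac{\tr(s)-\tr(a)}{f(s)}\ind_{\{\tr(s)>\tr(a)\}}ds}}$ controlled by Lemma \ref{alpha f}, exactly as in the paper.
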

\begin{proof}
Note that since $n^0$ is assumed to be continuous on $\R$, $n^0>0$ on a neighbourhood of $a$. 
\begin{itemize}
\item Let us assume that $r(b)> r(a)-f'(a)$. We again follow the three points of the method outlined in the beginning of the  subsection. 
\begin{enumerate}
\item Let us denote $\O_1=(-\infty, a)$, $\O_2=(a,b)$, $\O_3=(b, +\infty)$. One easily checks that these sets satisfy the hypotheses of Proposition \ref{prop Oi}, thanks to Lemma \ref{Oi unidim}. According to Proposition \ref{limit Ri}, $R_1$ converges to $\max(0,\tr(a))<r(b)$ and  $R_2$ and $R_3$ both converge to $r(b)$ with an exponential speed. 
\item By Proposition \ref{properties ODE}, $\rho$ converges to $r(b)$ with an exponential speed, and $\rho_1(t)=\int_{-\infty}^a{n(t,x)dx}$ converges to $0$.  
\item  Let $x\in (a,b)$. Using~\eqref{n(t,x)}, we find $n(t,x)=n^0(Y(t,x))e^{\int_0^t{\tr(Y(s,x))-\rho(s))ds}}$, for all $t\geq 0$. Let $K\subset (a, b)$ be a compact set,  $\delta \in \big(0, \frac{1}{2}\left(r(b)-\tr(a)\right)\big)$, and let us denote $d:=r(b)-\tr(a)-2\delta>0$. Since $\rho$  converges to $r(b)$, and $(Y(s,x))_{s\geq 0}$ converges to $a$ uniformly on $K$, there exists $T_0$ such that for all $s\geq T_0$ and  all $x\in K$,  
\[\rho(s)\geq r(b)-\delta \quad \text{and} \quad \tr(Y(s,x))\leq \tr(a)+\delta,  \]  
Thus, 
\[\int_K{n(t,x)dx}\leq \lVert n^0 \rVert_{\infty}  \int_K{e^{\int_0^{T_0}{\tr(Y(s,x))-\rho(s)ds}}dx}\; e^{-d(t-T_0)}\ul 0. \]
Let $K'$ be a compact subset of $(b, +\infty)$. Since $n^0$ has compact support, there exists $T_0$ such that $n^0(Y(t,x))=0$ for all $t\geq T_0$, $x\in K'$. Thus, $t\mapsto \int_{K'}{n(t,x)dx}$ converges to $0$. By Proposition \ref{convergence Radon}, $n(t, \cdot) \underset{t\to +\infty}{\rightharpoonup}r(b)\delta_{b}$. 
\end{enumerate}
\item Let us now assume that $r(b)<r(a)-f(a)$. 
\begin{enumerate}
\item With the same choice for $\O_1, \O_2$ and $\O_3$, Proposition \ref{limit Ri} shows that $R_1$ and $R_2$ converge to $\tr(a)$, and that $R_3$ converges to $r(b)<\tr(a)$. 
\item We then apply Proposition \ref{properties ODE} to infer that $\rho$ converges to $\tr(a)$ with an exponential speed, and $\rho_3(t)=\int_{b}^{+\infty}{n(t,x)dx}$ converges to $0$.  
\item  Let $x\in (-\infty,b)$, $t\geq 0$.  By the semi-explicit expression \eqref{n(t,x)},  
\begin{align*}
n(t,x)&=n^0(Y(t,x))e^{\int_0^t{\tr(Y(s,x))-\rho(s))ds}}\\
&=n^0(Y(t,x))e^{\int_{Y(t,x)}^x{\frac{\tr(s)-\tr(a)}{f(s)}ds}}e^{\int_0^t{\tr(a)-\rho(s)ds}},
\end{align*}
where we used the change of variable `$s'=Y(t,x)$'. The latter function converges pointwise to
 $$n^0(a)e^{\int_a^x{\frac{\tr(s)-\tr(a)}{f(s)}ds}}e^{\int_0^{+\infty}{\tr(a)-\rho(s)ds}}.$$
As for the case of a unique unstable equilibrium (proof of Proposition \ref{unique unstable}) one can find $C,d \geq 0$ such that $n(t,x)\leq C e^{-d\lvert x \rvert }$ for all $x\leq a$. Moreover, for all $x\in (a,b)$, 
$$n(t,x)\leq \lVert n^0 \rVert_{\infty}\; e^{\int_0^{+\infty}{\lvert \tr(a)-\rho(s) \rvert ds}}\;  e^{\int_a^x{\frac{\tr(s)-\tr(a)}{f(s)}\ind_{\{\tr(s)>\tr(a)\}}(s)ds}},$$
which provides an $L^1$-domination, since $e^{\int_a^x{\frac{\tr(s)-\tr(a)}{f(s)}}}=\underset{x\to +\infty}{O}\left(\lvert x-b \rvert^{\frac{\tr(a)-r(b)}{-f'(b)}-1} \right)$, thanks to Lemma~\ref{alpha f}. By the dominated convergence theorem, combined with the fact that $\rho$ converges to $\tr(a)$ and $\rho_3$ converges to $0$, this ensures that $n(t,\cdot)$ converges to the expected limit. 
\end{enumerate}
\end{itemize}
\end{proof}
In the following proposition, we assume that $n^0$ is $\mathcal{C}^1$ on $(a,b)$ and on $(b, +\infty)$, and not necessarily on the whole of $\R$. 
\begin{prop}
Let us assume that $f$ has exactly two roots, $a<b$, which satisfy $f'(a)>0$, $f'(b)<0$, and that $\supp(n^0)\subset [a,+\infty)$. We distinguish between several cases:
\begin{itemize}
\item If $n^0(a)>0$, then
\begin{itemize}
\item If $r(b)>r(a)-f'(a)$, then $\rho(t)\ul r(b)$, and $n(t, \cdot)\ui{\rightharpoonup} r(b)\delta_b$.
\item If $r(b)<r(a)-f'(a)$, then $\rho(t)\ul r(a)-f'(a)$, and $n(t, \cdot)\ul \bar n_0$ in $L^1(\R)$, where 
\[\bar n_0(x):= D_0e^{\int_a^x{\frac{\tr(s)-\tr(a)}{f(s)}ds}}\ind_{(a, b)}, \] 
with $\tr=r-f'$, and $D_0>0$ is such that $\int_{\R}{\bar n_0(x)dx}=r(a)-f'(a)$.
\end{itemize}
\item If $n^0(a)=0$, and if there exist $C, \alpha>0$ such that ${n^0}'(y)=C\alpha (y-a)^{\alpha-1}+\underset{y\to a^+}{O}\left((y-a)^\alpha\right)$, then
\begin{itemize}
\item If $r(b)>r(a)-(1+\alpha)f'(a)$, then $\rho(t)\ul r(b)$, and $n(t, \cdot)\ui{\rightharpoonup} r(b)\delta_b$.
\item If $r(b)<r(a)-(1+\alpha)f'(a)$, then $\rho(t)\ul r(a)-(1+\alpha)f'(a)$, and $n(t, \cdot)\ul \bar n_\alpha$ in $L^1(\R)$, where 
\[\bar n_\alpha(x):= D_\alpha (x-a)^\alpha e^{\int_a^x{\frac{\tr(s)-\tr_\alpha(a)}{f(s)}-\frac{\alpha}{s-a}ds}}\ind_{(a, b)}, \] 
where $\tr=r-f'$, $\tr_\alpha= r-(1+\alpha)f'$, and $D_\alpha>0$ is such that $\int_{\R}{\bar n_\alpha(x)dx}=r(a)-(1+\alpha)f'(a)$. 
\end{itemize}
\item If $n^0(a)=0$, and if there exists $\e>0$ such that $n^0(y)=0$ for all $y\in [a, a+\e]$, then $\rho(t) \underset{t\to +\infty}{\longrightarrow}r(b)$, and $n(t, \cdot)\underset{t\to +\infty}{\rightharpoonup}r(b)\delta_b$. 
\end{itemize}
\label{2 equilibria second}
\end{prop}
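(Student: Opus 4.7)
The plan is to apply verbatim the three-step strategy summarised at the start of Section~\ref{section unidimensional}, adapting the proof of Proposition~\ref{2 equilibria first} to the support assumption $\supp(n^0) \subset [a,+\infty)$ and to the various behaviours of $n^0$ at $a$. Since $\supp(n^0) \cap (-\infty, a] \subset \{a\}$ has empty interior, Lemma~\ref{Oi unidim} tells us to take only the two sets $\O_1 := (a,b)$ and $\O_2 := (b,+\infty)$; condition (iii) of Proposition~\ref{prop Oi} holds in every subcase because the closure of $\O_1$ contains $a$ (with $n^0(a) \geq 0$ but close to it $n^0$ is eventually positive, except in the vanishing-interval case, which we handle separately) and the closure of $\O_2$ contains $b$ with $n^0 > 0$ nearby as soon as the support crosses $b$. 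In the subcase where $\supp(n^0) \subset [a,b]$, we simply drop $\O_2$ and work with $\O_1$ alone, which is sufficient since for $t$ large $\rho_2(t)$ vanishes by the same advection argument as in the proof of Proposition~\ref{2 equilibria first}.

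For Step~1, the asymptotics of the carrying capacity on $\O_2$ comes from case~(i) of Proposition~\ref{limit Ri}, giving $R_2(t) \to r(b)$ with exponential speed. On $\O_1$, I invoke case~(v) of Proposition~\ref{limit Ri}: when $n^0(a) > 0$, $R_1$ converges to $\max\{r(b), \tr(a)\}$; when $n^0$ vanishes at $a$ with ${n^0}'(y) = C\alpha (y-a)^{\alpha-1} + O((y-a)^\alpha)$, $R_1$ converges to $\max\{r(b), \tr_\alpha(a)\}$ with $\tr_\alpha := r - (1+\alpha)f'$; when $n^0$ vanishes on a right-neighbourhood of $a$, $R_1$ converges to $r(b)$. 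All convergences are exponential. Step~2 is then a direct application of Proposition~\ref{properties ODE}(iv)--(v): $\rho(t)$ converges with exponential speed to the common maximum, which is the value announced in the statement.

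For Step~3, I use the semi-explicit formula~\eqref{n(t,x)}. When $\rho(t) \to r(b)$, the arguments of Proposition~\ref{2 equilibria first} (first case) apply verbatim to show that $n(t,\cdot)$ vanishes on compact subsets of $(a,b)$ and of $(b,+\infty)$ while $\rho(t) \to r(b)$, so Lemma~\ref{convergence Radon} gives $n(t,\cdot) \rightharpoonup r(b) \delta_b$. When $\rho(t) \to r(a) - f'(a)$ in the $n^0(a)>0$ case, the proof is identical to the corresponding case in Proposition~\ref{2 equilibria first}, except that the $L^1$-limit is supported in $(a,b)$ rather than $(-\infty,b)$, reflecting the support assumption on $n^0$. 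The genuinely new case is $\rho(t) \to r(a) - (1+\alpha)f'(a) =: \tr_\alpha(a)$. Writing
\begin{align*}
n(t,x) = \frac{n^0(Y(t,x))}{(Y(t,x)-a)^\alpha} \cdot (Y(t,x)-a)^\alpha \cdot e^{\int_{Y(t,x)}^{x} \frac{\tr(s)-\tr_\alpha(a)}{f(s)}\, ds} \cdot e^{\int_0^t (\tr_\alpha(a)-\rho(s))\, ds}
\end{align*}
and then using the trick from the proof of case~(v) of Proposition~\ref{limit Ri},
\begin{align*}
(Y(t,x)-a)^\alpha = (x-a)^\alpha \, e^{-\int_{Y(t,x)}^{x} \frac{\alpha}{s-a}\, ds},
\end{align*}
the first factor converges to $C$ as $t\to+\infty$, and the exponential factors combine into $(x-a)^\alpha \exp\bigl(\int_a^x [\tfrac{\tr(s)-\tr_\alpha(a)}{f(s)} - \tfrac{\alpha}{s-a}]\, ds\bigr)$, which is the announced $\bar n_\alpha$ up to the multiplicative constant fixed by the mass condition.

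The main obstacle is the $L^1$-domination needed to upgrade pointwise convergence to $L^1$ convergence in this last case, because of the singular-looking factor $(x-a)^\alpha$ near $a$ and the potential blow-up of the integrand near $b$. For the behaviour at $a$, continuity of $\tfrac{\tr(s)-\tr_\alpha(a)}{f(s)} - \tfrac{\alpha}{s-a}$ at $s=a$ (verified by the same Taylor expansion used in the proof of case~(v) of Proposition~\ref{limit Ri}) provides a bound of order $(x-a)^\alpha$, which is integrable since $\alpha > 0$. For the behaviour at $b$, Lemma~\ref{alpha f} applied at the root $b$ of $f$ yields $e^{\int_a^x \frac{\tr(s)-\tr_\alpha(a)}{f(s)}\, ds} = \Theta\bigl(|x-b|^{(\tr_\alpha(a)-r(b))/(-f'(b)) - 1}\bigr)$ as $x \to b^-$, which is integrable precisely because the case assumption $r(b) < \tr_\alpha(a)$ makes the exponent strictly greater than $-1$. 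The dominated convergence theorem then gives the $L^1$ convergence, concluding the proof. The remaining subcase where $n^0$ vanishes on a neighbourhood of $a$ follows from the first branch, as the support of $n(t,\cdot)$ is driven uniformly away from $a$ and concentration at $b$ proceeds exactly as in Proposition~\ref{unique stable}.
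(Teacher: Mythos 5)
Your proposal follows essentially the same route as the paper's proof: the same reduction to the two intervals $(a,b)$ and $(b,+\infty)$ via Lemma~\ref{Oi unidim}, the same appeal to Propositions~\ref{limit Ri} and~\ref{properties ODE} for the limit of $\rho$, and the identical factorisation $(Y(t,x)-a)^\alpha=(x-a)^\alpha e^{-\int_{Y(t,x)}^{x}{\alpha/(s-a)\,ds}}$ combined with Lemma~\ref{alpha f} for the $L^1$-domination near $b$. The only caveat is that in the Dirac regime of the power-law case the argument of Proposition~\ref{2 equilibria first} is not quite ``verbatim'' when $\tr_\alpha(a)<r(b)<\tr(a)$: on compact subsets of $(a,b)$ one must keep the bounded factor $n^0(Y(t,x))/(Y(t,x)-a)^\alpha$ so that the exponential decay is governed by $r(b)-\tr_\alpha(a)>0$ rather than by $r(b)-\tr(a)$, which may be negative --- but the paper's own proof makes the same abbreviation.
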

\begin{proof}
Since the proof of this proposition is very similar to the one of Proposition \ref{2 equilibria first}, we do not write it in full detail, but we  simply underline the points that must be adapted. 
\begin{itemize}
\item In the case where $n^0(a)>0$, and $\supp(n^0)\subset [a, +\infty)$, the proof is the same, but by considering only the two sets $\O_2=(a,b)$, and $\O_3=(b, +\infty)$, and not $\O_1=(-\infty, a)$. We easily check using Lemma \ref{Oi unidim} that $(\O_2, \O_3)$ satisfy the hypotheses of Lemma \ref{Oi unidim}, since $\supp(n^0)\cap \O_1=\emptyset$
\item The case where $n^0(a)=0$ and the hypothesis on ${n^0}'$ holds is quite similar, except that Proposition \ref{limit Ri} now shows that $R_2$ converges to $\max \left( \tr_\alpha(a), r(b)\right)$, with an exponential speed (if $\tr_\alpha(a)\neq r(b)$). Thus, we treat the case $r(b)>\tr_\alpha(a)$ in exactly the same way; the case $r(b)<r(a)-(1+\alpha)$ is a little more intricate: recalling that for all $x\in (a,b)$, $t\geq 0$, 
$$n(t,x)=n^0(Y(t,x))e^{\int_{Y(t,x)}^x{\frac{\tr(s)-\tr_\alpha(a)}{f(s)}ds}} e^{\int_0^t{\tr_\alpha(a)-\rho(s) ds}} \quad \text{and} \quad \left(Y(t,x)-a\right)^\alpha=(x-a)^\alpha e^{-\int_{Y(t,x)}^x{\frac{\alpha}{s-a}}ds}, $$
one notes that 
$$n(t,x)= \frac{n^0(Y(t,x))}{(Y(t,x)-a)^\alpha} e^{\int_0^t{\tr_\alpha(a)- \rho(s)ds}} (x-a)^\alpha  e^{\int_{Y(t,x)}^x{\frac{\tr(s)- \tr_\alpha(a)}{f(s)}-\frac{\alpha}{s-a}ds}}$$
converges pointwise to 
\[C  e^{\int_0^{+\infty}{\tr_\alpha(a)- \rho(s)ds}} (x-a)^\alpha  e^{\int_{a}^x{\frac{\tr(s)- \tr_\alpha(a)}{f(s)}-\frac{\alpha}{s-a}ds}}, \]
which is well-defined, since $\rho$ converges to $\tr_\alpha(a)$ with an exponential speed, and $s\mapsto \frac{\tr(s)-\tr_\alpha(s)}{f(s)}-\frac{\alpha}{s-a}$ is continuous on $a$, as seen in the proof of Proposition \ref{limit Ri}. Moreover, 
\[x\mapsto \lVert \frac{n^0(\cdot)}{(\cdot-a)^\alpha} \rVert_{\infty}e^{\int_0^{+\infty}{\lvert \tr_\alpha(s)-\rho(s) \rvert ds }}\,e^{\int_a^x{\frac{\varphi(s)}{f(s)}\ind_{\{\varphi(s)\geq 0\}}ds}}, \]
with $\varphi(s)=\tr(s)-\tr_\alpha(s)-\alpha \frac{f(s)}{s-a}$ is clearly a domination of $n$, and is in $L^1$, since $\varphi(b)=r(b)-\tr_\alpha(a)-f'(b)$, which implies by Lemma \ref{alpha f} that $e^{\int_a^x{\frac{\varphi(s)}{f(s)}ds}}=\underset{x\to b^-}O\left((b-x)^{\frac{r(b)-\tr_\alpha(a)}{f'(b)}-1}\right)$, with $\frac{r(b)-\tr_\alpha(a)}{f'(b)}>0$. 
\item This last point is the simplest, and is in fact analogous to the case of a single equilibrium point. According to Proposition \ref{limit Ri}, $R_2$ and $R_3$ converge to $r(b)$: we deduce the result by following the steps of Proposition \ref{unique stable}. 
\end{itemize}
\end{proof}
\begin{rem}
Since Proposition \ref{2 equilibria second} provides a completely explicit expression for the limit functions $\bn_\alpha$, $\alpha\geq 0$, one can easily determine their asymptotic behaviour at the boundary of the segment $(a,b)$. 
Since for all $\alpha>0$, $x\in (a,b)$, 
 $$\bar n_\alpha(x)= D_\alpha (x-a)^\alpha e^{\int_a^x{\frac{\tr(s)-\tr_\alpha(s)}{f(s)}-\frac{\alpha}{s-a}ds}}, $$ 
and $s\mapsto \frac{\tr(s)-\tr_\alpha(a)}{f(s)}-\frac{\alpha}{s}$ is continuous on $[a,b)$, it is clear that $\bar n_\alpha(x)=\underset{x\to a^+}{\Theta}\left((x-a)^\alpha\right)$. In particular, $\bar n_\alpha$ can be extended by continuity at $0$, with $\bn_\alpha(a)=0$ if $\alpha>0$, and $\bn_0(a)\in (0, +\infty)$. 

Moreover, since $\tr(b)-\tr_\alpha(a)-\frac{\alpha f(b)}{b-a}= r(b)-\tr_\alpha(a)-f(b)$, Lemma \ref{alpha f} ensures that $\bn_\alpha(x)=\underset{x\to b^-}{\Theta}\left((b-x)^{\frac{r(b)-\tr_\alpha(a)}{f'(b)}-1}\right)$. In particular 
\begin{align*}
\underset{x\to b^-}{\lim} \bn_\alpha(x)= \begin{cases} 0 \; &\text{ if } \; \tr(b)<\tr_\alpha(a)\\
l>0 \;  & \text{ if } \; \tr(b)=\tr_\alpha(a)\\
+\infty \; & \text{ if } \;  \tr(b)>\tr_\alpha(a) 
\end{cases}. 
\end{align*}
These different cases are illustrated by Figure \ref{continuous profiles}. 
\end{rem}
The case where $f$ has two roots $a<b$ with $f'(a)<0$ and $f'(b)>0$ is symmetric to the cases here, and thus lead to the same results, by switching $a$ and $b$ in the Propositions. 
\begin{figure}
\centering
\includegraphics[width=12 cm]{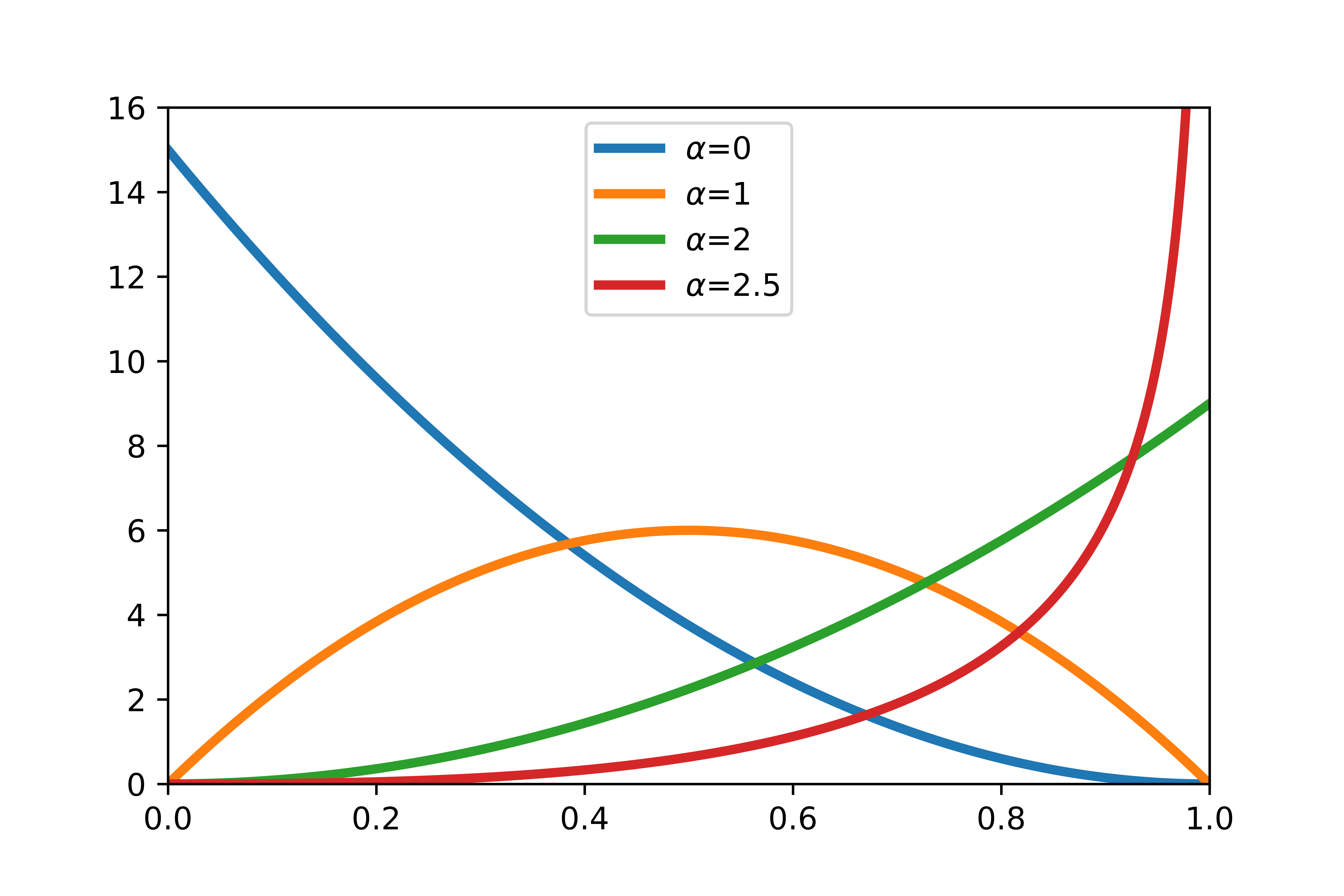}
\caption{Continuous limit functions $\bar n_\alpha$, for different values of $\alpha>0$, as defined in Proposition \ref{2 equilibria second}.  In this example, we have chosen $f(x)=x(1-x)$, and $r(x)=b-ax$ (with $b=6, a=4$). With this choice, we easily compute that, for all $\alpha \in [0, a-1)$, and all $x\in (0, 1)$ $\bar n_\alpha(x)=D_\alpha x^\alpha(1-x)^{a-\alpha-2} $, for the appropriate constant $D_\alpha$. This illustrates the variety of limit functions that can be reached depending on the initial condition, as detailed in Proposition \ref{2 equilibria second}. \\}
\label{continuous profiles}
\end{figure}

\subsection{More than two equilibria}

In this subsection, we deal with the cases where $f$ has more than two equilibria. As evidenced by the previous result, listing all possible scenarios when there are two roots already is cumbersome: this is why we will not do so in a more general case, and will focus on the case where $n^0$ is positive on the neighbourhood of the unstable equilibrium points. The other cases can of course be treated as seen above, keeping in mind that this changes the value of the limits reached by the $R$ functions.
\begin{prop}
Let us assume that $f$ has a finite number of roots, which are all hyperbolic equilibrium points for the ODE $\dot u=f(u)$, \textit{i.e.} $f'$ has a sign at each root of $f$,  and let us denote $x_{u}^1, ... , x_{u}^p$ the asymptotically unstable equilibria,  and $x_{s}^1, ..., x_{s}^1$ the asymptotically stable one. Moreover, let us denote  $M_u:=\max\{r(x_{u}^1)-f'(x_{u}^1),... r(x_{u}^p)-f'(x_{u}^p)\}$, and $M_s:= \max\{r(x_{s}^1), ..., r(x_{s}^m)\}$, and let us assume that these two maxima are both reached at a unique point. Lastly, let us assume that $n^0(x_u^i)>0$ for all $i\in \{1,..., p\}.$ 
\begin{itemize}
\item If $M_s>M_u$, then $\rho(t)\ul M_s$, and $n(t, \cdot) \underset{t\to+\infty}{\rightharpoonup}M_s \delta_{x^i_s}$, with $x_s^i$ the unique stable equilibria such that $M_s=r(x_s^i)$. 
\item %If $M_s<M_u$, then $\rho(t)\ul M_u$, and  $n$ converges in $L^1$ to a function determined by the unique  $x_{u}^i$ such that  $r(x_u^i)-f'(x_u^i)=M_u$. 
If $M_s<M_u$, then $\rho(t)\ul M_u$, and  $n(t, \cdot )\underset{t\to +\infty}{\longrightarrow} \bar n_{i^*}$  in $L^1$, where 
\[\bar n_{i^*}(x)= C_{i^*}e^{\int_{x_u^{i^*}}^x{\frac{\tr(s)-\tr(x_u^{i^*})}{f(s)}ds}}\ind_{I_{i^*}}(x), \]
with $\tr=r-f'$, $i^*$ the unique integer of $\{1,..., p\}$ such that $\tr(x_u^{i^*})=M_u$, $I_{i^*}$ the open interval delimited by the two stable equilibria which enclose $x_u^{i^*}$ (or $ -\infty$ or $+\infty$ if $x_u^{i^*}$ is the smallest or the greatest root of $f$), and $C_{i^*}$ a positive constant such that $\int_{I_{i^*}}{\bar n_{i^*}(x)dx}=M_u$. 
\end{itemize}
\label{multi equilibria}
\end{prop}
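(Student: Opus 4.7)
The plan is to follow the three-step method summarized above, generalizing the analysis of Propositions \ref{2 equilibria first} and \ref{2 equilibria second} to an arbitrary (but finite) number of equilibria. A key preliminary observation I will exploit is that, because every root of $f$ is hyperbolic, the sign of $f'$ must alternate between consecutive roots: every bounded interval between two consecutive roots therefore has exactly one asymptotically stable endpoint and one asymptotically unstable endpoint, which makes the local analysis on each interval indistinguishable from the two-equilibrium case.

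First, I will apply Lemma \ref{Oi unidim} with the family $(\O_i)$ of intervals between consecutive roots of $f$ that intersect $\supp(n^0)$. On each bounded such interval, the assumption $n^0(x_u^i)>0$ lets me invoke cases (v) or (vi) of Proposition \ref{limit Ri} to conclude that $R_i$ converges with exponential speed to $\max(r(x_s),\tr(x_u))$, where $\tr:=r-f'$. For the (at most two) unbounded intervals, cases (i)--(iv) give limits in $\{r(x_s),\max(0,\tr(x_u))\}$. The hypothesis $n^0(x_u^i)>0$ for every $i$ ensures that every unstable equilibrium, and hence every stable equilibrium adjacent to one, appears as an endpoint of an active interval, so that the overall maximum of the $R_i$-limits equals $M:=\max(M_s,M_u)$. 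The uniqueness-of-maximizer hypothesis then guarantees that this overall maximum is attained either on the (one or two) intervals adjacent to the unique maximizer $x_s^{j^*}$ of $M_s$ (when $M=M_s$), or on the (one or two) intervals making up $I_{i^*}$ (when $M=M_u$), all other $R_i$-limits being strictly smaller. Proposition \ref{properties ODE}(iv)--(v) then yields $\rho(t)\to M$ with exponential speed and $\rho_j(t)\to 0$ on every interval not realizing this maximum.

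When $M_s>M_u$, the third step mimics the first alternative of Proposition \ref{2 equilibria first}: on each interval adjacent to $x_s^{j^*}$ the characteristics $X(t,y)$ converge exponentially to $x_s^{j^*}$, so $\supp(n(t,\cdot))$ restricted to this interval eventually lies in any prescribed neighborhood of $x_s^{j^*}$; outside such neighborhoods the mass vanishes thanks either to $n^0$ having compact support or to the corresponding $\rho_j$ tending to zero. Combined with $\rho(t)\to M_s$, Lemma \ref{convergence Radon} then yields $n(t,\cdot)\rightharpoonup M_s\delta_{x_s^{j^*}}$.

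When $M_u>M_s$, I will use the semi-explicit expression \eqref{n(t,x)} rewritten, via the change of variable $s'=Y(s,x)$, as
\begin{equation*}
n(t,x)=n^0(Y(t,x))\,\exp\!\left(\int_{Y(t,x)}^{x}\frac{\tr(s)-\tr(x_u^{i^*})}{f(s)}\,ds\right)\exp\!\left(\int_0^t(\tr(x_u^{i^*})-\rho(s))\,ds\right)
\end{equation*}
for $x\in I_{i^*}\setminus\{x_u^{i^*}\}$. Since $Y(t,x)\to x_u^{i^*}$ from either side of $x_u^{i^*}$ inside $I_{i^*}$ (characteristics flow away from the unstable point) and since $\rho$ converges exponentially to $\tr(x_u^{i^*})$, the second exponential tends to a finite positive constant $C_{i^*}$, and the pointwise limit is $\bar n_{i^*}$ on $I_{i^*}$. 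To upgrade this to $L^1$ convergence I follow the blueprint of the proof of Proposition \ref{2 equilibria first}: Lemma \ref{alpha f} gives the blow-up rate $\exp(\int\cdots)=\Theta\big((x_s-x)^{(\tr(x_u^{i^*})-r(x_s))/(-f'(x_s))-1}\big)$ near each stable endpoint $x_s$ of $I_{i^*}$, and since $M_u>M_s\geq r(x_s)$ the exponent exceeds $-1$, furnishing an $L^1$ dominating function; dominated convergence then yields the $L^1(I_{i^*})$ limit, while $\rho_j\to 0$ on the remaining intervals forces $\|n(t,\cdot)\|_{L^1(\R\setminus I_{i^*})}\to 0$. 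The constant $C_{i^*}$ is finally pinned down by $\int_{\R}\bar n_{i^*}=M_u$, which follows from $\rho\to M_u$. The main technical obstacle is the control of the boundary singularities of $\bar n_{i^*}$ at the stable endpoints of $I_{i^*}$, but hyperbolicity (which forces the clean stable/unstable alternation) together with Lemma \ref{alpha f} handles this just as in the two-equilibrium case.
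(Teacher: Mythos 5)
Your overall strategy coincides with the paper's: the published proof of Proposition \ref{multi equilibria} is itself only a two-line reduction to the machinery of Proposition \ref{2 equilibria first} (decompose into the intervals between consecutive roots via Lemma \ref{Oi unidim}, compute the limits of the $R_i$ with Proposition \ref{limit Ri}, apply Proposition \ref{properties ODE}, conclude with the semi-explicit expression \eqref{n(t,x)}), and your proposal fleshes out exactly that reduction. Your identification of the maximal carrying capacities, the treatment of the $M_u>M_s$ case (pointwise limit via the change of variable $s'=Y(s,x)$, $L^1$ domination near the stable endpoints of $I_{i^*}$ through Lemma \ref{alpha f} with exponent $\frac{\tr(x_u^{i^*})-r(x_s)}{-f'(x_s)}-1>-1$, and $\rho_j\to 0$ elsewhere) are all correct and match the paper.

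There is, however, one genuinely incorrect step in your treatment of the case $M_s>M_u$: the claim that, because the characteristics $X(t,y)$ converge to $x_s^{j^*}$, the support of $n(t,\cdot)$ restricted to an adjacent interval ``eventually lies in any prescribed neighbourhood of $x_s^{j^*}$''. By \eqref{supp n}, $\supp(n(t,\cdot))=X(t,\supp(n^0))$; since $n^0>0$ near the unstable endpoint $x_u$ of that interval and $x_u$ is a fixed point of the flow, this support always contains points arbitrarily close to $x_u$ and in fact expands to fill the whole interval $(x_u,x_s^{j^*})$ — it never shrinks. (If it did shrink, the $L^1$ regime of the second bullet point could never occur, so this statement cannot be the right mechanism.) What concentrates is the \emph{mass}, not the support: for a compact $K\subset(x_u,x_s^{j^*})$ not containing $x_s^{j^*}$, one must argue as in the first alternative of Proposition \ref{2 equilibria first}, namely that $Y(s,\cdot)\to x_u$ uniformly on $K$, hence $\tr(Y(s,x))\to\tr(x_u)\leq M_u<M_s=\lim\rho$, so the exponent in $n(t,x)=n^0(Y(t,x))e^{\int_0^t(\tr(Y(s,x))-\rho(s))ds}$ tends to $-\infty$ linearly and $\int_K n(t,x)\,dx\to 0$; only then does Lemma \ref{convergence Radon} apply. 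Since you announce that you are mimicking that proof, the repair is immediate, but the justification as written would fail.
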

\begin{proof}
The proof of this proposition is in similar to that of Proposition \ref{2 equilibria first}: we denote $\O_0, ..., \O_{p+m}$, the intervals between each roots of $f$, which satisfy the hypotheses of Proposition \ref{prop Oi}, according to Lemma \ref{Oi unidim}, and, using Proposition \ref{limit Ri}, we are able to compute the limit of the function $R_i$, for all $i\in \{0, \ldots, p+m\}$, and thus determine the long-time behaviour of $\rho$ by Proposition \ref{properties ODE}. We conclude by using the semi-explicit expression~\eqref{n(t,x)} for $n$. 
\end{proof}

This method also allows to deal with the case where $f\equiv 0$ on a whole segment: we do not return to the case $f\equiv 0$ on $\R$, which has already been studied in \cite{perthame2006transport} and \cite{lorenzi2020asymptotic}, but we consider the case where $f\equiv 0$ on an interval, and then becomes positive. 

To make the assumption of the following proposition possible, we assume that $f$ is $\mathcal{C}^2$ on $(-\infty, a)$ and on $(a, +\infty)$, but not necessarily on the whole of $\R$. 
\begin{prop}
Let us assume that there exists $a\in \R$ such that $f\equiv 0$ on  $(-\infty, a)$, $f>0$ on $(a,+\infty)$, $f'(a^+ )>0$, and that $\supp(n^0)=[s^-, s^+]$, with $s^-<a<s^+$. Then, 
\begin{itemize}
\item If there exists a unique $x_M\in (s^-, a)$ such that $r(x_M)=\underset{x\in [s^-, a]}{\max}\; r(x)$, and $f''(x_M)<0$, then $\rho$ converges to $r(x_M)$, and $n(t, \cdot)\underset{t\to +\infty}{\rightharpoonup}r(x_M)\delta_{x_M}$. 
\item If $r_{\mid_{[s^-, a]}}$ reaches its maximum at $a$ (and only at $a$), then $\rho$ converges to $r(a)$, and $n(t, \cdot)\underset{t\to +\infty}{\rightharpoonup}r(a)\delta_{a}$. 
\end{itemize}
\label{f equiv 0 on a segment}
\end{prop}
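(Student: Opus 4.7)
I would follow the three-step method recapitulated at the start of Subsection~4.2. The natural partition, respecting the structure of $f$, is $\O_1 := (s^-, a)$ and $\O_2 := (a, +\infty)$. Since $f \equiv 0$ on $(-\infty, a]$ pins the flow on $[s^-, a]$ while $f > 0$ on $(a, +\infty)$ pushes $(a, s^+]$ toward $+\infty$, the characteristics keep $\supp(n(t,\cdot))$ inside $[s^-, a] \cup (a, +\infty)$. Noting that $a$ lies in the interior of $\supp(n^0) = [s^-, s^+]$, one has $n^0(a) > 0$, and one checks that $\O_1, \O_2$ satisfy the assumptions of Proposition~\ref{prop Oi}.

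\textbf{Limits of $R_1$ and $R_2$.} On $\O_2$, case~(iii) of Proposition~\ref{limit Ri} applies (with $f'(a)>0$ and $n^0(a)>0$), so $R_2(t)$ converges to $\max\bigl(r(a)-f'(a),\,0\bigr)$, a quantity strictly below $r(a)$ since $f'(a) > 0$. On $\O_1$, since $f \equiv 0$ one has $Y(t,x) = x$ and $\tr = r$, so
$$S_1(t) = \int_{s^-}^{a} n^0(x)\, e^{r(x) t}\, dx.$$
In Case~1, $x_M \in (s^-, a) = \O_1$ is an interior maximiser with $r''(x_M)<0$ (I read the statement's ``$f''(x_M)<0$'' as a typo for $r''$, since $f\equiv 0$ on this interval), so case~(ix) of Proposition~\ref{limit Ri} gives $R_1(t)\to r(x_M)$. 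In Case~2, the maximiser sits at the boundary point $a$ and case~(ix) does not apply; I would redo the Laplace asymptotics on $S_1$ by hand, obtaining $S_1(t) \sim C\, e^{r(a) t}/t^\alpha$ for some $\alpha > 0$ depending on the local behaviour of $r$ near $a$, and hence $R_1(t) = \dot S_1(t)/S_1(t) \to r(a)$.

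\textbf{Limit of $\rho$ and concentration of $n$.} In either case $\bar{R}_2 < \und{R}_1$, so Proposition~\ref{properties ODE}(iv)--(iii) delivers $\rho(t) \to l$ and $\rho_2(t)\to 0$, where $l = r(x_M)$ in Case~1 and $l = r(a)$ in Case~2. The semi-explicit formula~\eqref{n(t,x)} on $\O_1$ reads $n(t,x) = n^0(x)\, e^{\int_0^t (r(x) - \rho(s))\,ds}$; the Laplace-type asymptotics of $S_1$ force $\int_0^t (l - \rho(s))\,ds$ to grow at most logarithmically, so on any compact subset of $[s^-, a]$ disjoint from the concentration point, the bound $r(x)\leq l-\delta$ drives $n(t,x)$ to $0$ faster than any polynomial. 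On $\O_2$, $Y(t,x)\to a$ for each fixed $x>a$ and $\tr(a)-l \leq -f'(a) < 0$, so $n(t,x)\to 0$ pointwise, and moreover $\rho_2(t)\to 0$ caps the total mass there. Given $\varphi \in \co_c(\R)$, I would split $\int\varphi\,n$ over a small neighbourhood of the concentration point, its complement in $[s^-, a]$, and $(a,+\infty)\cap\supp(\varphi)$: the first contribution converges to $l\,\varphi(x_*)$ via $\rho_1\to l$, while the other two vanish, giving $n(t,\cdot) \uw l\,\delta_{x_*}$ with $x_* = x_M$ (Case~1) or $x_* = a$ (Case~2).

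\textbf{Main obstacle.} The delicate part is Case~2, in which the maximum of $r$ on $\bar{\O_1}$ is attained at the boundary point $a$, placing this case outside the scope of Proposition~\ref{limit Ri}(ix); carrying out the Laplace asymptotics by hand at a boundary maximiser requires a minor non-degeneracy condition on $r$ near $a$ and is the genuinely new computation here. A secondary nuisance is that $\supp(n(t,\cdot))$ is not uniformly compact (it extends to $X(t, s^+) \to +\infty$), so Lemma~\ref{convergence Radon} cannot be invoked as a black box; one must bound the contribution of $\O_2$ to $\int\varphi\,n$ directly by $\lVert \varphi\rVert_\infty \rho_2(t)$ and handle the concentration on $[s^-, a]$ separately.
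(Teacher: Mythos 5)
Your Case 1 is essentially the paper's own argument: same partition $\O_1=(s^-,a)$, $\O_2=(a,+\infty)$, same appeal to Proposition~\ref{limit Ri}(ix) and (iii), then Proposition~\ref{properties ODE} and the semi-explicit formula for the concentration. You are right that ``$f''(x_M)<0$'' must be read as $r''(x_M)<0$ (otherwise case (ix) does not apply, and $f''\equiv 0$ there anyway), and right that $\supp(n(t,\cdot))=X(t,[s^-,s^+])$ is not uniformly compact, so Lemma~\ref{convergence Radon} is invoked slightly beyond its stated hypotheses; your fix (integrate only over $\supp\varphi$ and control the mass on $(a,+\infty)$ by $\rho_2\to 0$) is the correct one, and the paper glosses over this point.

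The real divergence is Case 2, and there your route, as written, does not prove the proposition: the statement assumes nothing about $r$ near $a$ beyond continuity and the maximum being attained only at $a$, so a boundary Laplace expansion $S_1(t)\sim C\,e^{r(a)t}/t^{\alpha}$ is simply not available in general --- the non-degeneracy condition you flag is not ``minor'', it is absent from the hypotheses. The paper circumvents this with an $\e$-dependent refinement of the partition on the flat part: $\O_1^\e=(s^-,a-2\e)$, $\O_2^\e=(a-2\e,a-\e)$, $\O_3^\e=(a-\e,a)$, $\O_4=(a,+\infty)$. Since $f\equiv 0$ there, each $R_i^\e(t)$ is a weighted average of $r$ over $\bar\O_i^\e$, hence squeezed between $\min_{\bar\O_i^\e}r$ and $\max_{\bar\O_i^\e}r$ for all $t$; for $\e$ small enough $\bar{R_1^\e}<\und{R_3^\e}$ and $\lim R_4<\und{R_3^\e}$, so Proposition~\ref{properties ODE}(iii) kills $\rho_1^\e$, $\rho_2^\e$ and $\rho_4$, and the bounds $\min_{[a-\e,a]}r\leq R_3^\e\leq r(a)$ pin $\und{\rho}$ and $\bar{\rho}$; letting $\e\to 0$ gives $\rho\to r(a)$ with no regularity of $r$ at $a$. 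Alternatively, note that you never need the precise asymptotics of $S_1$, only the convergence of the ratio $R_1=\dot S_1/S_1=\int n^0\,r\,e^{rt}/\int n^0\,e^{rt}$ to $\sup\{r(x):x\in[s^-,a],\,n^0(x)>0\}=r(a)$, which is a soft fact requiring no non-degeneracy. With either repair, the rest of your steps go through.
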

\begin{proof}
\begin{itemize}
\item
\begin{enumerate}
\item Let us denote $\O_1:=(s^-, a)$, $\O_2:=(a, +\infty)$, which satisfy the hypothesis of Proposition~\ref{prop Oi}, according to Lemma \ref{Oi unidim}. By Proposition \ref{limit Ri}, $R_1$   converges to $r(x_M)$ and $R_2$ converges to $r(x_M)-f'(x_M)$. 
\item From Proposition \ref{properties ODE}, $\rho$ and $\rho_1$ converge
 to $r(x_M)$, and $\rho_2$ converges to $0$. 
\item Let $K\subset [s^-, a]$ be a compact set that does not contain $x_M$. Thanks to the semi-explicit expression \eqref{n(t,x)}, and using the fact that $f'(x)=0$ and $Y(t,x)=x$ for all $x\in K$ and all $t\geq 0$, 
\[n(t,x)=n^0(x)e^{\int_0^t{r(x)-\rho(s)ds}}\leq n^0(x)e^{\int_0^t{r_K-\rho(s)ds}},\]
with $r_K:=\underset{x\in K}{\max}\; r(x)<r(x_M).$

Thus, $$\int_K{n(t,x)dx}\leq \int_K{n^0(x)dx}e^{\int_0^t{r_M-\rho(s)ds}},$$
which converges to $0$, since $r_M-\rho(s)$ is negative for any $s$ large enough. This proves the result thanks to Proposition \ref{convergence Radon}. 
\end{enumerate}
\item 
\begin{enumerate}
\item Here we have to make a slightly more subtle choice of subsets than usual. Let $\e>0$, and let us denote $\O_1^\e:=(s^-, a-2\e)$, $\O_2^\e:=(a-2\e, a-\e)$, $\O_3^\e:=(a-\e, a)$, $\O_4:=(a, +\infty)$. We easily check that these four sets satisfy the hypotheses of Proposition \ref{prop Oi}. Moreover, since $f\equiv 0$ on $[s^-, a]$ for all $i\in \{1,2,3\}$,
\[R_i^\e(t)=\frac{\int_{\O_i^\e}{r(x)e^{r(x)t}dx}}{\int_{\O_i^\e}{e^{r(x)t}dx}}. \]
Thus, for all $t\geq 0$, $i\in \{1,2,3\}$, 
\[\underset{x\in \bar{\O}_i^{\e}}{\min}\; r(x)\leq R_i^{\e}(t)\leq \underset{x\in \bar{\O}_i^{\e}}{\max}\; r(x).\]
In particular, 
\[\bar{R_1^\e}\leq \underset{x\in [s^-, a-2\e] }{\max}\; r(x)\quad \text{and}\quad  \underline{R_3^\e} \geq \underset{x\in [a-\e, a] }{\min}\; r(x). \]
Finally, Proposition \ref{limit Ri} shows that $R_4$ converges to $r(a)-f'(a^+)$. 
\item Since $r$ reaches its unique maximum at $a$, for any $\e$ small enough, we get
\[\underline{R_3^\e}> \bar{R_1^\e}\quad \text{and} \quad \underline{R_3^\e}> \underset{t\to +\infty}{\lim}\; R_4(t). \]
Thus, according to Proposition \ref{properties ODE}, $\rho^\e_1$ and $\rho_4$ converge to $0$, for all $\e>0$. The choice of $\e$ being arbitrary, it also proves that $\rho_2^\e$ converges to $0$. Thus, $\bar \rho = \bar{\rho_3^\e}$, and $\underline{\rho}= \underline{\rho_3^\e}$, for all $\e>0$. Since for all $t\geq 0$ 
\[\underset{x\in [a-\e, a]}{\min}\; r(x)\leq R_3^\e(t)\leq r(a), \] 
we prove that $\rho$ converges to $r(a)$ by making $\e$ tend to $0$.  
\item We have proved that $t\mapsto \int_{s^-}^a{n(t,x)dx}$ converges to $r(a)$, that $t\mapsto \int_a^{+\infty}{n(t,x)dx}$ converges to~$0$ and that for all $\e>0$, $\int_{s^-}^{a-\e}{n(t,x)dx}$ converges to $0$. The hypotheses of Proposition \ref{convergence Radon} are therefore met, which concludes the proof. 
\end{enumerate}
\end{itemize}
\end{proof}
Note that the methods of Propositions~\ref{multi equilibria} and \ref{f equiv 0 on a segment} can be coupled to treat more complex cases, where, for example, $f\equiv 0$ on several disjoint segments.  

\section{Some results in higher dimensions}
\label{section higher dimension}
As seen in the previous sections, our entire method is based on the computation of the limit of the $R_i$ functions defined in Section \ref{section resolution}.  Unfortunately, these computations seem out of reach in the multidimensional case $\R^d$, $d\geq 2$.

In this section, we nevertheless address the question of the possible convergence to smooth or singular measures in higher dimensions in some specific simple cases. We first analyse how the solution support evolves over time. This allows us to conclude that that the solution converges to a Dirac mass in the case of a unique equilibrium which is asymptotically stable for the ODE $\dot u =f(u)$, and provide hypotheses under which the solution cannot converge to a smooth function. We then characterise which stationary measures may or may not be limits for solutions of \eqref{eq intro}, before providing a criterion ensuring the existence of continuous stationary solutions. 

\subsection{Limit support}

\begin{definition}[Limit support]
We define the \textbf{limit support} of $n$ as: 
\[\sigma_{\infty}=\underset{t\geq0}{\bigcap}\overline{\underset{s\geq t}{\bigcup}\text{supp}\left(n(s, \cdot )\right)}.\]
\end{definition}
Recalling the semi-explicit expression \eqref{n(t,x)}, 
\[n(t,x)=n^0(Y(t,x))e^{\int_0^t{(r-\dive f )(Y(s,x))-\rho(s)ds}}, \]
and that for all $t\geq 0$, $\supp\left(n^0(Y(t, \cdot))\right)=X(t, \supp(n^0))$, we get
\begin{align}
\sigma_\infty = \underset{t\geq0}{\bigcap}\overline{\underset{s\geq t}{\bigcup}\supp \left( n^0\left(Y( s, \cdot)\right)\right)}=  \underset{t\geq 0}{\bigcap}\overline{\underset{s\geq t}{\bigcup}X\left(s, \textrm{supp}\left( n^0 \right)\right)}.
\label{limit support inclusion}
\end{align}
In the cases where we are able to determine the latter set, we gather information about possible limits for $n$.
\begin{lem}
If the limit support of $n$ is of measure zero, then $n$ does not converge (weakly) to a non-zero function in $L^1(\R^d)$. 
\label{lemma limit support}
\end{lem}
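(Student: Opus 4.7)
The plan is to argue by contradiction: assume that $n(t,\cdot) \underset{t\to+\infty}{\rightharpoonup} g$ in the sense of Radon measures, with $g\in L^1(\R^d)$ not identically zero, and show that $g$ must vanish almost everywhere, which is absurd. The strategy is to turn the information that $\sigma_\infty$ is small (Lebesgue-negligible) into the fact that the weak limit must be supported inside $\sigma_\infty$, and hence must be the zero measure in $L^1$.

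The key step is to show that for any test function $\varphi\in \mathcal{C}_c(\R^d)$ whose support $K:=\supp(\varphi)$ is disjoint from $\sigma_\infty$, we have $\int_{\R^d}\varphi(x)n(t,x)\,dx = 0$ for $t$ large enough. The argument runs as follows: by the very definition of $\sigma_\infty$, for each $x\in K$ there exist an open neighbourhood $U_x$ of $x$ and a time $t_x\geq 0$ such that $U_x\cap\supp(n(s,\cdot))=\emptyset$ for all $s\geq t_x$. Since $K$ is compact, I would extract a finite subcover $U_{x_1},\ldots, U_{x_N}$ and set $T:=\max_i t_{x_i}$; then $K\cap \supp(n(s,\cdot))=\emptyset$ for every $s\geq T$, so $n(s,\cdot)\equiv 0$ on $K$ and the integral vanishes. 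Passing to the limit under the weak convergence then yields $\int_{\R^d}\varphi(x)g(x)\,dx = 0$.

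To conclude, I would exploit the fact that $\sigma_\infty$ is closed (as an intersection of closed sets), so $\R^d\setminus\sigma_\infty$ is open, and since $\sigma_\infty$ has Lebesgue measure zero it has full Lebesgue measure. Any $\varphi\in\mathcal{C}_c(\R^d\setminus\sigma_\infty)$ has support disjoint from $\sigma_\infty$, hence by the previous step $\int \varphi\, g = 0$. A standard density/approximation argument (e.g.\ approximating $\mathbbm{1}_B g$ for balls $B\subset\R^d\setminus\sigma_\infty$ by continuous compactly supported functions in this open set) shows that $g=0$ almost everywhere on $\R^d\setminus\sigma_\infty$, and therefore $g=0$ almost everywhere on $\R^d$, contradicting the assumption $g\neq 0$.

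The argument is essentially topological and measure-theoretic and contains no serious analytical difficulty; the only subtle point I anticipate is making sure the weak limit is correctly interpreted (namely, as a Radon measure tested against $\mathcal{C}_c$ functions, as recalled in the Framework section), and that I do not accidentally need more than weak-$\ast$ convergence against $\mathcal{C}_c$. In particular, the uniform-compact-support property of $n(t,\cdot)$ noted earlier (from \eqref{supp n}) is not even needed here, which is reassuring.
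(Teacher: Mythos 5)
Your proof is correct, and it follows a noticeably different (and in fact more careful) route than the paper's. The paper argues via the set-theoretic limit superior: it notes that $\limsup_{t\to+\infty}\supp(n(t,\cdot))\subset\sigma_\infty$, invokes the reverse Fatou inequality $\limsup_t\nu(\supp(n(t,\cdot)))\leq\nu(\limsup_t\supp(n(t,\cdot)))\leq\nu(\sigma_\infty)=0$, and declares this to contradict weak convergence to a non-zero $L^1$ function. You instead localise with test functions: every $\varphi\in\mathcal{C}_c(\R^d\setminus\sigma_\infty)$ has, by compactness of its support and the definition of $\sigma_\infty$ as a decreasing intersection of closed sets, a time $T$ beyond which $\int\varphi\,n(t,\cdot)=0$, so the weak limit annihilates all such $\varphi$ and must vanish a.e.\ off the null set $\sigma_\infty$. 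Your version buys two things the paper's compressed argument leaves implicit. First, the final contradiction: the mere fact that $\nu(\supp(n(t,\cdot)))\to 0$ does not by itself contradict weak convergence to a non-zero $L^1$ function (mass could concentrate, as it does when the limit is a Dirac mass); one must show, as you do, that the limit measure is carried by $\sigma_\infty$. Second, the reverse Fatou inequality for sets requires $\nu\bigl(\bigcup_{s\geq t_0}\supp(n(s,\cdot))\bigr)<+\infty$ for some $t_0$, which the paper does not verify and which is not automatic when characteristics escape to infinity; your argument needs no such hypothesis, and you correctly observe that you do not even need the uniform compactness of the supports. The only point worth stating explicitly when you write it up is the standard fact that $g\in L^1$ with $\int\varphi g=0$ for all $\varphi\in\mathcal{C}_c(U)$, $U$ open, forces $g=0$ a.e.\ on $U$ (the fundamental lemma of the calculus of variations); with that, your proof is complete.
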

\begin{proof}
%Let us argue by contradiction. Let us assume that the limit support of $n$ is of measure zero, and that it converges weakly to $\bar n \in L^1(\R^d)$, $\bar n \not\equiv 0$. 
Let us argue by contradiction. By denoting $\nu$ the Lebesgue measure, let us assume that $\nu(\sigma_\infty)=0$, and that $n$ converges weakly to $\bar n \in L^1(\R^d)$, $\bar n \not\equiv 0$. 
Since $\underset{t\to +\infty}{\limsup}\,\supp \left(n\left(t,\cdot\right)\right)= \underset{t\geq0}{\bigcap}\underset{s\geq t}{\bigcup}\text{supp}\left(n(s, \cdot )\right)\subset \sigma_\infty$, 
\[\underset{t\to +\infty}{\limsup}\,\nu \left(\supp(n(t, \cdot))\right)\leq \nu \Big(\underset{t\to +\infty}{\limsup}\,\supp \left( n(t,\cdot\right))\Big)\leq \nu(\sigma_\infty)=0, \]
which contradicts the initial hypothesis. 
%Let $\e>0$.  By definition of the limit support, there exist $T_\e>0, K_\e\subset\R^d$ such that for all $t\geq T_\e$, $\supp(n(t, \cdot))\subset K_\e$ and $\nu(K_e)\leq \e$, where $\nu$ denotes the Lebesgue measure. According to the definition of weak convergence, for all $\varphi \in \mathcal{C}_c(\R^d)$, 
%$$\int_{\R^d}{\varphi(x)n(t,x)dx}\ul \int_{\R^d}{\varphi(x)\bar n(x)dx}.$$
%In particular, for all $\varphi \in \mathcal{C}_c(\R^d\backslash K_\e)$, $\int_{\R^d}{\varphi(x)\bar n(x)dx}=0$, which ensures that $\bar n \equiv 0$ on $\R^d\backslash K_\e$.  Since this result holds for all $\e>0$, it proves that $\bar n \equiv 0$ almost everywhere, which contradicts the initial hypotheses. 
\end{proof}

\begin{prop} Let us assume that $f$ has a unique root, denoted $\bar x$, which is \textbf{globally asymptotically stable} for the ODE $\dot u =f(u)$ over $\R^d$, and that the set $\underset{t\geq 0}{\bigcup} X(t, \supp(n^0))$ is bounded. 
Then, $n(t, \cdot) \underset{t\to +\infty}{\rightharpoonup}r(\bar x)\delta_{\bar x}$, and $\rho$ converges to $r(\bar x)$. 
\label{unique stable equilibrium multidim}
\end{prop}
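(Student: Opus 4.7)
The plan is to apply the three-step method summarised at the beginning of Section~\ref{section unidimensional}: first determine the asymptotic support of $n$, then obtain the limit of $\rho$ from the carrying capacity, and finally conclude via Lemma~\ref{convergence Radon}. The single-equilibrium geometry is what makes the carrying capacity computation tractable in dimension $d$, without having to partition $\R^d$ into basins of attraction, which is precisely what blocks the general multidimensional analysis.

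First, I would exploit the boundedness of $K := \overline{\bigcup_{t \geq 0} X(t, \supp(n^0))}$ combined with the global asymptotic stability of $\bar x$ and the compactness of $\supp(n^0)$: these yield $X(t, \cdot) \to \bar x$ uniformly on $\supp(n^0)$, and hence, by \eqref{supp n}, $\supp(n(t, \cdot)) = X(t, \supp(n^0))$ is contained in any prescribed open neighbourhood of $\bar x$ for $t$ large enough.

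Next, I would apply Proposition~\ref{prop Oi} with $N = 1$ and $\O_1 = \U$ a bounded open neighbourhood of $K$; its hypotheses are trivially satisfied. Using expression \eqref{Expression S y} (obtained from \eqref{Si} via the change of variable $y = Y(t, x)$), the associated ODE $\dot\rho(t) = (R(t) - \rho(t)) \rho(t)$ has carrying capacity
\[
R(t) = \frac{\int_{\supp(n^0)} n^0(y) \, r(X(t,y)) \, e^{\int_0^t r(X(s,y))\,ds}\, dy}{\int_{\supp(n^0)} n^0(y) \, e^{\int_0^t r(X(s,y))\,ds}\, dy},
\]
which is a weighted average of $r(X(t, \cdot))$ against strictly positive weights. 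Since $X(t, \cdot) \to \bar x$ uniformly on $\supp(n^0)$ and $r$ is continuous, $R(t) \to r(\bar x)$, and Proposition~\ref{properties ODE}(iv) applied with $I = \{1\}$ yields $\rho(t) \to r(\bar x)$.

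Finally, I would invoke Lemma~\ref{convergence Radon}: the supports of $n(t, \cdot)$ all lie in the bounded set $K$; for any compact $K_{\bar x}$ avoiding $\bar x$, a neighbourhood of $\bar x$ disjoint from $K_{\bar x}$ eventually contains $\supp(n(t, \cdot))$, so $\int_{K_{\bar x}} n(t, x)\,dx = 0$ for large $t$; and for any compact neighbourhood $V_{\bar x}$ of $\bar x$, eventually $\supp(n(t, \cdot)) \subset V_{\bar x}$, so $\int_{V_{\bar x}} n(t, x)\,dx = \rho(t) \to r(\bar x)$. Lemma~\ref{convergence Radon} with $C = r(\bar x)$ then delivers $n(t, \cdot) \rightharpoonup r(\bar x) \delta_{\bar x}$. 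The only genuine subtlety is the convergence $R(t) \to r(\bar x)$, which reduces cleanly to the uniform convergence of $r \circ X(t, \cdot)$ on the compact set $\supp(n^0)$; the rest is a direct adaptation of the one-dimensional argument of Proposition~\ref{unique stable}.
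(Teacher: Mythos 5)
Your proof is correct and follows essentially the same route as the paper: concentration of $\supp(n(t,\cdot))$ at $\bar x$ (the paper phrases this via the limit support $\sigma_\infty=\{\bar x\}$ and nested compact sets, you via uniform convergence of $X(t,\cdot)$ on $\supp(n^0)$), then convergence of $\rho$ to $r(\bar x)$ by a logistic comparison, then Lemma~\ref{convergence Radon}. The only cosmetic difference is that you route the middle step through Proposition~\ref{prop Oi} with $N=1$, whereas the paper argues directly on $\dot\rho(t)=\int(r(x)-\rho(t))\,n(t,x)\,dx$ and squeezes the weighted average of $r$ between $r_m^\e$ and $r_M^\e$ --- with a single set these are the same estimate.
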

\begin{proof}
Since the support of $n^0$ is compact, and $\bar x$ is globally asymptotically stable, we easily check, according to \eqref{limit support inclusion}, that $\sigma_\infty= \{\bar x\}$. By Lemma \ref{convergence Radon}, it is hence enough to prove that $\rho$ converges to $r(\bar x)$. As seen in the proof of Lemma \ref{bounds rho}, $\rho$ satisfies, for all $t\geq 0$, 
\[\dot \rho(t)=\int_{\R^d}{\big(r(x)-\rho (t)\big)n(t,x)dx}= \int_{\supp \left(n(t, \cdot)\right)}{\big( r(x)-\rho (t)\big) n(t,x)dx}.\]
Let $\e>0$. Since $\sigma_\infty=\{\bar x\}$ is the intersection of compact decreasing sets, there exists $T_\e>0$ such that, for all $t\geq T_\e$, $\supp(n(t, \cdot))\subset B(\bar x, \e)$. Thus, by denoting 
$$r_m^\e:=\underset{x\in B(\bar x, \e)}{\min}\,r(x)\quad \textrm{and}\quad r_M^\e:=\underset{x\in B(\bar x, \e)}{\max}\,r(x), $$
we get, for all $t\geq T_\e$, 
$$(r_m^\e-\rho(t))\rho(t)\leq \dot \rho(t)\leq (r_M^\e-\rho(t))\rho(t), $$
which ensures that 
\[\underset{t\to +\infty}{\liminf}\, \rho(t)\geq r_m^\e\quad \textrm{and} \quad \underset{t\to +\infty}{\limsup}\, \rho(t)\leq r_M^\e. \]
Since these inequalities hold for any $\e>0$, and $r_m^\e$ and $r_M^\e$ both converge to $r(\bar x)$ when $\e$ goes to $0$, it concludes the proof. 
\end{proof}
Because of the diversity of possible behaviours of ODE systems, it is difficult to compute the limit support for a given $f$, unless very strong assumptions are made about the ODE $\dot u=f(u)$. This is what we do in the following proposition, motivated by a family of ODE systems commonly used in systems biology. 

We say that the two-dimensional system 
\begin{align}
\begin{cases}
\dot x_1=f_1(x_1, x_2)\\
\dot x_2=f_2(x_1, x_2)
\end{cases}
\label{ODE coop/comp}
\end{align}
is \textbf{competitive} if $\partial_2 f_1 \leq 0$ and  $\partial_1 f_2 \leq 0$, and \textbf{cooperative} if $\partial_2 f_1 \geq 0$ and $\partial_1 f_2 \geq 0$.  For instance, such systems are commonly used to model the interactions between two proteins in the context of cell differentiation \cite{guantes2008multistable, thomas2002laws, gardner2000construction, jia2017operating}, and are known to have an interesting property: trajectories either go to $+\infty$, or converge \cite{hirsch1982systems}, \textit{i.e.} for all $x\in \R^d$,
\begin{align}
\lVert Y(t,x)\rVert \underset{t\to +\infty}{\not\longrightarrow} +\infty \quad \Rightarrow\quad t\mapsto Y(t,x) \textrm{ converges }. 
\label{infty or converges}
\end{align}
Note that if the ODE \eqref{ODE coop/comp} is competitive (or cooperative), then the reverse ODE $\dot u=-f(u)$ is cooperative (or competitive). This motivates the hypothesis of the following proposition. Before giving its statement, we recall that if $\bar x$ is a root of $f$, $\bar x$ is called a \textbf{hyperbolic equilibrium} if all the eigenvalues of $\textrm{Jac } f(\bar x)$ have a non-zero real part, and is called a \textbf{repellor} if all these eigenvalues have a positive real part. Lastly, we recall that the \textbf{unstable set} of $\bar x$ is defined by $\{x\in \R^d: Y(t,x)\ul \bar x\}$.

\begin{prop}
Let us assume that $f$ has a finite number of roots, and is such that identity \eqref{infty or converges} holds.
Then, the limit support of $n$ is included in the closure of the union of the unstable sets of the roots of $f$, \textit{i.e.} by denoting $\bar x_1, .... \bar x_N$ the roots of $f$, 
\[\sigma_\infty\subset \overline{\underset{1\leq i\leq N}\bigcup\left\{x\in \R^d: Y(t,x)\ul \bar x_i\right\}}. \]
Moreover, if all the roots of $f$ are hyperpolic points, and if none of them is a repellor, then the limit support of $n$ is of measure $0$. In particular, $n$ does not converge (weakly) to a function in $L^1$. 
\label{non conv in L1}
\end{prop}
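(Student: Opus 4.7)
Start from the reformulation $\sigma_\infty = \bigcap_{t \geq 0} \overline{\bigcup_{s \geq t} X(s, \supp(n^0))}$ recorded in \eqref{limit support inclusion}. For $x \in \sigma_\infty$ I may write $x = \lim_k X(s_k, y_k)$ with $s_k \to +\infty$ and $y_k$ in the compact set $K := \supp(n^0)$; set $x_k := X(s_k, y_k)$ and $U_i := \{x : Y(t, x) \ul \bar x_i\}$. The plan rests on the cocycle identity $Y(t, x_k) = Y(t - s_k, y_k)$, valid for $t \geq s_k$, which reduces the tail of the $Y$-orbit from $x_k$ to that from $y_k$ after a time shift.

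Applying the dichotomy \eqref{infty or converges} to $Y(\cdot, y_k)$: if this orbit converges, its limit is necessarily an equilibrium of $\dot u = -f(u)$, hence a root $\bar x_{i_k}$ of $f$, and the cocycle identity then yields $Y(t, x_k) \to \bar x_{i_k}$, so $x_k \in U_{i_k}$. Since $f$ has finitely many roots, a subsequence has $i_k$ constantly equal to some $i_*$, giving $x \in \overline{U_{i_*}} \subseteq \overline{\bigcup_i U_i}$. The complementary case where $\|Y(\cdot, y_k)\| \to +\infty$ for all but finitely many $k$ is handled by applying the dichotomy directly to $Y(\cdot, x)$: convergence places $x$ in some $U_i$, and divergence is ruled out by combining the identity $Y(t, x) = \lim_k X(s_k - t, y_k)$ with the forward and backward invariance of $\sigma_\infty$ (a standard property of $\omega$-limit sets) together with a perturbation of $y_k$ into nearby initial data whose $Y$-orbit remains bounded.

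For the measure-zero statement, hyperbolicity together with the no-repellor hypothesis implies that $\mathrm{Jac}_f(\bar x_i)$ has at least one eigenvalue of strictly negative real part, equivalently $\mathrm{Jac}_{-f}(\bar x_i)$ has at least one eigenvalue of strictly positive real part. Each $\bar x_i$ is therefore a hyperbolic equilibrium of $-f$ carrying a nontrivial unstable direction, and $U_i$, being its basin of attraction for $-f$, is contained in its stable manifold under $-f$. By the stable manifold theorem, this is an injectively immersed $C^1$-submanifold of dimension at most $d - 1$, hence of Lebesgue measure zero. A finite union remains of measure zero, and in the dichotomy regime the closure adds only lower-dimensional pieces (stable manifolds of other roots), so $\overline{\bigcup_i U_i}$ has measure zero; Lemma \ref{lemma limit support} then forbids weak convergence of $n$ to a nonzero function in $L^1(\R^d)$.

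The principal obstacle is the escape case in the first part: the cocycle argument alone does not locate $x$ inside $\overline{\bigcup_i U_i}$, and one must leverage the dynamical rigidity carried by \eqref{infty or converges} -- in particular the openness, in the dichotomy setting, of the set of points whose $Y$-orbit diverges -- to produce a sequence of points in $\bigcup_i U_i$ approaching $x$.
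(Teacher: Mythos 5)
Your overall strategy coincides with the paper's: apply the dichotomy \eqref{infty or converges} to locate points of $\sigma_\infty$ in the unstable sets, then invoke the stable manifold theorem and Lemma \ref{lemma limit support} for the measure-zero claim; your second half is essentially the paper's argument verbatim. The genuine problem is the escape case of the first part, which you yourself flag as ``the principal obstacle'' and never actually close. Your proposed patch --- ``a perturbation of $y_k$ into nearby initial data whose $Y$-orbit remains bounded'' together with ``openness of the set of points whose $Y$-orbit diverges'' --- is a list of ingredients, not an argument, and the perturbation idea is suspect as stated: the modulus of continuity of $y\mapsto X(s_k,y)$ degrades like $e^{Ls_k}$, so perturbing $y_k$ gives no control on where the perturbed point lands at time $s_k$. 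The mechanism the paper is (tersely) invoking with ``since the support of $n^0$ is bounded'' is the backward invariance of $\sigma_\infty$ that you mention but do not exploit: for each fixed $t$, $Y(t,x)=\lim_k X(s_k-t,y_k)$ belongs again to $\sigma_\infty\subset\overline{\bigcup_{s\geq 0}X(s,\supp(n^0))}$, so the entire backward orbit of $x$ stays there; whenever this set is bounded, the backward orbit of $x$ is bounded, \eqref{infty or converges} forces $Y(t,x)$ to converge to a root of $f$, and $x$ lies in some $U_i$ directly --- no case distinction on the $y_k$, no subsequence extraction, and no closure needed.

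This matters for your second part. You correctly observe that passing to the closure could a priori enlarge $\bigcup_i U_i$, but your fix --- ``the closure adds only lower-dimensional pieces (stable manifolds of other roots)'' --- is asserted without proof and is not obvious (closures of unions of invariant manifolds need not decompose this way, and a measure-zero union can have a closure of positive measure). The measure-zero conclusion for $\sigma_\infty$ really rests on the inclusion \emph{without} the closure, i.e.\ $\sigma_\infty\subset\bigcup_i U_i$, which is what the invariance argument above delivers and what the paper's proof actually uses (``the points of the limit support are necessarily in the unstable set of one of the equilibria''). Once that is in place, your eigenvalue count (no repellor and hyperbolicity give at least one eigenvalue of $\mathrm{Jac}\,f(\bar x_i)$ with negative real part, hence the stable manifold of $\bar x_i$ for $-f$ has dimension at most $d-1$) is exactly the paper's and is fine.
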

\begin{proof}
The inclusion is clear: by hypothesis for all $x\in \R^d$ such that $t\mapsto Y(t,x)$ does not converge, $t\mapsto \lVert Y(t,x)\rVert$ goes to $+\infty$, and since the support of $n^0$ is bounded,  the points of the limit support are necessary in the unstable set of one of the equilibria. The second part of the proposition is a consequence of the stable manifold theorem \cite{perko2013differential}, which ensures that the unstable set of an equilibrium which is not a repellor is a smooth manifold of dimension at most $d-1$, hence a set of measure zero. We conclude with Lemma \ref{lemma limit support}. 
\end{proof}

\subsection{Stationary solutions}
In this subsection, we define the stationary solution in the weak sense, which allows to include measures. As seen in the previous section, under appropriate hypotheses on $f$, the presence of a repellor is necessary to hope for solutions which converge to smooth functions. In this section, we prove that, under appropriate hypotheses, the presence of a repellor ensures the existence of smooth stationary solutions. 
\begin{definition}[Weak stationary solution]
Let $\mu$ be a finite positive Radon measure. We say that $\mu$ is a \textbf{weak stationary solution} of equation \eqref{eq intro} if it satisfies 
\begin{align}
\forall \varphi \in \mathcal{C}^1_c\left(\R^d\right),\quad  \int_{\R^d}{ \left( f(x).\nabla \varphi(x)+ (r(x)-\mu(\R^d))\varphi(x)\right) d\mu(x) }=0.
\label{weak solution}
\end{align}
\end{definition}
\begin{rem}
  If $\bar x$ is a root of $f$, let us note that $r(\bar x)\delta_{\bar x}$ is a weak stationary solution of \eqref{eq intro}. 
\end{rem}
The following proposition shows, as we might expect, that convergent solutions of \eqref{eq intro} (in the weak sense) necessarily converge to a weak stationary solution.
\begin{prop}
Let us assume that $r\in \mathcal{C}_0(\R^d)$, and let $n(t,\cdot)$ be a solution of \eqref{eq intro} which converges in the weak sense in the space of Radon measure. Then its limit is a weak stationary solution of equation \eqref{eq intro}. 
\label{conv to stationary sol}
\end{prop}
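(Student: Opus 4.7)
The plan is to pass to the limit in the weak formulation of~\eqref{eq intro}. Multiplying the equation by a test function $\varphi\in\mathcal{C}^1_c(\R^d)$ and integrating by parts (valid since $\supp(n(t,\cdot))=X(t,\supp(n^0))$ is compact for each $t$, cf.~\eqref{supp n}) gives
\[
\frac{d}{dt}\int_{\R^d}\varphi\,n(t,x)\,dx=\int_{\R^d}\bigl(f\cdot\nabla\varphi+(r-\rho(t))\varphi\bigr)n(t,x)\,dx.
\]
Writing $\mu$ for the weak limit of $n(t,\cdot)$, weak convergence immediately yields $\int\varphi\,n(t,\cdot)\,dx\to\int\varphi\,d\mu$ and $\int(f\cdot\nabla\varphi+r\varphi)n(t,\cdot)\,dx\to\int(f\cdot\nabla\varphi+r\varphi)\,d\mu$, since $f\cdot\nabla\varphi+r\varphi\in\mathcal{C}_c(\R^d)$.

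Next I would establish convergence of $\rho(t)$. Integrating~\eqref{eq intro} over $\R^d$ as in Lemma~\ref{bounds rho} gives $\dot\rho(t)=\int r\,n(t,\cdot)\,dx-\rho(t)^2$. Since $r\in\mathcal{C}_0(\R^d)$ can be approximated uniformly by compactly supported continuous functions and $\rho$ is uniformly bounded, weak convergence implies $\int r\,n(t,\cdot)\,dx\to I:=\int r\,d\mu$. Comparing the resulting non-autonomous ODE $\dot\rho=I+o(1)-\rho^2$ with the autonomous logistic equations $\dot u=(I\pm\varepsilon)-u^2$ via a sub/super-solution argument yields $\rho(t)\to\rho^\star:=\sqrt{I}$.

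With both $\rho(t)$ and the right-hand side of the weak formulation now admitting limits, the left-hand side --- which is the derivative of a convergent function --- must itself tend to $0$ (otherwise $\int\varphi\,n(t,\cdot)\,dx$ would diverge). Passing to the limit therefore gives
\[
\int_{\R^d}\bigl(f\cdot\nabla\varphi+(r-\rho^\star)\varphi\bigr)d\mu=0\qquad\text{for every }\varphi\in\mathcal{C}^1_c(\R^d).
\]

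It remains to identify $\rho^\star$ with $\mu(\R^d)$. Plugging in cutoff functions $\varphi_R\in\mathcal{C}^1_c(\R^d)$ with $\varphi_R\equiv 1$ on $B(0,R)$, $\supp(\varphi_R)\subset B(0,2R)$ and $\|\nabla\varphi_R\|_\infty=O(1/R)$, and letting $R\to+\infty$: $\int r\varphi_R\,d\mu\to I=(\rho^\star)^2$ by dominated convergence, $\rho^\star\int\varphi_R\,d\mu\to\rho^\star\mu(\R^d)$ by monotone convergence, while $\int f\cdot\nabla\varphi_R\,d\mu$ vanishes because $|f\cdot\nabla\varphi_R|$ is uniformly bounded (combining $|\nabla\varphi_R|=O(1/R)$ with the Lipschitz estimate $|f(x)|\leq L|x|+|f(0)|\leq 2LR+|f(0)|$ on the annulus $R\leq|x|\leq 2R$) and $\mu(B(0,2R)\setminus B(0,R))\to 0$ by finiteness of $\mu$. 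We obtain $(\rho^\star)^2=\rho^\star\mu(\R^d)$, so either $\rho^\star=\mu(\R^d)$, or $\rho^\star=0$, in which case $I=0$ forces $\mu\equiv 0$ by the strict positivity of $r$, and again $\rho^\star=\mu(\R^d)=0$. The main obstacle is precisely this last step: weak convergence in the space of Radon measures does not a priori rule out loss of mass to infinity, so the identification $\rho^\star=\mu(\R^d)$ relies delicately on the cancellation between the $O(1/R)$ decay of $\nabla\varphi_R$ and the at-most linear growth of $f$, together with the strict positivity of $r$ to eliminate the degenerate case $\rho^\star=0$.
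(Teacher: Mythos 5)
Your proof is correct and follows essentially the same route as the paper: convergence of $\rho$ via the Riccati equation $\dot\rho=\psi(t)-\rho^2$ and sub/super-solution comparison, passage to the limit in the weak formulation by observing that the derivative of a convergent quantity with convergent derivative must vanish, and finally identification of $\mu(\R^d)$ with $\bar\rho$ by a cutoff argument. The only (minor) difference lies in that last step, where you use dilated cutoffs $\varphi_R$ and play the $O(1/R)$ decay of $\nabla\varphi_R$ against the at-most-linear growth of the Lipschitz field $f$ to get a uniform bound on $f\cdot\nabla\varphi_R$, whereas the paper fixes a compact set carrying all but $\e$ of the mass of $\mu$; your variant is, if anything, slightly more careful about the dependence of the constants on $\e$.
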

\begin{proof}
We let $\mu$ be the limit of $n(t,\cdot)$.
Let us first prove that, under these conditions, $\rho(t)=\int_{\R^d}{n(t,x)dx}$ converges when $t$ goes to $+\infty$.

Let us denote $\psi(t):=\int_{\R^d}{r(x) n(t,x)dx}$, which is non-negative, according to the non-negativity of $r$ and $n$, and converges to $\bar \psi :=\int_{\R^d}{r(x)d\mu(x)dx}$, by definition of the weak convergence, and since $r\in \mathcal{C}_0(\R^d)$. Let us assume that $\bar \psi>0$. 
Let $\e\in (0, \bar \psi)$. Since $\psi$ converges to $\bar \psi$, and since $\rho$ satisfies the ODE
\begin{align*}
\dot \rho(t)=\psi(t)-\rho(t)^2, 
\end{align*}
 there exists $T_\e>0$ such that for all $t\geq T_\e$, 
\[\bar \psi -\e -\rho(t)^2\leq \dot \rho(t)\leq \bar \psi +\e -\rho(t)^2.\]
In other words, 
$\rho$ is a super-solution of $\dot u = \bar \psi -\e -u^2$, and a sub-solution of $\dot u = \bar \psi +\e -u^2$. Since the solutions of these equations converge to $\sqrt{\bar \psi-\e}$ and $\sqrt{\bar \psi+\e}$ respectively, 
\[\underset{t\to +\infty}{\liminf}\; \rho(t) \geq \sqrt{\bar \psi -\e} \quad \text{and} \quad \underset{t\to +\infty}{\limsup}\; \rho(t) \leq \sqrt{\bar \psi +\e}. \]
Since these inequalities hold for any $\e\in (0, \bar \psi)$, it proves that $\rho$ indeed converges to $ \sqrt{\bar \psi}$. 

If $\bar \psi=0$, we prove that  $\limsup  \rho \leq 0$ with the same method, and the non-negativity of $\psi$ ensures that $\liminf \rho \geq 0$. 

Let $\varphi \in \mathcal{C}^1_c\left(\R^d\right)$, and let us denote $\bar \rho:=\underset{t\to +\infty}{\lim}\rho(t)$.
We recall that if a differentiable function converges, then its derivative is either divergent or converges to $0$. Hence, since $t\mapsto \int_{\R^d}{\varphi(x)n(t,x)dx}$ converges (by hypothesis), and 
\begin{align*}
\frac{d}{dt}\int_{\R^d}{\varphi(x)n(t,x)dx}&=-\int_{\R^d}{\nabla \cdot \left( f(x)n(t,x) \right) \varphi(x)dx}\int_{\R^d}{(r(x)-\rho(t))\varphi(x)n(t,x)dx}\\
&=+\int_{\R^d}{f(x).\nabla \varphi(x) n(t,x)dx}+\int_{\R^d}{\left(r(x)- \rho(t)\right)\varphi(x)n(t,x)dx}\\
&\underset{t\to +\infty}{\longrightarrow}\int_{\R^d}{f(x).\nabla \varphi(x)+(r(x)-\bar \rho) \varphi(x) d\mu(x)}, 
\end{align*}
the equality
\begin{align}
\int_{\R^d}{f(x).\nabla \varphi(x)+(r(x)-\bar \rho) \varphi(x) d\mu(x)}=0
\label{weak rho}
\end{align}
holds for any $\varphi\in \mathcal{C}^1_c(\R^d)$. 

It remains to prove that $\mu(\R^d)=\bar \rho$. If $\bar \rho =0$, then the non-negativity of $n$ and the definition of $\rho$ lead to $\mu=0$. 
Let us now assume that $\bar \rho>0$, and let $\e>0$. Since $\mu$ is a finite measure, $r\in \mathcal{C}^0(\R^d)$, and owing to the definition of $\psi$ and $\bar \rho$, there exists $K\subset \R^d$ a compact set such that
\begin{itemize}
\item  $\mu(K)\geq \mu(\R^d)-\varepsilon$ 
\item $\int_K{r(x)d\mu(x)dx}\geq \int_{\R^d}{r(x)d\mu(x)dx}-\e= \bar{\rho}^2-\e$. 
\end{itemize}
Let $\varphi_K\in \mathcal{C}^1_c(\R^d)$ such that $\varphi_K\equiv 1$ on $K$, $0\leq \varphi\leq 1$ on $\R^d$.
Since $\nabla \varphi_K \equiv 0$ on $K$,  $$\bigg\lvert \int_{\R^d}{f(x).\nabla \varphi_K(x) d\mu (x)} \bigg\rvert \leq \lVert f. \nabla \varphi_K \rVert_{\infty} \mu(\R^d\backslash K)\leq \e  \lVert f. \nabla \varphi_K \rVert_{\infty}. $$
Moreover, according to the choice of $\varphi_K$
  $$\int_{\R^d}{r(x)\varphi_K(x)d\mu(x)}\in [\bar{\rho}^2-\e, \bar{\rho}^2],\quad 
\text{and} 
 \quad \int_{\R^d}{\varphi_K(x)d\mu(x)}\in [\mu(\R^d)-\e, \mu(\R^d)].$$
Hence, injecting these inequalities in \eqref{weak rho}, we obtain
\[-C\e\leq \bar \rho(\bar \rho-\mu(\R^d))\leq C\e\]
for some $C\geq 0$. Since this equality holds for any $\e$, and $\bar \rho$ is positive, it proves that $\mu(\R^d)=\bar \rho$. 
\end{proof}
Weak stationary solutions which are smooth enough (at least in $\mathcal{C}^1(\R^d)$) are in  fact stationary solutions in the strong sense, as defined in the following lemma, and can be further characterised. 
\begin{lem}
Let $\bn \in \mathcal{C}^1(\R^d)$. Then, $\bn$ is a weak stationary solution of \eqref{eq intro} if and only if for all $t\geq 0$, $y\in \R^d$, 
\begin{align*}
\begin{cases}
  \bn(X(t,y))=e^{\int_0^t{\tr(X(s,y))-\bar \rho \; ds}} \; \bn(y)\\
\int_{\R^d}{\bar n (x)dx}=\bar \rho
\end{cases}. 
\end{align*} \label{criterion stationary sol }
\end{lem}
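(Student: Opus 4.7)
The plan is to go through the strong form of the stationary equation as an intermediate step, and then identify that form with the ODE satisfied along characteristics.

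For the forward implication, assume $\bn \in \mathcal{C}^1(\R^d)$ is a weak stationary solution in the sense of \eqref{weak solution}, with $d\mu(x)=\bn(x)dx$ and $\bar\rho=\int_{\R^d}\bn(x)dx$. Since $f\in\mathcal{C}^2(\R^d)$, $\bn\in\mathcal{C}^1(\R^d)$ and $\varphi\in\mathcal{C}^1_c(\R^d)$, I can integrate by parts in the first term and rewrite \eqref{weak solution} as
\[
\int_{\R^d}\varphi(x)\big(-\nabla\cdot(f(x)\bn(x))+(r(x)-\bar\rho)\bn(x)\big)dx=0 \qquad \forall\varphi\in\mathcal{C}^1_c(\R^d).
\]
By the fundamental lemma of the calculus of variations, this forces the pointwise identity $\nabla\cdot(f\bn)=(r-\bar\rho)\bn$ on $\R^d$. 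Expanding the divergence as $\nabla\cdot(f\bn)=\bn\,\dive f+f\cdot\nabla\bn$ and using $\tr:=r-\dive f$, this becomes the transport identity
\[
f(x)\cdot\nabla\bn(x)=(\tr(x)-\bar\rho)\,\bn(x) \qquad \forall x\in\R^d.
\]

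Next I evaluate this identity along the characteristic $t\mapsto X(t,y)$ defined in \eqref{X(t,y)}. Differentiating and using $\dot X(t,y)=f(X(t,y))$, I obtain the scalar linear ODE
\[
\frac{d}{dt}\bn(X(t,y))=f(X(t,y))\cdot\nabla\bn(X(t,y))=(\tr(X(t,y))-\bar\rho)\,\bn(X(t,y)),
\]
with initial condition $\bn(X(0,y))=\bn(y)$. Integrating explicitly yields the required formula
\[
\bn(X(t,y))=\bn(y)\,\exp\!\left(\int_0^t\big(\tr(X(s,y))-\bar\rho\big)ds\right),
\]
while the mass equality $\int_{\R^d}\bn(x)dx=\bar\rho$ is simply the definition of $\bar\rho$ as the total mass of the measure $\bn\,dx$.

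For the converse, I assume $\bn\in\mathcal{C}^1(\R^d)\cap L^1(\R^d)$ satisfies the characteristic identity and that $\int_{\R^d}\bn(x)dx=\bar\rho$. Differentiating the characteristic identity with respect to $t$ at $t=0$ (for arbitrary $y\in\R^d$) gives back the pointwise equation $f\cdot\nabla\bn=(\tr-\bar\rho)\bn$, i.e.\ $\nabla\cdot(f\bn)=(r-\bar\rho)\bn$ on $\R^d$. Multiplying by any $\varphi\in\mathcal{C}^1_c(\R^d)$, integrating over $\R^d$ and integrating by parts in the divergence term (the boundary contribution vanishes since $\varphi$ has compact support) recovers \eqref{weak solution}, so $\bn$ is a weak stationary solution.

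Essentially no step is delicate: the only substantive ingredients are the integration by parts, which is justified by the regularity of $f$, $\bn$ and $\varphi$, and the fact, already recorded in Section \ref{section characteristic curves}, that the characteristic flow $X(t,y)$ is globally defined and smooth in $y$ under the standing Lipschitz and $\mathcal{C}^2$ assumptions on $f$. The slightly subtle point, if any, is to remember that the definition of a weak stationary solution fixes $\bar\rho$ as the \emph{total mass} of $\mu$, so the second line of the system simply encodes this definition in the case $d\mu=\bn\,dx$ and does not need a separate proof.
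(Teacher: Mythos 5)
Your proof is correct and follows essentially the same route as the paper's: integrate by parts to reduce the weak formulation to the strong stationary equation $\nabla\cdot(f\bn)=(r-\bar\rho)\bn$, then recognise that equation as the statement that $t\mapsto \bn(X(t,y))e^{-\int_0^t(\tr(X(s,y))-\bar\rho)\,ds}$ is constant along characteristics. The paper merely packages your two directions into a single computation of that derivative, so there is no substantive difference.
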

\begin{proof}
First, let us note that, since $\bn \in \mathcal{C}^1(\R^d)$, one can integrate by parts in the expression \eqref{weak solution} in order to prove that $\bn$ is a weak stationary solution if and only if for any $\varphi\in \mathcal{C}^1_c(\R^d)$, 
\[\int_{\R^d}{\left( -\nabla \cdot \left(f(x) \bn(x)\right)+\left(r(x)-\bar \rho \right)\bn(x) \right)\varphi(x)dx}=0,\]
with $\bar \rho=\int_{\R^d}{\bn(x)dx}$, which means that $\bn$ is a weak stationary solution if and only if it is a stationary solution in the strong sense, \textit{i.e}
\[ -\nabla \cdot \left(f(x) \bn(x)\right)+\left(r(x)-\bar \rho \right)\bn(x)=0 \quad \text{ for all } x \in \R^d.\]
The result follows, since for any $y\in \R^d$ 
\begin{align*}
 \frac{d}{dt}\left(\bn(X(t,y))e^{-\int_0^t{\tr(X(s,y))-\bar \rho \; ds}}\right)&=\bigg(f(X(t,y)).\nabla \bn(X(t,y)) -\left( \tr(X(t,y)- \bar \rho \right) \bn(X(t,y))\bigg) e^{-\int_0^t{\tr(X(s,y))-\bar \rho \; ds}}\\
  =-\bigg(-\nabla \cdot &\left(f(X(t,y)) \bn(X(t,y))\right)+(r(X(t,y))-\bar \rho )\bn(X(t,y))\bigg)e^{-\int_0^t{\tr(X(s,y))-\bar \rho \; ds}}=0.
\end{align*}
\end{proof}
Lemma \ref{criterion stationary sol } allows us to conclude that in the case where the ODE $\dot u=f(u)$ has a repellor with a bounded unstable set, there exists a smooth stationary solution for~\eqref{eq intro}. 
\begin{cor}
Let $x_u\in \R^d$ be a repellor point for the ODE $\dot x =f(x)$, and let us assume that  
\[\bn(x):= \frac{\tr(x_u)}{\alpha} e^{\int_0^{+\infty}{\tr(Y(s,x))-\tr(x_u)ds}}\ind_{B}(x) \]
is well-defined, and that $\bar n \in \mathcal{C}^1(B)\cap L^1(B)$, where $B=\{x\in \R^d: Y(t,x)\ul x_u\}$ is the unstable set of~$x_u$.
Then, $\bar{n}$ is a $\mathcal{C}^1$ stationary solution. 
\end{cor}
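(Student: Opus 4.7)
The plan is to apply the preceding lemma, which characterises $\mathcal{C}^1$ stationary solutions as those satisfying a multiplicative identity along characteristics together with a normalisation condition. With the candidate $\bar{\rho} := \tr(x_u)$, the normalisation is immediate from the choice of $\alpha$: setting $\alpha := \int_B e^{\int_0^{+\infty}(\tr(Y(s,x)) - \tr(x_u))ds}dx$ (finite by the $L^1$ hypothesis on $\bar{n}$) gives $\int_{\R^d}\bar{n}(x)dx = \tr(x_u)$.

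The heart of the proof is verifying the characteristic identity
\[\bar{n}(X(t,y)) = e^{\int_0^t(\tr(X(s,y)) - \bar{\rho})ds}\,\bar{n}(y), \qquad y\in\R^d,\ t\geq 0.\]
First, I would observe that $B$ is forward-invariant: since $Y(s,X(t,y)) = X(t-s,y)$ for $s\leq t$ (by \eqref{X(s,Y(t,x))}) and $Y(s,X(t,y)) = Y(s-t,y)$ for $s\geq t$ (by the semigroup property), the limit $Y(s,X(t,y))\to x_u$ as $s\to+\infty$ holds if and only if $Y(s,y)\to x_u$. Hence for $y \notin B$ both sides of the identity vanish.

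For $y\in B$, the idea is to plug the definition of $\bar{n}$ into both sides and split the improper integral at $s=t$. Using the two flow identities above and the substitutions $u = t-s$ and $u = s-t$, one obtains
\begin{align*}
\int_0^{+\infty}\!\!(\tr(Y(s,X(t,y))) - \tr(x_u))\,ds
&= \int_0^t(\tr(X(u,y)) - \tr(x_u))\,du \\
&\quad + \int_0^{+\infty}\!\!(\tr(Y(u,y)) - \tr(x_u))\,du,
\end{align*}
which exponentiates to the desired identity. The preceding lemma then yields that $\bar{n}$ is a stationary solution in the strong sense on $B$.

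The main delicate point is not the algebra above (routine once the flow identities are in place) but rather the boundary behaviour of $\bar{n}$. Strictly speaking, the cited lemma is stated for functions in $\mathcal{C}^1(\R^d)$, whereas our $\bar{n}$ is $\mathcal{C}^1$ only on the open set $B$ and extended by zero outside. Translating the strong stationary equation on $B$ into the \emph{weak} stationary formulation on $\R^d$ therefore requires checking that no boundary contributions arise when integrating against a test function $\varphi\in\mathcal{C}^1_c(\R^d)$. For a repellor the unstable set $B$ is open and of full local dimension near $x_u$, and the integrability hypothesis $\bar{n}\in L^1(B)$ will be used to justify that the integration by parts produces no concentration on $\partial B$; this is where one must be most careful.
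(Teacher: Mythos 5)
Your proof is correct and follows essentially the same route as the paper's: both verify the multiplicative identity of the preceding lemma with $\bar \rho = \tr(x_u)$ by substituting the definition of $\bn$, using the flow identities $Y(s,X(t,y))=X(t-s,y)$, and performing the change of variables (your splitting of the improper integral at $s=t$ is just a reorganisation of the paper's single change of variable $s'=t-s$). The regularity caveat you raise at the end --- that the lemma is stated for $\mathcal{C}^1(\R^d)$ while $\bn$ is only $\mathcal{C}^1$ on $B$ and extended by zero --- is a genuine subtlety that the paper's own proof passes over in silence, so flagging it is a refinement rather than a deviation.
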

\begin{proof}
%First, let us note that $\bn$ is well-defined and $\mathcal{C}^1$, since for any compact set $K\subset B$, there exist $C, \delta>0$ such that $\lVert \tr(Y(t,x))-\tr(x_u) \rVert\leq Ce^{-\delta t}$, for all $t\geq 0$. 
For all $y\in \R$, $t\geq 0$, 
\[\bar n(X(t,y))=\frac{\tr(x_u)}{\alpha} e^{\int_0^{+\infty}{\tr(X(t-s, x))-\tr(x_u)ds}}= \frac{\tr(x_u)}{\alpha} e^{\int_{-\infty}^{t}{\tr(X(s, y))-\tr(x_u)ds}},  \]
with the change of variable $s'=t-s$
and 
\[\bar n(y)=\frac{\tr(x_u)}{\alpha}e^{\int_{- \infty}^0{\tr(X(s,y))}-\tr(x_u)ds}, \]
with the change of variable $s'=-s$. 
Thus, the equality of Lemma \ref{criterion stationary sol } holds, which concludes the proof. 
\end{proof}

\bibliographystyle{plain}
\bibliography{biblio_sel_adv}

\end{document}